\let\pa\partial
\newtheorem{theorem}{Theorem}
\newtheorem{lemma}[theorem]{Lemma}
\newtheorem{proposition}[theorem]{Proposition}
\let\ga=\gamma
\let\de=\delta
\let\pa=\partial
\let\om=\omega
\begin{document}
\title[Boltzmann equation]{Quantitative Pointwise Estimate of the Solution
of the Linearized Boltzmann Equation}
\author{Yu-Chu Lin}
\address{Yu-Chu Lin, Department of Mathematics, National Cheng Kung
University, Tainan, Taiwan}
\email{yuchu@mail.ncku.edu.tw }
\author{Haitao Wang}
\address{Haitao Wang, Institute of Natural Sciences and School of
Mathematical Sciences, Shanghai Jiao Tong University, Shanghai, China}
\email{haitaowang.math@gmail.com}
\author{Kung-Chien Wu}
\address{Kung-Chien Wu, Department of Mathematics, National Cheng Kung
University, Tainan, Taiwan and National Center for Theoretical Sciences,
National Taiwan University, Taipei, Taiwan}
\email{kungchienwu@gmail.com}
\thanks{The first author is supported by the Ministry of Science and
Technology under the grant MOST 105-2115-M-006-002-. The third author is
supported by the Ministry of Science and Technology under the grant
104-2628-M-006-003-MY4 and National Center for Theoretical Sciences.}

\begin{abstract}
We study the quantitative pointwise behavior of the solutions of the
linearized Boltzmann equation for hard potentials, Maxwellian molecules and
soft potentials, with Grad's angular cutoff assumption. More precisely, for
solutions inside the finite Mach number region, we obtain the pointwise
fluid structure for hard potentials and Maxwellian molecules, and optimal
time decay in the fluid part and sub-exponential time decay in the non-fluid
part for soft potentials. For solutions outside the finite Mach number
region, we obtain sub-exponential decay in the space variable. The singular
wave estimate, regularization estimate and refined weighted energy estimate
play important roles in this paper. Our results largely extend the classical
results of Liu-Yu \cite{[LiuYu], [LiuYu2], [LiuYu1]} and Lee-Liu-Yu \cite%
{[LeeLiuYu]} to hard and soft potentials by imposing suitable exponential
velocity weight on the initial condition.
\end{abstract}

\keywords{Boltzmann equation, fluid-like wave, kinetic-like wave, Maxwellian
states, Mixture Lemma, singular wave, pointwise estimate.}
\subjclass[2000]{35Q20; 82C40.}
\maketitle





\section{Introduction}

\subsection{The models}

In this paper, we consider the following Boltzmann equation:
\begin{equation}
\left\{
\begin{array}{l}
\displaystyle\pa_{t}F+\xi \cdot \nabla _{x}F=Q(F,F)\,, \\[4mm]
\displaystyle F(0,x,\xi )=F_{0}(x,\xi )\,,%
\end{array}%
\right.  \label{bot.1.a}
\end{equation}%
where $F(t,x,\xi )$ is the distribution function for the particles at time $%
t>0$, position $x=(x_{1},x_{2},x_{3})\in {\mathbb{R}}^{3}$ and microscopic
velocity $\xi =(\xi _{1},\xi _{2},\xi _{3})\in {\mathbb{R}}^{3}$. The
left-hand side of this equation models the transport of particles and the
operator on the right-hand side models the effect of collisions on the
transport
\begin{equation*}
Q(F,G)=\int_{{\mathbb{R}}^{3}\times S^{2}}|\xi -\xi _{\ast }|^{\gamma
}B(\vartheta )\left\{ F(\xi _{\ast }^{\prime })G(\xi ^{\prime })-F(\xi
_{\ast })G(\xi )\right\} d\xi _{\ast }d\om\,.
\end{equation*}%
In this paper, we consider the Maxwellian molecules ($\gamma =0$), hard
potentials ($0<\gamma <1$) and soft potentials ($-2<\gamma <0$); and $%
B(\vartheta )$ satisfies the Grad cutoff assumption
\begin{equation*}
0<B(\vartheta )\leq C|\cos \vartheta |\,,
\end{equation*}%
for some constant $C>0$. Moreover, the post-collisional velocities satisfy
\begin{equation*}
\xi ^{\prime }=\xi -[(\xi -\xi _{\ast })\cdot \om]\om\,,\quad \xi _{\ast
}^{\prime }=\xi +[(\xi -\xi _{\ast })\cdot \om]\om\,,
\end{equation*}%
and $\vartheta $ is defined by
\begin{equation*}
\cos \vartheta =\frac{|(\xi -\xi _{\ast })\cdot \om|}{|\xi -\xi _{\ast }|}\,.
\end{equation*}%
It is well known that the Maxwellians are steady state solutions to the
Boltzmann equation. Thus, it is natural to linearize the Boltzmann equation (%
\ref{bot.1.a}) around a global Maxwellian
\begin{equation*}
\mathcal{M}(\xi )=\frac{1}{(2\pi )^{3/2}}\exp \Big(\frac{-|\xi |^{2}}{2}\Big)%
\,,
\end{equation*}%
with the standard perturbation $f(t,x,\xi )$ to $\mathcal{M}$ as
\begin{equation*}
F=\mathcal{M}+\mathcal{M}^{1/2}f\,.
\end{equation*}%
After substituting into (\ref{bot.1.a}) and dropping the nonlinear term, we
then have the linearized Boltzmann equation
\begin{equation}
\left\{
\begin{array}{l}
\displaystyle\pa_{t}f+\xi \cdot \nabla _{x}f=\mathcal{M}^{-1/2}\left[ Q(%
\mathcal{M},\mathcal{M}^{1/2}f)+Q(\mathcal{M}^{1/2}f,\mathcal{M})\right]
=Lf\,, \\[4mm]
\displaystyle f(0,x,\xi )=f_{0}(x,\xi )\,.%
\end{array}%
\right.  \label{bot.1.d}
\end{equation}

It is well-known that the null space of $L$ is a five-dimensional vector
space with the orthonormal basis $\{\chi _{i}\}_{i=0}^{4}$, where
\begin{equation*}
Ker(L)=\left\{ \chi _{0},\chi _{i},\chi _{4}\right\} =\left\{ \mathcal{M}%
^{1/2},\ \xi _{i}\mathcal{M}^{1/2},\ \frac{1}{\sqrt{6}}(|\xi |^{2}-3)%
\mathcal{M}^{1/2}\right\} \,,\quad i=1,\ 2,\ 3\,.
\end{equation*}%
Based on this property, we can introduce the Macro-Micro decomposition: let $%
\mathrm{P}_{0}$ be the orthogonal projection with respect to the $L_{\xi
}^{2}$ inner product onto $\mathrm{Ker}(L)$, and $\mathrm{P}_{1}\equiv
\mathrm{Id}-\mathrm{P}_{0}$.

\subsection{Main results}

Before the presentation of the main theorem, let us define some notations in
this paper. We denote $\left\langle \xi \right\rangle ^{s}=(1+|\xi
|^{2})^{s/2}$, $s\in {\mathbb{R}}$. For the microscopic variable $\xi $, we
denote
\begin{equation*}
|g|_{L_{\xi }^{2}}=\Big(\int_{{\mathbb{R}}^{3}}|g|^{2}d\xi \Big)%
^{1/2}\,,\quad |g|_{L_{\xi }^{\infty }}=\sup_{\xi \in {\mathbb{R}}%
^{3}}|g(\xi )|\,,
\end{equation*}%
and the weighted norms can be defined by
\begin{equation*}
|g|_{L_{\xi }^{2}(m)}=\Big(\int_{{\mathbb{R}}^{3}}|g|^{2}md\xi \Big)%
^{1/2}\,,\quad |g|_{L_{\xi }^{\infty }(m)}=\sup_{\xi \in {\mathbb{R}}%
^{3}}\left\{ |g(\xi )|m\right\} \,.
\end{equation*}%
The $L_{\xi }^{2}$ inner product in ${\mathbb{R}}^{3}$ will be denoted by $%
\big<\cdot ,\cdot \big>_{\xi }$,
\begin{equation*}
\left\langle f,g\right\rangle _{\xi }=\int f(\xi )\overline{g(\xi )}d\xi .
\end{equation*}%
For the space variable $x$, we have similar notations. In fact,
\begin{equation*}
|g|_{L_{x}^{2}}=\Big(\int_{{\mathbb{R}^{3}}}|g|^{2}dx\Big)^{1/2}\,,\quad
|g|_{L_{x}^{\infty }}=\sup_{x\in {\mathbb{R}^{3}}}|g(x)|\,.
\end{equation*}%
The standard vector product will be denoted by $(a,b)$ or $a\cdot b$ for any
vectors $a,b\in {\mathbb{R}}^{3}$. For the Boltzmann equation, the natural
norm in $\xi $ is $|\cdot |_{L_{\sigma }^{2}}$, which is defined by
\begin{equation*}
|g|_{L_{\sigma }^{2}}^{2}=|\left\langle \xi \right\rangle ^{\frac{\ga}{2}%
}g|_{L_{\xi }^{2}}^{2}\,.
\end{equation*}%
Moreover, we define
\begin{equation*}
\Vert g\Vert _{L^{2}}^{2}=\int_{{\mathbb{R}^{3}}}|g|_{L_{\xi
}^{2}}^{2}dx\,,\quad \Vert g\Vert _{L^{2}(m)}^{2}=\int_{{\mathbb{R}^{3}}%
}|g|_{L_{\xi }^{2}(m)}^{2}dx\,,
\end{equation*}%
and
\begin{equation*}
\Vert g\Vert _{L_{x}^{\infty }L_{\xi }^{\infty }(m)}=\sup_{(x,\xi )\in {%
\mathbb{R}^{6}}}\left\{ |g(x,\xi )|m\right\} \,,\quad \Vert g\Vert
_{L_{x}^{1}L_{\xi }^{2}(m)}=\int_{{\mathbb{R}^{3}}}|g|_{L_{\xi }^{2}(m)}dx\,.
\end{equation*}%
Finally, we define the high order Sobolev norms: let $s_{1},s_{2}\in {%
\mathbb{N}}$ and let $\alpha _{1},\alpha _{2}$ be any multi-indexes with $%
|\alpha _{1}|\leq s_{1}$ and $|\alpha _{2}|\leq s_{2}$,
\begin{equation*}
\left\Vert g\right\Vert _{H_{x}^{s_{1}}L_{\xi }^{2}(m)}=\sum_{|\alpha
_{1}|\leq s_{1}}\left\Vert \pa_{x}^{\alpha _{1}}g\right\Vert
_{L^{2}(m)}\,,\quad \left\Vert g\right\Vert _{L_{x}^{2}H_{\xi
}^{s_{2}}(m)}=\sum_{|\alpha _{2}|\leq s_{2}}\| \pa_{\xi }^{\alpha
_{2}}g\|_{L^{2}(m)}\,.
\end{equation*}

The domain decomposition plays an important role in our analysis, hence we
need to define a cut-off function $\chi:{\mathbb{R}}\rightarrow{\mathbb{R}}$%
, which is a smooth non-increasing function, $\chi(s)=1$ for $s\leq1$, $%
\chi(s)=0$ for $s\geq2$ and $0\leq\chi\leq1$. Moreover, we define $\chi
_{R}(s)=\chi(s/R)$.

For simplicity of notations, hereafter, we abbreviate \textquotedblleft {\ $%
\leq C$} \textquotedblright\ to \textquotedblleft {\ $\lesssim $ }%
\textquotedblright , where $C$ is a positive constant depending only on
fixed numbers.

The precise description of our main result is as follows.

\begin{theorem}
\label{thm:main} Let $f$ be a solution to \eqref{bot.1.d} with initial data $%
f_{0}$ compactly supported in $x$-variable and bounded in the weighted $\xi $%
-space
\begin{equation*}
f_{0}(x,\xi )\equiv 0,\;\text{ for }\left\vert x\right\vert \geq 1.
\end{equation*}%
There exists a positive constant $M$ such that the following hold:

\begin{enumerate}
\item As $0\leq \gamma <1$, for any given positive integer $N$, any given $%
0<p\leq 2$ and any sufficiently small $\alpha ,\ \epsilon >0$, there exist
positive constants $C$, $C_{N}$, $c_{0}$ and $c_{\epsilon }$ such that $f$
satisfies

\begin{enumerate}
\item For $\left\langle x\right\rangle \leq 2Mt$,
\begin{equation*}
\left\vert f(t,x,\cdot )\right\vert _{L_{\xi }^{2}}\leq C_{N}\left[
\begin{array}{l}
\left( 1+t\right) ^{-2}\left( 1+\frac{\left( \left\vert x\right\vert -%
\mathbf{v}t\right) ^{2}}{1+t}\right) ^{-N}+\left( 1+t\right) ^{-3/2}\left( 1+%
\frac{\left\vert x\right\vert ^{2}}{1+t}\right) ^{-N} \\[2mm]
+\mathbf{1}_{\{\left\vert x\right\vert \leq \mathbf{v}t\}}\left( 1+t\right)
^{-3/2}\left( 1+\frac{\left\vert x\right\vert ^{2}}{1+t}\right) ^{-3/2} \\%
[2mm]
+e^{-c_{0}\left( t+\alpha ^{\frac{1-\gamma }{p+1-\gamma }}\left\vert
x\right\vert ^{\frac{p}{p+1-\gamma }}\right) }+e^{-t/C}%
\end{array}%
\right] |||f_{0}|||\,.
\end{equation*}

\item For $\left\langle x\right\rangle \geq 2Mt$,
\begin{equation*}
\left\vert f(t,x,\cdot )\right\vert _{L_{\xi }^{2}}\leq C\left(
e^{-c_{0}\left( t+\alpha ^{\frac{1-\gamma }{p+1-\gamma }}\left\vert
x\right\vert ^{\frac{p}{p+1-\gamma }}\right) }+t^{5}e^{-c_{\epsilon
}(\left\langle x\right\rangle +t)^{\frac{p}{p+1-\gamma }}}\right)
|||f_{0}|||\,.
\end{equation*}
\end{enumerate}

\item As $-2<\gamma <0$, for any given $0<p\leq 2$ and any sufficiently
small $\alpha ,\ \epsilon >0$, there exist positive constants $C$, $c$, $%
c_{0}$ and $c_{\epsilon }$ such that $f$ satisfies

\begin{enumerate}
\item For $\left\langle x\right\rangle \leq 2Mt$,
\begin{equation*}
\left\vert f(t,x,\cdot )\right\vert _{L_{\xi }^{2}}\leq C\left[
\begin{array}{l}
\left( 1+t\right) ^{-3/2}+e^{-c\alpha ^{\frac{-\gamma }{p-\gamma }}t^{\frac{p%
}{p-\gamma }}} \\[2mm]
+e^{-c_{0}\left( \alpha ^{\frac{-\gamma }{p-\gamma }}t^{\frac{p}{p-\gamma }%
}+\alpha ^{\frac{1-\gamma }{p+1-\gamma }}\left\vert x\right\vert ^{\frac{p}{%
p+1-\gamma }}\right) }%
\end{array}%
\right] |||f_{0}|||\,.
\end{equation*}

\item For $\left\langle x\right\rangle \geq 2Mt$,
\begin{equation*}
\left\vert f(t,x,\cdot )\right\vert _{L_{\xi }^{2}}\leq C\left[
\begin{array}{c}
e^{-c_{0}\left( \alpha ^{\frac{-\gamma }{p-\gamma }}t^{\frac{p}{p-\gamma }%
}+\alpha ^{\frac{1-\gamma }{p+1-\gamma }}\left\vert x\right\vert ^{\frac{p}{%
p+1-\gamma }}\right) } \\
+t^{5}e^{-c_{\epsilon }(\left\langle x\right\rangle +t)^{\frac{p}{p+1-\gamma
}}}%
\end{array}%
\right] |||f_{0}|||\,.
\end{equation*}
\end{enumerate}
\end{enumerate}

Here $\mathbf{1}_{\{\cdot \}}$ is the indicator function and%
\begin{equation*}
|||f_{0}|||\equiv \max \left\{ \left\Vert f_{0}\right\Vert _{L^{2}\left(
e^{7\alpha \left\vert \xi \right\vert ^{p}}\right) },\left\Vert
f_{0}\right\Vert _{L_{x}^{1}L_{\xi }^{2}},\left\Vert f_{0}\right\Vert
_{L^{2}\left( e^{\epsilon \left\vert \xi \right\vert ^{p}}\right)
},\left\Vert f_{0}\right\Vert _{L_{x}^{\infty }L_{\xi }^{\infty }\left(
e^{8\alpha \left\vert \xi \right\vert ^{p}}\right) }\right\} .
\end{equation*}
The constant $\mathbf{v}=\sqrt{5/3}$ is the sound speed associated with the
normalized global Maxwellian.
\end{theorem}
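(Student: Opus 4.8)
The plan is to represent the solution by a finite Picard iteration that strips off a few explicitly controllable ``particle-like'' (singular) waves and leaves a ``fluid-like'' remainder which, once it has been regularized in $x$, can be analyzed by a sharp long-wave/short-wave decomposition. Writing $L=-\nu(\xi)+K$, where $\nu(\xi)\sim\langle\xi\rangle^{\gamma}$ is the collision frequency and $K$ is the compact gain operator, I set
\begin{equation*}
f=\sum_{j=0}^{2k-1}f^{(j)}+f_{R},
\end{equation*}
where $f^{(0)}$ solves the damped transport equation $\partial_t f^{(0)}+\xi\cdot\nabla_x f^{(0)}+\nu f^{(0)}=0$ with datum $f_0$, each $f^{(j)}$ $(1\le j\le 2k-1)$ solves the same equation with source $Kf^{(j-1)}$, and $f_R$ solves \eqref{bot.1.d} with source $Kf^{(2k-1)}$ and zero datum. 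The integer $k$ is taken large (depending on $N$): by the \emph{Mixture Lemma}, each pair of applications of $K$ interleaved with the transport flow trades one $\xi$-derivative for one $x$-derivative, so $f^{(2k-1)}$ and $f_R$ acquire $H^k_x$ regularity, which is precisely what is needed to run the eigenvalue expansion below and, via Sobolev embedding, to pass from $L^2_x$ to pointwise-in-$x$ bounds.

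For the low-order iterates I would establish the \emph{singular wave estimate}. Since $f_0$ is supported in $|x|\le1$ and the collision kernel $k(\xi,\xi_*)$ of $K$ decays in $|\xi-\xi_*|$, the iterate $f^{(j)}$ is, up to sub-exponentially small errors, concentrated near the characteristic region, and along the flow it carries the damping factor $e^{-\nu(\xi)t}$, while the exponential velocity weight contained in $|||f_0|||$ supplies $e^{-8\alpha|\xi|^p}$. To reach a point $x$ at time $t$ with velocity of size $r=|\xi|$ costs, in the exponent, a weight loss $\sim\alpha r^p$ and a damping $\sim\nu(r)\,(|x|/r)\sim r^{\gamma-1}|x|$ along the path; minimizing $\alpha r^p+c\,r^{\gamma-1}|x|$ in $r$ gives $r\sim(\alpha^{-1}|x|)^{1/(p+1-\gamma)}$ and the value $\alpha^{(1-\gamma)/(p+1-\gamma)}|x|^{p/(p+1-\gamma)}$, which is exactly the stretched exponent in the theorem; for $0\le\gamma<1$ one additionally has $\nu\ge\nu_0>0$, hence the extra factor $e^{-c_0 t}$ (and the $e^{-t/C}$ term). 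Running the same argument for $f_R$ outside the sound cone — its source is again concentrated near characteristics — yields the contribution $t^5e^{-c_\epsilon(\langle x\rangle+t)^{p/(p+1-\gamma)}}$, where $t^5$ absorbs the polynomial-in-$t$ losses from summing the $2k$ iterates and from the Duhamel integrations, and $c_\epsilon$ reflects that only the weakest weight $e^{\epsilon|\xi|^p}$ remains available once the others have been consumed.

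For the fluid part I would analyze $f_R$ (and the already-regular $f^{(2k-1)}$) via the Fourier transform in $x$ and the long-wave/short-wave split through a cutoff $\chi_\delta(\eta)$. On $|\eta|\le\delta$ the semigroup $e^{t(-i\xi\cdot\eta+L)}$ has the five-mode spectral expansion with eigenvalues $\pm i\mathbf{v}|\eta|-b_\pm|\eta|^2+O(|\eta|^3)$ (acoustic, sound speed $\mathbf{v}=\sqrt{5/3}$) and $-b_0|\eta|^2+O(|\eta|^3)$ (heat modes); inverting the transform, the heat modes give the diffusive wave, the acoustic modes the Huygens wave concentrated near $|x|=\mathbf{v}t$, and the Riesz-transform-type (non-smooth at $\eta=0$) part of the fluid projectors the secondary wave $\mathbf{1}_{\{|x|\le\mathbf{v}t\}}(1+t)^{-3/2}(1+|x|^2/(1+t))^{-3/2}$; the long-wave cutoff at $|\eta|=\delta$, together with $N$ integrations by parts and the $H^k_x$ regularity from the Mixture Lemma, converts the Gaussian tails into the polynomial factors $(1+(|x|-\mathbf{v}t)^2/(1+t))^{-N}$ and $(1+|x|^2/(1+t))^{-N}$. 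On $|\eta|\ge\delta$: for $0\le\gamma<1$ (and Maxwellian) the spectral gap of $L$ gives decay $e^{-t/C}$; for $-2<\gamma<0$ there is no gap, and one instead runs the \emph{refined weighted energy estimate} with the weight $e^{\alpha|\xi|^p}$, splitting $|\xi|\le\rho$ (where $\nu(\xi)\ge\nu(\rho)\sim\rho^{\gamma}$ produces decay $e^{-c\rho^{\gamma}t}$) from $|\xi|>\rho$ (where $e^{-\alpha|\xi|^p}\le e^{-\alpha\rho^p}$) and optimizing $\rho^{p-\gamma}\sim t/\alpha$, which yields the sub-exponential decay $e^{-c\alpha^{-\gamma/(p-\gamma)}t^{p/(p-\gamma)}}$ appearing in statement (2); combining with the $x$-localization gives the mixed exponent $\alpha^{-\gamma/(p-\gamma)}t^{p/(p-\gamma)}+\alpha^{(1-\gamma)/(p+1-\gamma)}|x|^{p/(p+1-\gamma)}$. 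Finally, outside the Mach region $\langle x\rangle\ge2Mt$ all the fluid waves above are supported, up to tails that are exponentially small in $(\langle x\rangle+t)$, inside $|x|\lesssim\mathbf{v}t<Mt$, so only the singular-wave and weighted-energy contributions survive, which is the form of (1)(b) and (2)(b); the \emph{regularization estimate} is what upgrades the $L^2_xL^2_\xi$ bounds from the spectral analysis to the claimed pointwise-in-$x$, $L^2_\xi$ bounds.

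The main obstacle is the accounting of the exponential velocity weight. One must verify that the total weight spent — to bound $K$ in the iteration, to run the weighted energy estimate, and to pay the kinematic cost of transporting mass to $x$ by time $t$ — is always strictly less than the weight carried by $|||f_0|||$, with a definite surplus left over to close each estimate; this is why the statement records several weights ($e^{7\alpha|\xi|^p}$, $e^{8\alpha|\xi|^p}$, $e^{\epsilon|\xi|^p}$) with specific numerical constants. Secondary difficulties are the absence of a spectral gap for soft potentials (circumvented by the weighted energy estimate rather than by resolvent bounds) and the matching, across the transition region $\langle x\rangle\approx2Mt$, of the fluid estimate with the sub-exponential estimate so that the two agree; both are handled with the cutoff $\chi$ and careful but routine bookkeeping.
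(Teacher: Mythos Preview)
Your overall architecture and the heuristics for the stretched exponents are correct, but there are two genuine gaps.

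First, the number of Picard iterations cannot depend on $N$. In the paper it is fixed at seven, yielding only $H^2_x$ regularity of the remainder --- enough for Sobolev embedding in $\mathbb{R}^3$ --- and the arbitrary-$N$ polynomial decay of the fluid waves comes from Fourier-multiplier analysis of the long-wave projection $f_{L;0}$ of the \emph{full} solution, not from high $H^k_x$ regularity of the remainder. If $k$ grew with $N$, then so would the velocity-weight exponent consumed along the iteration (each pointwise estimate of $h^{(j)}$ eats one factor $e^{\alpha|\xi|^p}$), so the fixed constants $7\alpha$ and $8\alpha$ in $|||f_0|||$ could not be independent of $N$. This is exactly why the paper replaces the classical Mixture Lemma machinery --- which, as noted there, would force an $e^{\infty\cdot\alpha|\xi|^p}$ weight in the present setting --- by the commuting operator $\mathcal{D}_t = t\nabla_x + \nabla_\xi$: it delivers the needed $H^2_x$ regularity of $h^{(6)}$ in the weighted space $L^2(\mu)$ without spending additional velocity weight at each regularization step.

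Second, and more seriously, your treatment of $f_R$ outside the Mach region does not work as described. You propose to run ``the same argument'' as for the singular waves, but $f_R$ solves the full linearized equation $\partial_t f_R + \xi\cdot\nabla_x f_R = Lf_R + Kf^{(2k-1)}$, so its Duhamel representation involves the full Green's function $\mathbb{G}^{t-s}$, not the damped transport $\mathbb{S}^{t-s}$, and the characteristic localization you invoke is lost. The paper's mechanism here --- its main technical contribution --- is a weighted energy estimate on $\mathcal{R}^{(6)}$ with a time-decreasing, $(x,\xi)$-dependent weight $w=\exp(\epsilon\rho(t,x,\xi)/2)$ that interpolates, via a cutoff in $\delta(\langle x\rangle - Mt)\langle\xi\rangle^{\gamma-p-1}$, between the spatial weight $[\delta(\langle x\rangle - Mt)]^{p/(p+1-\gamma)}$ and the velocity weight $\langle\xi\rangle^p$. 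Because this weight depends on $\xi$, it does not commute with $K$; one must therefore analyze $K_\epsilon - K$ (with $K_\epsilon = e^{\epsilon\rho}Ke^{-\epsilon\rho}$) on a three-region phase-space decomposition $H_+ \cup H_0 \cup H_-$, and control of the macroscopic part $\mathrm{P}_0 u$ on $H_+$ relies on the favorable sign of $\partial_t\rho$ coming from the $-Mt$ shift. You mention weighted energy estimates only as a device for short-wave time decay in the soft-potential case, but this \emph{spatial} weighted-energy step, with its specific weight design and the commutator lemma for $K_\epsilon$, is what actually produces the term $t^5 e^{-c_\epsilon(\langle x\rangle + t)^{p/(p+1-\gamma)}}$, and it is not a routine variant of the singular-wave argument.
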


\subsection{Method of proof}

The pointwise behavior of the solutions of the linearized Boltzmann equation
has been investigated in \cite{[LiuYu], [LiuYu2], [LiuYu1]} for the hard
sphere case and \cite{[LeeLiuYu]} for the hard potential case. On the other
hand, we aware that a stronger velocity weight yields not only faster time
decay (see Caflisch \cite{[Caflisch]} and Strain-Guo \cite{[Strain-Guo]}),
but also space decay (see Golse-Poupand \cite{[GolsePoupand]}). In this
regard, we are interested in the pointwise behavior of the solution of the
linearized Boltzmann equation with hard, Maxwellian and soft potentials,
under an exponential velocity weight on the initial condition.

In this paper, the same as in \cite{[LeeLiuYu], [LiuYu], [LiuYu2], [LiuYu1]}%
, we assume the initial condition is compactly supported in the space
variable. This means that we want to understand the detailed propagation of
localized perturbation. Furthermore, we assume an extra exponential velocity
weight ($e^{\alpha |\xi |^{p}}$, $\alpha >0$ small and $0<p\leq 2$) on the
initial data. Under this assumption, we get an accurate relationship between
decay rates and weight functions.

The main idea of this paper is to combine the long wave-short wave
decomposition, the wave-remainder decomposition, the weighted energy
estimate and the regularization estimate together to analyze the solution.
The long-short wave decomposition, which is based on the Fourier transform,
gives the fluid structure or time decay estimate of the solution. The
wave-remainder decomposition is for extracting the singular waves. The
weighted energy estimate is for the pointwise estimate of the remainder
term, in which the regularization estimate is used. We explain the idea in
more details as below.

Inside the finite Mach number region, the solution is dominated by the long
wave part. In order to obtain its decay rate, we devise different methods
for $0\leq \gamma <1$ and $-2<\gamma <0$ respectively. For $0\leq \gamma <1$%
, taking advantage of the spectrum information of the Boltzmann collision
operator \cite{[Ellis]}, the Fourier multiplier techniques can be applied to
obtain the pointwise structure of the fluid part. However, for $-2<\gamma <0$%
, the spectrum information is missing due to the weak damping for large
velocity. Instead, we use similar arguments as those in the papers by
Kawashima \cite{[Kawashima]}, Strain \cite{[Strain]} and Strain-Guo \cite%
{[Strain-Guo]} to get optimal decay in time. It is shown that the $L^{2}$
norm of the short wave exponentially decays in time for $0\leq \gamma <1$
essentially due to the spectrum gap, while it decays only sub-exponentially
for $-2<\gamma <0$ if imposing an exponential velocity weight on the initial
data.

As mentioned before, we use the wave-remainder decomposition to extract the
singular waves in the short wave. This decomposition is based on a
Picard-type iteration. Such an iteration is manipulated to construct the
increasingly regular particlelike waves; in other words, the first several
terms in the iteration (indeed, the first seven terms of the iteration)
contain the most singular part of the solution, the so-called wave part. In
virtue of the pointwise estimate of the damped transport equation, we have a
rather accurate pointwise estimate for the wave part. On the other hand, the
regularization estimate enables us to show the remainder becomes regular,
and together with the $L^{2}$ decay of the short wave yields the $L^{\infty
} $ decay of the short wave. Combining this with the long wave, we establish
the pointwise structure inside the finite Mach number region.

As for the structure outside the finite Mach number region, it remains to
estimate the remainder part since we already have gained an explicit
estimate for the wave part. The weighted energy estimate plays an essential
role here. The weight functions not only are chosen delicately for different
$\gamma $\ and $p$, but also takes the domain decomposition into account. It
is noted that the sufficient understanding of the structure of the wave
part, which has been obtained previously, is needed in the estimate. And the
regularization estimate makes it possible to do the higher order weighted
energy estimate. Then the desired pointwise estimate follows from the
Sobolev inequality.

\subsection{Significant points of the paper}

We point out some significant points of this paper as follows:

\begin{itemize}
\item Singular waves: The wave-remainder decomposition plays an important
role in this paper. The significant points of the remainder part will be
discussed later. To comprehend the wave part (singular waves), we have to
establish a quantitative estimate of the damped transport operator $\mathbb{S%
}^{t}$ (see Lemma \ref{S^t-Sup}) first, since the singular waves can be
represented by the combination of operators $\mathbb{S}^{t}$ and $K$. If we
do not assume any velocity weight on the initial data, for the hard sphere
case (see \cite{[LiuYu], [LiuYu2], [LiuYu1]}), one can estimate the space
and time decay of the wave part precisely. However, for the hard potential
case (see \cite{[LeeLiuYu]}), the behavior seems not clear. In this regard,
we reinvestigate the hard potential case, as well as Maxwellian molecules
and soft potentials, assuming the initial condition is compactly supported
in $x$ and has a $L_{\xi }^{\infty }(e^{\alpha |\xi |^{p}})$ bound. Under
this assumption, we get exponential time decay for $0\leq \gamma <1$ and
sub-exponential time decay for $-2<\gamma <0$. Simultaneously, we get
sub-exponential space decay for $-2<\gamma <1$. This wave structure reveals
accurate dependences of decay rates on initial weights, as opposed to the
classical hard sphere case (\cite{[LiuYu]}). There are some interesting
observations: \newline
\newline
(a) For the soft potential case, we get sub-exponential time decay with rate
$e^{-\alpha ^{\frac{-\gamma }{p-\gamma }}t^{\frac{p}{p-\gamma }}}$, which
coincides with the results of Caflisch \cite{[Caflisch]} and Strain-Guo \cite%
{[Strain-Guo]}. These rates should be consistent since they studied the
torus case with zero moments, and our wave part excludes the fluid part of
the solution. \newline
\newline
(b) We give a very precise relation between initial velocity weights and the
asymptotic behavior of the solution ($|x|$ large), i.e., if the initial
condition is with weight $e^{\alpha |\xi |^{p}}$, then we have
sub-exponential decay $e^{-\alpha ^{\frac{1-\gamma }{p+1-\gamma }}\left\vert
x\right\vert ^{\frac{p}{p+1-\gamma }}}$. Moreover, the asymptotic behavior
of the wave part (Lemma \ref{S^t-Sup}) and the remainder part (Equation (\ref%
{remainder})) are matching in our estimate. \newline
\newline

\item Regularization estimate: The regularization estimate plays a crucial
role in this paper (see Lemma \ref{Regularity}), which enables us to obtain
the pointwise estimate without regularity assumption on the initial data. In
the proof of Lemma \ref{Regularity}, it reveals that the mixture of the two
operators $\mathbb{S}^{t}$ and $K$ transports the regularity in the
microscopic velocity $\xi $ from $K$ to the regularity in the space $x$.
Noticing that $K$ is an integral operator from $L_{\xi }^{2}$ to $H_{\xi
}^{1}$ only when $\gamma >-2$, that is why we confine ourselves to the case $%
\gamma >-2$ in this paper. This notion was firstly introduced by Liu and Yu
for the hard sphere case \cite{[LeeLiuYu], [LiuYu], [LiuYu2], [LiuYu1]} and
they call it as Mixture Lemma. However, their machinery is invalid for the
current situation, since it will result in the $e^{\infty \cdot \alpha
\left\vert \xi \right\vert ^{p}}$ weight imposed on the initial data. To
resolve this difficulty, we introduce the differential operator $\mathcal{D}%
_{t}=t\nabla _{x}+\nabla _{\xi }$, which commutes with the free transport
operator. This operator is crucial since it is a bridge between the $x$
derivative and the $\xi $ derivative. We remark that the crucial operator $\mathcal{D}_{t}$ was
firstly introduced in the paper by Gualdani, Mischler and Mouhot \cite%
{[Gualdani]}, and Wu \cite{[Wu]} applied it to reprove the Mixture Lemma
used in \cite{[LeeLiuYu], [LiuYu], [LiuYu2], [LiuYu1]}. Through this operator $\mathcal{D}_{t},
$ mixing the operator $\mathbb{S}^{t}$ with $K$ enough times will help the $%
\xi $ regularity transfer to the $x$ regularity (here "enough times" depends
on how many $\xi$ regularities we want to transfer) without any regularity
assumption in $\xi $ on the initial data. In other words, mixing operators $%
\mathbb{S}^{t}$ and $K$ enough times will lead to $x$ regularity \textbf{%
automatically. }\textbf{\ }
\newline

\item Weighted energy estimate: The pointwise estimate of the solution
outside the finite Mach number region is constructed by the weighted energy
estimate. The time dependent weight functions are chosen according to
different $\gamma $ (interactions between particles) and $p$ (initial
velocity weight). For the hard sphere case $\gamma =1$ (see Liu-Yu \cite%
{[LiuYu], [LiuYu2], [LiuYu1]}), the weight function depends only on the time
and the space variables, and exponentially grows in space (it takes the
form $\exp\{\frac{|x|-Mt}{D}\}$). Since it commutes
with the integral operator $K$, the estimate is relatively simple. However,
for this paper $-2<\gamma <1$, the weight function is much more complicated.
Indeed, it depends on the velocity variable as well and thus does not
commute with the integral operator $K$, which leads to the coercivity of
linearized collision operator cannot be applied directly and loss of control
of some terms at first glance. The difficulty is eventually overcome by fine
tuning the weight functions, introducing refined space-velocity domain
decomposition and analyzing the integral operator $K$ with weight
accordingly (see Lemma \ref{K-epsilon}).
\end{itemize}

The rest of this paper is organized as follows: We first prepare some basic
properties of the collision operator in section \ref{pre}. After that, we
construct the long wave-short wave decomposition in section \ref{LS} and the
wave-remainder decomposition in section \ref{WR}. Finally, we establish the
global wave structures in section \ref{Global}.

\section{Basic properties of the collision operator}

\label{pre}

For the linearized Boltzmann equation $\left( \ref{bot.1.d}\right) ,$ the
collision operator $L$ consists of a multiplicative operator $\nu (\xi )$
and an integral operator $K$:
\begin{equation*}
Lf=-\nu (\xi )f+Kf\,,
\end{equation*}%
where
\begin{equation*}
\nu (\xi )=\int B(\vartheta )|\xi -\xi _{\ast }|^{\gamma }\mathcal{M}(\xi
_{\ast })d\xi _{\ast }d\om\,,
\end{equation*}%
and $Kf=-K_{1}f+K_{2}f$ \ is defined as \cite{[Glassey], [Grad]}:
\begin{equation*}
K_{1}f=\int B(\vartheta )|\xi -\xi _{\ast }|^{\gamma }\mathcal{M}^{1/2}(\xi )%
\mathcal{M}^{1/2}(\xi _{\ast })f(\xi _{\ast })d\xi _{\ast }d\om\,,
\end{equation*}%
\begin{align*}
K_{2}f& =\int B(\vartheta )|\xi -\xi _{\ast }|^{\gamma }\mathcal{M}%
^{1/2}(\xi _{\ast })\mathcal{M}^{1/2}(\xi ^{\prime })f(\xi _{\ast }^{\prime
})d\xi _{\ast }d\om \\
& \quad +\int B(\vartheta )|\xi -\xi _{\ast }|^{\gamma }\mathcal{M}%
^{1/2}(\xi _{\ast })\mathcal{M}^{1/2}(\xi _{\ast }^{\prime })f(\xi ^{\prime
})d\xi _{\ast }d\om\,.
\end{align*}%
In this section we will present a number of properties and estimates of the
operators $L$, $\nu (\xi )$ and $K$. To begin with, we list some fundamental
properties of these operators, which can be found in \cite{[Caflisch],
[GolsePoupand], [Grad], [Strain-Guo]}.

\begin{lemma}
\label{basic} For any $g\in L_{\sigma }^{2}$, we have the coercivity
estimate of the linearized collision operator $L$:
\begin{equation*}
\left\langle g,Lg\right\rangle _{\xi }\lesssim -\left\vert \mathrm{P}%
_{1}g\right\vert _{L_{\sigma }^{2}}^{2}\,.
\end{equation*}%
For the multiplicative operator $\nu (\xi )$, there exist positive constants
$\nu _{0}$ and $\nu _{1}$ such that
\begin{equation}
\nu _{0}(1+\left\vert \xi \right\vert )^{\gamma }\leq \nu (\xi )\leq \nu
_{1}(1+\left\vert \xi \right\vert )^{\gamma }.  \label{nu-gamma}
\end{equation}%
Moreover, for each multi-index $\alpha $,
\begin{equation}
|\pa_{\xi }^{\alpha }\nu (\xi )|\lesssim \left\langle \xi \right\rangle
^{\gamma -|\alpha |}\,.  \label{bot.1.f}
\end{equation}%
For the integral operator $K$,
\begin{equation*}
Kf=-K_{1}f+K_{2}f=\int_{{\mathbb{R}}^{3}}-k_{1}(\xi ,\xi _{\ast })f(\xi
_{\ast })d\xi _{\ast }+\int_{{\mathbb{R}}^{3}}k_{2}(\xi ,\xi _{\ast })f(\xi
_{\ast })d\xi _{\ast }\,,
\end{equation*}%
the kernels $k_{1}(\xi ,\xi _{\ast })$ and $k_{2}(\xi ,\xi _{\ast })$
satisfy
\begin{equation*}
k_{1}(\xi ,\xi _{\ast })\lesssim |\xi -\xi _{\ast }|^{\gamma }\exp \left\{ -%
\frac{1}{4}\left( |\xi |^{2}+|\xi _{\ast }|^{2}\right) \right\} \,,
\end{equation*}%
and%
\begin{equation*}
k_{2}(\xi ,\xi _{\ast })=a\left( \xi ,\xi _{\ast },\kappa \right) \exp
\left( -\frac{(1-\kappa )}{8}\left[ \frac{\left( \left\vert \xi \right\vert
^{2}-\left\vert \xi _{\ast }\right\vert ^{2}\right) ^{2}}{\left\vert \xi
-\xi _{\ast }\right\vert ^{2}}+\left\vert \xi -\xi _{\ast }\right\vert ^{2}%
\right] \right) \,,
\end{equation*}%
for any $0<\kappa <1$, together with
\begin{equation*}
a(\xi ,\xi _{\ast },\kappa )\leq C_{\kappa }|\xi -\xi _{\ast }|^{-1}(1+|\xi
|+|\xi _{\ast }|)^{\gamma -1}\,.
\end{equation*}%
In addition, their derivatives as well have similar estimates, i.e.,
\begin{equation*}
|\nabla _{\xi }k_{1}(\xi ,\xi _{\ast })|,|\nabla _{\xi _{\ast }}k_{1}(\xi
,\xi _{\ast })|\lesssim |\xi -\xi _{\ast }|^{\gamma -1}\exp \left\{ -\frac{1%
}{4}\left( |\xi |^{2}+|\xi _{\ast }|^{2}\right) \right\} \,,
\end{equation*}%
\begin{equation*}
|\nabla _{\xi }k_{2}(\xi ,\xi _{\ast })|,|\nabla _{\xi _{\ast }}k_{2}(\xi
,\xi _{\ast })|\lesssim |\nabla _{\xi }a\left( \xi ,\xi _{\ast }\right)
|\exp \left( -\frac{(1-\kappa )}{8}\left[ \frac{\left( \left\vert \xi
\right\vert ^{2}-\left\vert \xi _{\ast }\right\vert ^{2}\right) ^{2}}{%
\left\vert \xi -\xi _{\ast }\right\vert ^{2}}+\left\vert \xi -\xi _{\ast
}\right\vert ^{2}\right] \right) \,.
\end{equation*}
\end{lemma}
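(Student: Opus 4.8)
The statement collects a set of by-now classical facts, so rather than redoing everything I would organize the proof as a sequence of reductions to the explicit Grad representation of the kernel and to elementary Gaussian estimates, following \cite{[Caflisch], [Glassey], [Grad], [Strain-Guo]}. The plan is to treat in turn the bounds on $\nu$, the kernel estimates for $K$, and finally the coercivity of $L$.

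For $\nu$, the first observation is that the angular variable can be integrated out: since $\cos\vartheta = |\hat n\cdot\om|$ with $\hat n=(\xi-\xi_\ast)/|\xi-\xi_\ast|$, rotational invariance of the measure on $S^2$ gives $\int_{S^2}B(\vartheta)\,d\om = b_0$, a finite positive constant, so $\nu(\xi)=b_0\int_{\R^3}|\xi-\xi_\ast|^{\ga}\mathcal{M}(\xi_\ast)\,d\xi_\ast$, which is finite because $\ga>-2>-3$ makes $|\cdot|^{\ga}$ locally integrable in $\R^3$. Splitting the integral into $|\xi_\ast|\le|\xi|/2$, where $|\xi-\xi_\ast|\sim|\xi|$, and $|\xi_\ast|>|\xi|/2$, where the Gaussian decays faster than any polynomial, shows $\nu(\xi)\sim|\xi|^{\ga}$ for $|\xi|$ large; continuity and strict positivity on compact sets then give \eqref{nu-gamma}. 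For \eqref{bot.1.f} I would change variables to $\eta=\xi-\xi_\ast$, write $\nu(\xi)=b_0\int|\eta|^{\ga}\mathcal{M}(\xi-\eta)\,d\eta$, and distribute the derivatives: as many as possible are placed on $|\eta|^{\ga}$ (keeping $|\eta|^{\ga-j}$ locally integrable, which uses $\ga>-2$), the rest on the Gaussian; the same small-/large-$\eta$ splitting then produces the decay $\langle\xi\rangle^{\ga-|\alpha|}$.

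The substantive part is the kernel of $K$. For $K_1$ the factor $\mathcal{M}^{1/2}(\xi)\mathcal{M}^{1/2}(\xi_\ast)$ is independent of $\om$, so the same angular integration gives $k_1(\xi,\xi_\ast)=b_0|\xi-\xi_\ast|^{\ga}\mathcal{M}^{1/2}(\xi)\mathcal{M}^{1/2}(\xi_\ast)$, and the stated pointwise and derivative bounds follow by the product rule (each $\xi$- or $\xi_\ast$-derivative costs at most one power $|\xi-\xi_\ast|^{-1}$ and is otherwise absorbed into the Gaussian). For $K_2$ one cannot integrate out $\om$; instead I would use Grad's change of variables, rewriting the two gain terms as integrals over the hyperplane through $\xi$ orthogonal to the appropriate direction and reassembling them as $\int k_2(\xi,\xi_\ast)f(\xi_\ast)\,d\xi_\ast$. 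On that hyperplane the exponent $-\tfrac14(|\xi'|^2+|\xi_\ast'|^2)=-\tfrac14(|\xi|^2+|\xi_\ast|^2)$ is a quadratic whose minimum over the plane is a constant multiple of $\frac{(|\xi|^2-|\xi_\ast|^2)^2}{|\xi-\xi_\ast|^2}+|\xi-\xi_\ast|^2$; completing the square, retaining a fraction $1-\ka$ of this for the stated exponential and using the complementary fraction to carry out the remaining Gaussian integral over the plane produces $a(\xi,\xi_\ast,\ka)$, and tracking the leftover cross-section factor $|\xi-\xi_\ast|^{\ga}$ against the transverse Gaussian weight yields $a\le C_\ka|\xi-\xi_\ast|^{-1}(1+|\xi|+|\xi_\ast|)^{\ga-1}$. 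The derivative estimates then come from differentiating $k_2=a\exp(\cdots)$ directly: the gradient of the exponent brings down a polynomial in $(\xi,\xi_\ast)$ which is reabsorbed at the price of replacing $\ka$ by a slightly smaller value, which is exactly why the stated bounds still carry an exponential of the same form with $(1-\ka)$. I expect this $k_2$ computation — the Carleman-type change of variables and the precise completion of the square, together with bookkeeping of the $(1+|\xi|+|\xi_\ast|)^{\ga-1}$ gain — to be the main technical obstacle.

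Finally, for the coercivity: since $L$ is self-adjoint on $L^2_\xi$ with $\mathrm{Ker}(L)=\mathrm{Range}(\mathrm{P}_0)$, one has $\langle g,Lg\rangle_\xi=\langle \mathrm{P}_1 g,L\,\mathrm{P}_1 g\rangle_\xi$, so it suffices to estimate $\langle h,Lh\rangle_\xi=-\langle h,\nu h\rangle_\xi+\langle h,Kh\rangle_\xi$ for $h=\mathrm{P}_1 g$. The first term is bounded above by $-\nu_0|h|_{L^2_\sigma}^2$ by \eqref{nu-gamma}; the classical fact that $K$ is compact relative to $\nu$, together with the fact that the quadratic form $\langle h,Lh\rangle_\xi$ has no null directions outside $\mathrm{Ker}(L)$, upgrades this to $\langle h,Lh\rangle_\xi\le-\delta|h|_{L^2_\sigma}^2$ for some $\delta>0$, which is the asserted estimate. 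The only point to keep in mind — and the reason this is stated with the degenerate norm $|\cdot|_{L^2_\sigma}$ rather than $|\cdot|_{L^2_\xi}$ — is that for $-2<\ga<0$ the gap is genuinely with respect to $|\cdot|_{L^2_\sigma}$; this weaker coercivity is precisely what later forces the sub-exponential rather than exponential decay in the soft-potential case.
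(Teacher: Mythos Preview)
Your sketch is correct and follows the standard arguments from the very references the paper cites. In fact, the paper does not give its own proof of this lemma at all: it simply states the result and attributes it to \cite{[Caflisch], [GolsePoupand], [Grad], [Strain-Guo]}, so there is nothing to compare against beyond noting that your outline is precisely the classical route taken in those sources.
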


According to the above estimates of the integral operator $K,$ it follows
that for any $g_{1},g_{2}\in L_{\sigma }^{2}\cap L_{\xi }^{2},$%
\begin{equation}
\left\vert \left\langle g_{1},Kg_{2}\right\rangle _{\xi }\right\vert
\lesssim \left\{
\begin{array}{l}
\displaystyle\left\vert g_{1}\right\vert _{L_{\xi }^{2}}\left\vert
g_{2}\right\vert _{L_{\xi }^{2}}\quad \hbox{for}\quad 0\leq \gamma <1\,, \\
\\
\displaystyle\left\vert g_{1}\right\vert _{L_{\sigma }^{2}}\left\vert
g_{2}\right\vert _{L_{\sigma }^{2}}\quad \hbox{for}\quad -2<\gamma <0\,,%
\end{array}%
\right.
\end{equation}%
and for any $g_{0}\in L^{2},$%
\begin{equation}
\Vert Kg_{0}\Vert _{H_{\xi }^{1}L_{x}^{2}}\lesssim \Vert g_{0}\Vert _{L^{2}}.
\end{equation}

Next, we will provide the sup norm estimate for the integral operator $K$,
which extends (6.2) in Proposition 6.1 of Caflisch \cite{[Caflisch]} to the
case $-2<\gamma <1.$

\begin{lemma}
\label{Exten. of Calfisch}Let $0<p\leq 2.$ For any $\beta _{1}\geq 0$ and $%
0\leq \beta _{2}<\frac{1}{4},$ the operator $K$ satisfies
\begin{equation}
\left\vert Kg\left( \xi \right) \right\vert =\left\vert \int k\left( \xi
,\xi _{\ast }\right) g\left( \xi _{\ast }\right) d\xi _{\ast }\right\vert
\lesssim \left\langle \xi \right\rangle ^{-\beta _{1}+\gamma -2}e^{-\beta
_{2}\left\vert \xi \right\vert ^{p}}\left\vert g\right\vert _{L_{\xi
}^{\infty }\left( \left\langle \xi \right\rangle ^{\beta _{1}}e^{\beta
_{2}\left\vert \xi \right\vert ^{p}}\right) }.  \label{K-1}
\end{equation}
\end{lemma}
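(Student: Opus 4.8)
The plan is to pull the weighted sup‑norm out of the integral and reduce the claim to a pure kernel estimate: since $|g(\xi_\ast)| \le \langle \xi_\ast\rangle^{-\beta_1} e^{-\beta_2|\xi_\ast|^p}\,|g|_{L^\infty_\xi(\langle\xi\rangle^{\beta_1}e^{\beta_2|\xi|^p})}$ and $k=-k_1+k_2$, it suffices to show
\[
\int_{\mathbb R^3} |k(\xi,\xi_\ast)|\, \langle\xi_\ast\rangle^{-\beta_1}\, e^{\beta_2(|\xi|^p - |\xi_\ast|^p)}\, d\xi_\ast \;\lesssim\; \langle\xi\rangle^{-\beta_1 + \gamma - 2}.
\]
I will use two elementary reductions throughout. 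First, $\langle\xi\rangle \le \sqrt2\,\langle\xi_\ast\rangle(1 + |\xi - \xi_\ast|)$ gives $\langle\xi_\ast\rangle^{-\beta_1} \lesssim \langle\xi\rangle^{-\beta_1}(1 + |\xi - \xi_\ast|)^{\beta_1}$, so this weight only ever contributes the factor $\langle\xi\rangle^{-\beta_1}$ together with a polynomial in $|\xi-\xi_\ast|$ that any Gaussian in the kernel absorbs. Second, since $0<p\le2$, the mean value theorem applied to $t\mapsto t^{p/2}$ yields $|\xi|^p-|\xi_\ast|^p\le |\xi|^2-|\xi_\ast|^2+2$ whenever $|\xi|\ge|\xi_\ast|$, while $e^{\beta_2(|\xi|^p-|\xi_\ast|^p)}\le1$ when $|\xi|<|\xi_\ast|$; hence it is enough to treat the exponential weight as $e^{\beta_2(|\xi|^2-|\xi_\ast|^2)}$ on $\{|\xi|>|\xi_\ast|\}$ only.

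The $k_1$‑contribution is immediate from Lemma~\ref{basic}: using $\beta_2|\xi|^p\le\beta_2|\xi|^2+\beta_2$ and $\beta_2<1/4$, the factor $e^{-\frac14|\xi|^2+\beta_2|\xi|^p}$ is $\lesssim e^{-(\frac14-\beta_2)|\xi|^2}\le C_N\langle\xi\rangle^{-N}$ for every $N$, while $\int|\xi-\xi_\ast|^\gamma e^{-\frac14|\xi_\ast|^2}d\xi_\ast\lesssim\langle\xi\rangle^\gamma$ for $\gamma>-3$ controls the remaining $\xi_\ast$‑integral; choosing $N\ge\beta_1+2$ gives far more than needed. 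The work is all in the $k_2$‑term. Writing $s=|\xi|^2-|\xi_\ast|^2$, $r=|\xi-\xi_\ast|$ and using Lemma~\ref{basic} with cutoff $\kappa$, so that $|k_2|\lesssim r^{-1}(1+|\xi|+|\xi_\ast|)^{\gamma-1}\exp(-a[s^2/r^2+r^2])$ with $a=(1-\kappa)/8$, I would neutralise the dangerous factor $e^{\beta_2 s}$ (present only for $|\xi|>|\xi_\ast|$) by completing the square: for small $\theta\in(0,1)$,
\[
e^{\beta_2 s}\exp\Bigl(-a\tfrac{s^2}{r^2}\Bigr)\le\exp\Bigl(-\theta a\tfrac{s^2}{r^2}\Bigr)\exp\Bigl(\tfrac{\beta_2^2 r^2}{4(1-\theta)a}\Bigr),
\]
and the residual factor is reabsorbed into $e^{-ar^2}$ provided $\beta_2<\tfrac{\sqrt{1-\theta}\,(1-\kappa)}{4}$, which can be arranged because $\beta_2<1/4$ by fixing $\theta,\kappa$ small. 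Since $\gamma<1$, one has $(1+|\xi|+|\xi_\ast|)^{\gamma-1}\le(1+|\xi|)^{\gamma-1}\asymp\langle\xi\rangle^{\gamma-1}$, so the whole $k_2$‑estimate reduces to the ``gain of one power'' bound
\[
\int_{\mathbb R^3}|\xi-\xi_\ast|^{-1}(1+|\xi-\xi_\ast|)^{\beta_1}\exp\Bigl(-c_0\Bigl[\tfrac{(|\xi|^2-|\xi_\ast|^2)^2}{|\xi-\xi_\ast|^2}+|\xi-\xi_\ast|^2\Bigr]\Bigr)d\xi_\ast\;\lesssim\;\langle\xi\rangle^{-1},
\]
valid for any fixed $c_0>0$ (in the region $|\xi|\le|\xi_\ast|$ no square completion is needed and one uses this bound with $c_0=a$).

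To prove that integral bound I would choose axes with $\xi=(|\xi|,0,0)$, write $\xi_\ast=\xi-(\eta_1,\eta_\perp)$ with $\eta_1\in\mathbb R$, $\eta_\perp\in\mathbb R^2$, and change variables $(\eta_1,|\eta_\perp|)\mapsto(\eta_1,u)$ with $u=\eta_1^2+|\eta_\perp|^2=r^2$, so that $|\eta_\perp|\,d|\eta_\perp|=\tfrac12\,du$ and $|\xi-\xi_\ast|^{-1}=u^{-1/2}$. Expanding $(|\xi|^2-|\xi_\ast|^2)^2/r^2=4|\xi|^2\eta_1^2/u-4|\xi|\eta_1+u$ turns the bracket into $4|\xi|^2\eta_1^2/u-4|\xi|\eta_1+2u$; by AM--GM, $4|\xi|^2\eta_1^2/u+2u\ge4\sqrt2\,|\xi|\,|\eta_1|$, so after performing the (uniformly bounded) $u$‑integral one is left with $\int_{\mathbb R}e^{4c_0|\xi|\eta_1-4\sqrt2\,c_0|\xi||\eta_1|}\,d\eta_1\lesssim\langle\xi\rangle^{-1}$, the polynomial $(1+r)^{\beta_1}$ being harmless since $(1+r)^{\beta_1}e^{-c_0 r^2/2}$ is bounded. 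Assembling the $k_1$‑ and $k_2$‑pieces, together with the reductions of the first paragraph, finishes the proof.

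The step I expect to be the main obstacle is this gain‑of‑one‑power integral: extracting the extra $\langle\xi\rangle^{-1}$ — rather than merely $O(1)$, which is all the crude bound $\int|k_2|\,d\xi_\ast\lesssim\nu(\xi)$ gives — forces one to exploit the $s^2/r^2$ and the $r^2$ parts of the exponent simultaneously, and the change of variables above is arranged precisely so that the constant $\sqrt2>1$ in the AM--GM step dominates the ``bad'' factor $e^{4c_0|\xi|\eta_1}$ coming from expanding the square. A secondary delicate point is tracking the constant through the completion of the square, which is exactly where the hypothesis $\beta_2<1/4$ enters and is sharp.
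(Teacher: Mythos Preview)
Your overall strategy is sound and close in spirit to the paper's: pull out the weighted sup norm, absorb the exponential weight $e^{\beta_2(|\xi|^p-|\xi_\ast|^p)}$ into the Gaussian of the kernel, and then prove the ``gain of one power'' integral estimate. However, the final AM--GM step, as you have written it, does not quite close. You apply AM--GM to the full $4|\xi|^2\eta_1^2/u+2u$ to get the constant $4\sqrt2$, but that consumes all the $u$-decay in the exponent; the remaining $u$-integral $\int_{\eta_1^2}^{\infty}u^{-1/2}(1+\sqrt u)^{\beta_1}\,du$ then diverges (and you separately invoke $e^{-c_0r^2/2}$ to kill $(1+r)^{\beta_1}$, which you no longer have). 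The fix is immediate: split $2u=(2-2\epsilon)u+\epsilon u+\epsilon u$, use the first piece in AM--GM to produce $4\sqrt{2-2\epsilon}\,|\xi|\,|\eta_1|$, the second to make the $u$-integral $\int u^{-1/2}e^{-c_0\epsilon u}\,du$ finite, and the third to absorb $(1+r)^{\beta_1}$. Since $\sqrt{2-2\epsilon}>1$ for any $\epsilon\in(0,\tfrac12)$, the $\eta_1$-integral still gives $\langle\xi\rangle^{-1}$.

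For comparison, the paper proceeds a bit differently on two points. First, it absorbs the exponential weight not by completing the square but via the elementary inequality $\big||\xi|^p-|\xi_\ast|^p\big|\le\big||\xi|^2-|\xi_\ast|^2\big|^{p/2}$ together with $r^2+s^2/r^2\ge 2|s|$; choosing $\kappa=\tfrac{1-4\beta_2}{2}$ then makes $e^{-\beta_2|s|+\beta_2|s|^{p/2}}\le e^{\beta_2}$ disappear uniformly, which is arguably cleaner and makes the role of the threshold $\beta_2<\tfrac14$ equally transparent. Second, instead of your global bound $\langle\xi_\ast\rangle^{-\beta_1}\lesssim\langle\xi\rangle^{-\beta_1}(1+r)^{\beta_1}$, the paper splits the $\xi_\ast$-domain into $|\xi_\ast|<\tfrac13|\xi|$ (where $r\gtrsim|\xi|$ gives outright exponential decay) and $|\xi_\ast|>\tfrac13|\xi|$ (where $\langle\xi_\ast\rangle^{-\beta_1}\lesssim\langle\xi\rangle^{-\beta_1}$ directly), then quotes Caflisch's Proposition~5.3 for the $\langle\xi\rangle^{-1}$ gain rather than computing it. Your route is more self-contained, and once the AM--GM bookkeeping is repaired it is a perfectly valid alternative.
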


\begin{proof}
We first give an estimate on the kernel $k,$ which extends Proposition 5.1
in \cite{[Caflisch]} to the case $-2<\gamma <1$. For any $0<\kappa <1,$ we
have
\begin{eqnarray*}
k_{1}\left( \xi ,\xi _{\ast }\right) &\lesssim &\left\vert \xi -\xi _{\ast
}\right\vert ^{\gamma }\exp \left\{ -\frac{1}{4}\left( \left\vert \xi
\right\vert ^{2}+\left\vert \xi _{\ast }\right\vert ^{2}\right) \right\} \\
&\leq &C_{\kappa }\left\vert \xi -\xi _{\ast }\right\vert ^{-2}\left(
1+\left\vert \xi \right\vert +\left\vert \xi _{\ast }\right\vert \right)
^{\gamma -1}\exp \left\{ -\frac{1}{4}\left( 1-\kappa \right) \left(
\left\vert \xi \right\vert ^{2}+\left\vert \xi _{\ast }\right\vert
^{2}\right) \right\} ,
\end{eqnarray*}%
for some constant $C_{\kappa }>0$. Since
\begin{equation*}
\frac{1}{4}\left( \left\vert \xi \right\vert ^{2}+\left\vert \xi _{\ast
}\right\vert ^{2}\right) \geq \frac{1}{8}\left( \left\vert \xi -\xi _{\ast
}\right\vert ^{2}+\frac{\left\vert \left\vert \xi \right\vert
^{2}-\left\vert \xi _{\ast }\right\vert ^{2}\right\vert ^{2}}{\left\vert \xi
-\xi _{\ast }\right\vert ^{2}}\right) ,
\end{equation*}%
we deduce
\begin{equation*}
k_{1}\left( \xi ,\xi _{\ast }\right) \lesssim \left\vert \xi -\xi _{\ast
}\right\vert ^{-2}\left( 1+\left\vert \xi \right\vert +\left\vert \xi _{\ast
}\right\vert \right) ^{\gamma -1}\exp \left\{ -\frac{\left( 1-\kappa \right)
}{8}\left( \left\vert \xi -\xi _{\ast }\right\vert ^{2}+\frac{\left\vert
\left\vert \xi \right\vert ^{2}-\left\vert \xi _{\ast }\right\vert
^{2}\right\vert ^{2}}{\left\vert \xi -\xi _{\ast }\right\vert ^{2}}\right)
\right\} .
\end{equation*}%
Together with $k_{2}\left( \xi ,\xi _{\ast }\right) ,$ it follows that for
any $0<\kappa <1,$%
\begin{equation}
\left\vert k\left( \xi ,\xi _{\ast }\right) \right\vert \lesssim \left\vert
\xi -\xi _{\ast }\right\vert ^{-2}\left( 1+\left\vert \xi \right\vert
+\left\vert \xi _{\ast }\right\vert \right) ^{\gamma -1}\exp \left\{ -\frac{%
\left( 1-\kappa \right) }{8}\left( \left\vert \xi -\xi _{\ast }\right\vert
^{2}+\frac{\left\vert \left\vert \xi \right\vert ^{2}-\left\vert \xi _{\ast
}\right\vert ^{2}\right\vert ^{2}}{\left\vert \xi -\xi _{\ast }\right\vert
^{2}}\right) \right\} \,.  \label{Est-for-k}
\end{equation}

Now, in view of $\left( \ref{Est-for-k}\right) ,$
\begin{eqnarray*}
&&\left\vert \int k\left( \xi ,\xi _{\ast }\right) g\left( \xi _{\ast
}\right) d\xi _{\ast }\right\vert \\
&\lesssim &\int_{\mathbb{R}^{3}}\frac{1}{\left\vert \xi -\xi _{\ast
}\right\vert ^{2}}\left( 1+\left\vert \xi \right\vert +\left\vert \xi _{\ast
}\right\vert \right) ^{\gamma -1}\exp \left( -\frac{\left( 1-\kappa \right)
}{8}\left( \left\vert \xi -\xi _{\ast }\right\vert ^{2}+\frac{\left\vert
\left\vert \xi \right\vert ^{2}-\left\vert \xi _{\ast }\right\vert
^{2}\right\vert ^{2}}{\left\vert \xi -\xi _{\ast }\right\vert ^{2}}\right)
\right) \left\vert g\left( \xi _{\ast }\right) \right\vert d\xi _{\ast } \\
&\lesssim &e^{-\beta _{2}\left\vert \xi \right\vert ^{p}}\left( 1+\left\vert
\xi \right\vert \right) ^{\gamma -1}\left\vert g\right\vert _{L_{\xi
}^{\infty }\left( \left\langle \xi \right\rangle ^{\beta _{1}}e^{\beta
_{2}\left\vert \xi \right\vert ^{p}}\right) }\cdot \mathbb{A}(\xi )\,,
\end{eqnarray*}%
where
\begin{equation*}
\mathbb{A}(\xi )=\int_{\mathbb{R}^{3}}\frac{1}{\left\vert \xi -\xi _{\ast
}\right\vert ^{2}}\exp \left\{ -\frac{\left( 1-\kappa \right) }{8}\left(
\left\vert \xi -\xi _{\ast }\right\vert ^{2}+\frac{\left\vert \left\vert \xi
\right\vert ^{2}-\left\vert \xi _{\ast }\right\vert ^{2}\right\vert ^{2}}{%
\left\vert \xi -\xi _{\ast }\right\vert ^{2}}\right) +\beta _{2}\left\vert
\xi \right\vert ^{p}-\beta _{2}\left\vert \xi _{\ast }\right\vert
^{p}\right\} \left( 1+\left\vert \xi _{\ast }\right\vert \right) ^{-\beta
_{1}}d\xi _{\ast }\,.
\end{equation*}%
Notice that
\begin{equation*}
\left\vert \xi -\xi _{\ast }\right\vert ^{2}+\frac{\left\vert \left\vert \xi
\right\vert ^{2}-\left\vert \xi _{\ast }\right\vert ^{2}\right\vert ^{2}}{%
\left\vert \xi -\xi _{\ast }\right\vert ^{2}}\geq 2\left\vert \left\vert \xi
\right\vert ^{2}-\left\vert \xi _{\ast }\right\vert ^{2}\right\vert ,
\end{equation*}%
and
\begin{equation*}
\left\vert \left\vert \xi \right\vert ^{p}-\left\vert \xi _{\ast
}\right\vert ^{p}\right\vert \leq \left\vert \left\vert \xi \right\vert
^{2}-\left\vert \xi _{\ast }\right\vert ^{2}\right\vert ^{\frac{p}{2}}.
\end{equation*}%
Picking $\kappa =\frac{1-4\beta _{2}}{2}$ and $\varpi =\frac{1-4\beta _{2}}{%
16}$ yields%
\begin{align*}
\mathbb{A}(\xi )& \lesssim \int_{\mathbb{R}^{3}}\frac{\left( 1+\left\vert
\xi _{\ast }\right\vert \right) ^{-\beta _{1}}}{\left\vert \xi -\xi _{\ast
}\right\vert ^{2}}\exp \left\{ -\varpi \left( \left\vert \xi -\xi _{\ast
}\right\vert ^{2}+\frac{\left\vert \left\vert \xi \right\vert
^{2}-\left\vert \xi _{\ast }\right\vert ^{2}\right\vert ^{2}}{\left\vert \xi
-\xi _{\ast }\right\vert ^{2}}\right) -\beta _{2}\left\vert \left\vert \xi
\right\vert ^{2}-\left\vert \xi _{\ast }\right\vert ^{2}\right\vert +\beta
_{2}\left\vert \left\vert \xi \right\vert ^{2}-\left\vert \xi _{\ast
}\right\vert ^{2}\right\vert ^{\frac{p}{2}}\right\} d\xi _{\ast } \\
& \lesssim \int_{\mathbb{R}^{3}}\frac{1}{\left\vert \xi -\xi _{\ast
}\right\vert ^{2}}\left( 1+\left\vert \xi _{\ast }\right\vert \right)
^{-\beta _{1}}\exp \left\{ -\varpi \left( \left\vert \xi -\xi _{\ast
}\right\vert ^{2}+\frac{\left\vert \left\vert \xi \right\vert
^{2}-\left\vert \xi _{\ast }\right\vert ^{2}\right\vert ^{2}}{\left\vert \xi
-\xi _{\ast }\right\vert ^{2}}\right) \right\} d\xi _{\ast } \\
& \equiv I\,,
\end{align*}%
since
\begin{equation*}
\exp \left( -\beta _{2}\left\vert \left\vert \xi \right\vert ^{2}-\left\vert
\xi _{\ast }\right\vert ^{2}\right\vert +\beta _{2}\left\vert \left\vert \xi
\right\vert ^{2}-\left\vert \xi _{\ast }\right\vert ^{2}\right\vert ^{\frac{p%
}{2}}\right) <e^{\beta _{2}},
\end{equation*}%
uniformly in $\xi ,\ \xi _{\ast }$ and $p.$ We split $I$ into two parts: $%
I_{1}$, with $\left\vert \xi _{\ast }\right\vert <\frac{1}{3}\left\vert \xi
\right\vert ,$ and $I_{2},$ with $\left\vert \xi _{\ast }\right\vert >\frac{%
1 }{3}\left\vert \xi \right\vert .$ Then%
\begin{equation}
I_{1}\lesssim e^{-\frac{\varpi }{4}\left\vert \xi \right\vert ^{2}}
\label{Integral I-1}
\end{equation}%
since $\left\vert \xi -\xi _{\ast }\right\vert ^{2}\geq \frac{4}{9}%
\left\vert \xi \right\vert ^{2}$ in that domain. In the domain integration
for $I_{2},$ we have $\left( 1+\left\vert \xi _{\ast }\right\vert \right) >%
\frac{1}{3}\left( 1+\left\vert \xi \right\vert \right) ,$ so that
\begin{eqnarray}
I_{2} &\lesssim &\left( 1+\left\vert \xi \right\vert \right) ^{-\beta
_{1}}\int_{\left\vert \xi _{\ast }\right\vert >\frac{1}{3}\left\vert \xi
\right\vert }\frac{1}{\left\vert \xi -\xi _{\ast }\right\vert ^{2}}\exp
\left\{ -\varpi \left( \left\vert \xi -\xi _{\ast }\right\vert ^{2}+\frac{%
\left\vert \left\vert \xi \right\vert ^{2}-\left\vert \xi _{\ast
}\right\vert ^{2}\right\vert ^{2}}{\left\vert \xi -\xi _{\ast }\right\vert
^{2}}\right) \right\} d\xi _{\ast }  \notag \\
&\lesssim &\left( 1+\left\vert \xi \right\vert \right) ^{-\beta _{1}-1},
\label{Integral I-2}
\end{eqnarray}%
due to Proposition 5.3 in \cite{[Caflisch]}. Combining $\left( \ref{Integral
I-1}\right) $ and $\left( \ref{Integral I-2}\right) ,$ we find
\begin{equation*}
I\lesssim \left( 1+\left\vert \xi \right\vert \right) ^{-\beta _{1}-1},
\end{equation*}%
and hence
\begin{equation*}
\left\vert Kg\left( \xi \right) \right\vert \lesssim \left\langle \xi
\right\rangle ^{-\beta _{1}+\gamma -2}e^{-\beta _{2}\left\vert \xi
\right\vert ^{p}}\left\vert g\right\vert _{L_{\xi }^{\infty }\left(
\left\langle \xi \right\rangle ^{\beta _{1}}e^{\beta _{2}\left\vert \xi
\right\vert ^{p}}\right) }.
\end{equation*}
\end{proof}

In fact, during the proof of this lemma, one can also infer that $k\left(
\xi ,\xi _{\ast }\right) $ is integrable in $\xi _{\ast }$ with%
\begin{equation}
\int_{\mathbb{R}^{3}}\left\vert k\left( \xi ,\xi _{\ast }\right) \right\vert
d\xi _{\ast }\lesssim \left( 1+\left\vert \xi \right\vert \right) ^{\gamma
-2},  \label{k-integ}
\end{equation}%
for $-2<\gamma <1.$

Regarding the weighted energy estimate, the following weight functions $\mu
(x,\xi )$ will be taken into account:
\begin{equation}
\mu (x,\xi )=1,\quad \hbox{or}\quad \exp \left( \epsilon \theta (x,\xi
)\right) \,,  \label{weight-function2}
\end{equation}%
where
\begin{align*}
\theta (x,\xi )& =5\Big(\de\left\langle x\right\rangle \Big)^{\frac{p}{p+1-%
\ga}}\left( 1-\chi \left( \de\left\langle x\right\rangle \left\langle \xi
\right\rangle ^{\ga-p-1}\right) \right) \\
& \quad +\bigg[\left( 1-\chi \Big(\de\left\langle x\right\rangle
\left\langle \xi \right\rangle ^{\ga-p-1}\Big)\right) \left[ \de\left\langle
x\right\rangle \right] \left\langle \xi \right\rangle ^{\ga-1}+3\left\langle
\xi \right\rangle ^{p}\bigg]\chi \left( \de\left\langle x\right\rangle
\left\langle \xi \right\rangle ^{\ga-p-1}\right) \,,
\end{align*}%
with $0<p\leq 2;$ the constants $\epsilon $ and$\ \delta >0$ will be chosen
sufficiently small later on. Among them, the choices of the functions $%
\theta $ are motivated by \cite{[cc]}. Under these considerations, we need
the estimates of $K$ as below.

\begin{lemma}
\label{K-esti-2}Let $0<p\leq 2$ and $g_{1},g_{2}\in L_{\sigma }^{2}\cap
L_{\xi }^{2}.\ $ Then for any $\epsilon \geq 0$ sufficiently small,%
\begin{equation}
\left\vert \left\langle g_{1},e^{\epsilon \theta \left( x,\xi \right)
}Ke^{-\epsilon \theta \left( x,\xi \right) }g_{2}\right\rangle _{\xi
}-\left\langle g_{1},Kg_{2}\right\rangle _{\xi }\right\vert \lesssim \left\{
\begin{array}{l}
\displaystyle\epsilon \left\vert g_{1}\right\vert _{L_{\xi }^{2}}\left\vert
g_{2}\right\vert _{L_{\xi }^{2}}\quad \hbox{for}\quad 0\leq \gamma <1\,, \\
\\
\displaystyle\epsilon \left\vert g_{1}\right\vert _{L_{\sigma
}^{2}}\left\vert g_{2}\right\vert _{L_{\sigma }^{2}}\quad \hbox{for}\quad
-2<\gamma <0.%
\end{array}%
\right.  \label{Weighted-K-1}
\end{equation}%
In particular,
\begin{equation}
\left\vert \left\langle g_{1},e^{\epsilon \theta \left( x,\xi \right)
}Ke^{-\epsilon \theta \left( x,\xi \right) }g_{2}\right\rangle _{\xi
}\right\vert \lesssim \left\{
\begin{array}{l}
\displaystyle\left\vert g_{1}\right\vert _{L_{\xi }^{2}}\left\vert
g_{2}\right\vert _{L_{\xi }^{2}}\quad \hbox{for}\quad 0\leq \gamma <1\,, \\
\\
\displaystyle\left\vert g_{1}\right\vert _{L_{\sigma }^{2}}\left\vert
g_{2}\right\vert _{L_{\sigma }^{2}}\quad \hbox{for}\quad -2<\gamma <0.%
\end{array}%
\right.  \label{Weighted-K-2}
\end{equation}%
Consequently, for $g_{0}\in L^{2}\left( \mu \right) ,$%
\begin{equation}
\Vert Kg_{0}\Vert _{L^{2}\left( \mu \right) }\lesssim \Vert g_{0}\Vert
_{L^{2}\left( \mu \right) }.  \label{K-3}
\end{equation}
\end{lemma}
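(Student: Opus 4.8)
The plan is to reduce all three displayed inequalities to one pointwise bound on the increment $\theta(x,\xi)-\theta(x,\xi_{\ast})$ and then to play that bound against the Gaussian decay of the collision kernel. Writing $K=-K_{1}+K_{2}$ with total kernel $k=-k_{1}+k_{2}$, we have
\begin{equation*}
\left\langle g_{1},e^{\epsilon\theta}Ke^{-\epsilon\theta}g_{2}\right\rangle _{\xi}-\left\langle g_{1},Kg_{2}\right\rangle _{\xi}=\iint g_{1}(\xi)\,k(\xi,\xi_{\ast})\Big(e^{\epsilon(\theta(x,\xi)-\theta(x,\xi_{\ast}))}-1\Big)g_{2}(\xi_{\ast})\,d\xi_{\ast}\,d\xi ,
\end{equation*}
so by the elementary inequality $|e^{t}-1|\leq|t|\,e^{|t|}$ it suffices to prove that, for $\epsilon$ sufficiently small, the modified kernel $|k(\xi,\xi_{\ast})|\,\big|e^{\epsilon(\theta(x,\xi)-\theta(x,\xi_{\ast}))}-1\big|$ is dominated, uniformly in $x$, by $\epsilon$ times a kernel of the same type as the one in \eqref{Est-for-k}. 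Granting this, \eqref{Weighted-K-1} follows by the very same reasoning (a Schur-type estimate built on \eqref{k-integ} and the refined kernel estimates of Caflisch \cite{[Caflisch]} and Strain--Guo \cite{[Strain-Guo]}) that yields the consequence of Lemma \ref{basic} recorded right after it, now with an extra factor $\epsilon$; then \eqref{Weighted-K-2} is immediate from \eqref{Weighted-K-1} and that consequence by the triangle inequality. Finally \eqref{K-3} with $\mu\equiv1$ is the unweighted bound, and for $\mu=e^{\epsilon\theta}$ one writes $\Vert Kg_{0}\Vert _{L^{2}(\mu)}=\Vert e^{\epsilon\theta}Ke^{-\epsilon\theta}(e^{\epsilon\theta}g_{0})\Vert _{L^{2}}$, invokes the pointwise-in-$x$ operator bound $|e^{\epsilon\theta(x,\cdot)}Ke^{-\epsilon\theta(x,\cdot)}h|_{L_{\xi}^{2}}\lesssim|h|_{L_{\xi}^{2}}$ coming from \eqref{Weighted-K-2} by taking the supremum over $g_{1}$ (for $-2<\gamma<0$ one also uses $L_{\xi}^{2}\subset L_{\sigma}^{2}$ and duality), and integrates in $x$.

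The heart of the matter is the increment bound. The key observation is that every $\xi$-dependent ingredient of $\theta$ in \eqref{weight-function2} --- namely $\left\langle \xi\right\rangle ^{p}$, $\left\langle \xi\right\rangle ^{\ga-1}$ and $\chi(\de\left\langle x\right\rangle \left\langle \xi\right\rangle ^{\ga-p-1})$ --- is a function of $\left\langle \xi\right\rangle $ alone, so $\theta(x,\xi)$ depends on $\xi$ only through $r=|\xi|$; in particular $\theta(x,\xi)=\theta(x,\xi_{\ast})$ whenever $|\xi|=|\xi_{\ast}|$. I therefore regard $\theta(x,\cdot)$ as a function of $r\geq0$ and bound $|\pa_{r}\theta(x,r)|$. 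On $\mathrm{supp}\,\chi(\de\left\langle x\right\rangle \left\langle \xi\right\rangle ^{\ga-p-1})$ one has $\de\left\langle x\right\rangle \lesssim\left\langle \xi\right\rangle ^{p+1-\ga}$, hence $\de\left\langle x\right\rangle \left\langle \xi\right\rangle ^{\ga-1}\lesssim\left\langle \xi\right\rangle ^{p}$; on $\mathrm{supp}\,\chi'(\de\left\langle x\right\rangle \left\langle \xi\right\rangle ^{\ga-p-1})$ one has $\de\left\langle x\right\rangle \approx\left\langle \xi\right\rangle ^{p+1-\ga}$, so differentiating the cutoff costs only $\left\langle \xi\right\rangle ^{-1}$. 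Running through the three building blocks of $\theta$ with these restrictions in hand, each term contributes at most $\left\langle \xi\right\rangle ^{p-1}$, uniformly in $x$, so $|\pa_{r}\theta(x,r)|\lesssim\left\langle r\right\rangle ^{p-1}\lesssim1+r$ (this is where $0<p\leq2$ enters). By the mean value theorem in $r$,
\begin{equation*}
\big|\theta(x,\xi)-\theta(x,\xi_{\ast})\big|\lesssim\big||\xi|-|\xi_{\ast}|\big|\,(1+|\xi|+|\xi_{\ast}|)=\big||\xi|-|\xi_{\ast}|\big|+\big||\xi|^{2}-|\xi_{\ast}|^{2}\big|\lesssim1+Q ,
\end{equation*}
where $Q:=|\xi-\xi_{\ast}|^{2}+|\xi-\xi_{\ast}|^{-2}\big||\xi|^{2}-|\xi_{\ast}|^{2}\big|^{2}$; the last step uses $\big||\xi|-|\xi_{\ast}|\big|\leq|\xi-\xi_{\ast}|\leq1+|\xi-\xi_{\ast}|^{2}$ together with $\big||\xi|^{2}-|\xi_{\ast}|^{2}\big|\leq\tfrac12 Q$ by the arithmetic--geometric mean inequality.

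It remains to combine this with \eqref{Est-for-k}, which gives $|k(\xi,\xi_{\ast})|\lesssim|\xi-\xi_{\ast}|^{-2}(1+|\xi|+|\xi_{\ast}|)^{\ga-1}e^{-\frac{1-\kappa}{8}Q}$ for every $0<\kappa<1$. Using $|e^{t}-1|\leq|t|\,e^{|t|}$ and the increment bound,
\begin{equation*}
|k(\xi,\xi_{\ast})|\,\big|e^{\epsilon(\theta(x,\xi)-\theta(x,\xi_{\ast}))}-1\big|\lesssim\epsilon\,|\xi-\xi_{\ast}|^{-2}(1+|\xi|+|\xi_{\ast}|)^{\ga-1}(1+Q)\,e^{\epsilon C(1+Q)}\,e^{-\frac{1-\kappa}{8}Q} ,
\end{equation*}
and for $\epsilon$ small enough (depending on $\kappa$) the factor $(1+Q)\,e^{\epsilon C(1+Q)}\,e^{-\frac{1-\kappa}{8}Q}$ is $\lesssim e^{-cQ}$ with $c=c(\kappa)>0$. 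Hence the modified kernel is $\lesssim\epsilon$ times a kernel again of the form \eqref{Est-for-k} (with $\kappa$ adjusted so that $\tfrac{1-\kappa}{8}=c$), which is precisely what was needed.

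The step I expect to be the main obstacle is the uniform-in-$x$ bound $|\pa_{r}\theta(x,r)|\lesssim\left\langle r\right\rangle ^{p-1}$: although $\theta$ is bounded neither in $\xi$ nor in $x$, the cancellations built into it through the cutoff $\chi(\de\left\langle x\right\rangle \left\langle \xi\right\rangle ^{\ga-p-1})$ must be used carefully to see that its increment is controlled by $1+Q$ --- exactly what the Gaussian factor of the kernel can absorb. Equally essential, and easy to overlook, is that $\theta$ is radial in $\xi$: without this the increment would only be bounded by $|\xi-\xi_{\ast}|\left\langle \xi\right\rangle ^{p-1}$, which is \emph{not} absorbable by the kernel when $|\xi|$ is large and $p>1$.
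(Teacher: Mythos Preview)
Your proposal is correct and follows essentially the same strategy as the paper: both arguments exploit that $\theta(x,\cdot)$ is radial in $\xi$, bound $\partial_{|\xi|}\theta$ uniformly in $x$ (the paper gets the slightly sharper $\langle\xi\rangle^{p-2}|\xi|$, yielding directly $|\theta(x,\xi)-\theta(x,\xi_\ast)|\le c_1\big||\xi|^2-|\xi_\ast|^2\big|$ rather than your $\lesssim 1+Q$, but this makes no difference once fed into the Gaussian), and then absorb the increment into the exponential decay of the kernel to recover a bounded operator with an extra factor $\epsilon$. The paper's derivation of \eqref{K-3} is also the same duality argument you sketch.
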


\begin{proof}
It suffices to show that for $j=1$ and $2,$
\begin{equation}
\left\vert e^{\epsilon \theta \left( x,\xi \right) }K_{j}e^{-\epsilon \theta
\left( x,\xi \right) }-K_{j}\right\vert _{L_{\xi }^{2}}\lesssim \epsilon \,.
\label{Kj-weighted-estimate}
\end{equation}%
By the Cauchy-Schwartz inequality,
\begin{equation}
\frac{\left( \left\vert \xi \right\vert ^{2}-\left\vert \xi _{\ast
}\right\vert ^{2}\right) ^{2}}{\left\vert \xi -\xi _{\ast }\right\vert ^{2}}%
+\left\vert \xi -\xi _{\ast }\right\vert ^{2}\geq 2\left\vert \left\vert \xi
\right\vert ^{2}-\left\vert \xi _{\ast }\right\vert ^{2}\right\vert .
\label{Ineq1}
\end{equation}%
Further, rewrite
\begin{align*}
& e^{\epsilon \theta \left( x,\xi \right) }k_{2}\left( \xi ,\xi _{\ast
}\right) e^{-\epsilon \theta \left( x,\xi _{\ast }\right) }-k_{2}\left( \xi
,\xi _{\ast }\right) \\
& =\left\{ \widetilde{a}\left( \xi ,\xi _{\ast }\right) \exp \left( -\frac{1%
}{16}\left[ \frac{\left( \left\vert \xi \right\vert ^{2}-\left\vert \xi
_{\ast }\right\vert ^{2}\right) ^{2}}{\left\vert \xi -\xi _{\ast
}\right\vert ^{2}}+\left\vert \xi -\xi _{\ast }\right\vert ^{2}\right]
\right) \right\} \\
& \times \left\{ \exp \left( -\frac{1}{16}\left[ \frac{\left( \left\vert \xi
\right\vert ^{2}-\left\vert \xi _{\ast }\right\vert ^{2}\right) ^{2}}{%
\left\vert \xi -\xi _{\ast }\right\vert ^{2}}+\left\vert \xi -\xi _{\ast
}\right\vert ^{2}\right] \right) \times \left( \exp \left\{ \epsilon \left(
\theta \left( x,\xi \right) -\theta \left( x,\xi _{\ast }\right) \right)
\right\} -1\right) \right\} \\
& \equiv p\left( \xi ,\xi _{\ast }\right) s\left( \epsilon ,\xi ,\xi _{\ast
}\right) ,
\end{align*}%
where $\widetilde{a}\left( \xi ,\xi _{\ast }\right) =a\left( \xi ,\xi _{\ast
},\frac{1}{2}\right) $. We claim that
\begin{equation*}
\sup_{\xi ,\xi _{\ast }}\left\vert s\left( \epsilon ,\xi ,\xi _{\ast
}\right) \right\vert \rightarrow 0\ \ \text{as }\epsilon \rightarrow 0.
\end{equation*}%
Since $\left\vert \frac{\partial }{\partial \left\vert \xi \right\vert }%
\theta \left( x,\xi \right) \right\vert \lesssim \left\langle \xi
\right\rangle ^{p-2}\left\vert \xi \right\vert $ uniformly in $x$ for $\xi
\neq 0$ and $p\in (0,2],$ we obtain
\begin{equation}
\left\vert \theta \left( x,\xi \right) -\theta \left( x,\xi _{\ast }\right)
\right\vert =\left\vert \theta \left( x,\left\vert \xi \right\vert \right)
-\theta \left( x,\left\vert \xi _{\ast }\right\vert \right) \right\vert
\lesssim \left\langle \xi ^{\ast }\right\rangle ^{p-2}\left\vert \xi ^{\ast
}\right\vert \left\vert |\xi |-|\xi _{\ast }|\right\vert \leq
c_{1}\left\vert \left\vert \xi \right\vert ^{2}-\left\vert \xi _{\ast
}\right\vert ^{2}\right\vert ,  \label{theta-est}
\end{equation}%
for some $|\xi ^{\ast }|$ between $|\xi |$ and $|\xi _{\ast }|,$ and some
constant $c_{1}>0$ depending only upon $\gamma $ and $p$. Together with $%
\left( \ref{Ineq1}\right) $, whenever $\epsilon >0$ is sufficiently small
with $0\leq \epsilon c_{1}<\frac{1}{16},$
\begin{equation*}
\sup_{\xi ,\xi _{\ast }}\left\vert s\left( \epsilon ,\xi ,\xi _{\ast
}\right) \right\vert \leq \epsilon c_{1}\sup_{\xi ,\xi _{\ast }}\left(
\left\vert \left\vert \xi \right\vert ^{2}-\left\vert \xi _{\ast
}\right\vert ^{2}\right\vert \exp \left[ -\frac{1}{16}\left\vert \left\vert
\xi \right\vert ^{2}-\left\vert \xi _{\ast }\right\vert ^{2}\right\vert %
\right] \right) .
\end{equation*}%
In other words,
\begin{equation*}
\sup_{\xi ,\xi _{\ast }}\left\vert s\left( \epsilon ,\xi ,\xi _{\ast
}\right) \right\vert \rightarrow 0\ \ \text{as }\epsilon \rightarrow 0.
\end{equation*}%
Since $p\left( \xi ,\xi _{\ast }\right) $ is also a kernel of a bounded
operator on $L_{\xi }^{2}$ ($L_{\sigma }^{2}$) for $0\leq \gamma <1$ ($%
-2<\gamma <0$), this completes the estimate for $K_{2}$. As to the case $%
K_{1},$ it is easy and we omit the details.

According to the above discussion, we readily obtain that for $g_{0}\in
L^{2}\left( \mu\right) ,$%
\begin{equation*}
\Vert Kg_{0}\Vert_{L^{2}\left( \mu\right) }\lesssim\Vert g_{0}\Vert
_{L^{2}\left( \mu\right) }.
\end{equation*}
Precisely,

\begin{align*}
\Vert Kg_{0}\Vert_{L^{2}\left( \mu\right) } & =\sup_{g_{1}\in L^{2}\left(
\mu\right) ,\Vert g_{1}\Vert_{L^{2}(\mu)}\leq1}\int\left( Kg_{0}\right)
g_{1}\mu dxd\xi \\
& =\sup_{g_{1}\in L^{2}\left( \mu\right) ,\Vert
g_{1}\Vert_{L^{2}(\mu)}\leq1}\int\left\langle \mu^{1/2}K\mu^{-1/2}\left(
\mu^{1/2}g_{0}\right) ,\mu^{1/2}g_{1}\right\rangle _{L_{\xi}^{2}}dx \\
& \leq C\Vert\mu^{1/2}g_{0}\Vert_{L^{2}}=C\Vert g_{0}\Vert_{L^{2}\left(
\mu\right) }.
\end{align*}
\end{proof}

We here remark that this lemma also includes the following weighted
estimate: for any $g_{0}\in L^{2}\left( e^{\alpha |\xi |^{p}}\right) $ with $%
\alpha >0$ small and $0<p\leq 2$,
\begin{equation}
\Vert Kg_{0}\Vert _{L^{2}\left( e^{\alpha |\xi |^{p}}\right) }\lesssim \Vert
g_{0}\Vert _{L^{2}\left( e^{\alpha |\xi |^{p}}\right) }.  \label{K-Ex}
\end{equation}%
Before ending this section we recall the spectrum $\mathrm{{Spec}(\eta )}$, $%
\eta \in {\mathbb{R}}$, of the operator $-i\xi \cdot \eta +L$, in
preparation for estimating the Green's function of the linearized Boltzmann
equation in the next section.

\begin{lemma}
\cite{[Ellis]}\label{pr12} Set $\eta =|\eta |\om$. For any $0\leq \ga<1$,
there exist $\de>0$ and $\tau =\tau (\de)>0$ such that

\begin{enumerate}
\item For any $|\eta |>\de$,
\begin{equation*}
\hbox{\rm{Spec}}(\eta )\subset \{z\in \mathbb{C}:Re(z)<-\tau \}\,.
\end{equation*}

\item For any $|\eta |<\de$, the spectrum within the region $\{z\in \mathbb{C%
}:Re(z)>-\tau \}$ consists of exactly five eigenvalues $\{\varrho _{j}(\eta
)\}_{j=0}^{4}$,
\begin{equation*}
\hbox{\rm{Spec}}(\eta )\cap \{z\in \mathbb{C}:Re(z)>-\tau \}=\{\varrho
_{j}(\eta )\}_{j=0}^{4}\,,
\end{equation*}%
associated with corresponding eigenvectors $\{e_{j}(\eta )\}_{j=0}^{4}$.
They have the expansions
\begin{equation*}
\begin{array}{l}
\displaystyle\varrho _{j}(\eta )=-i\,a_{j}|\eta |-A_{j}|\eta |^{2}+O(|\eta
|^{3})\,, \\
\\
\displaystyle e_{j}(\eta )=E_{j}+O(|\eta |)\,,%
\end{array}%
\end{equation*}%
with $A_{j}>0$ and
\begin{equation*}
\left\{
\begin{array}{l}
a_{0}=\sqrt{\frac{5}{3}}\,,\quad a_{1}=-\sqrt{\frac{5}{3}}\,,\quad
a_{2}=a_{3}=a_{4}=0\,, \\[2mm]
E_{0}=\sqrt{\frac{3}{10}}\chi _{0}+\sqrt{\frac{1}{2}}\om\cdot \overline{\chi
}+\sqrt{\frac{1}{5}}\chi _{4}\,, \\[2mm]
E_{1}=\sqrt{\frac{3}{10}}\chi _{0}-\sqrt{\frac{1}{2}}\om\cdot \overline{\chi
}+\sqrt{\frac{1}{5}}\chi _{4}\,, \\[2mm]
E_{2}=-\sqrt{\frac{2}{5}}\chi _{0}+\sqrt{\frac{3}{5}}\chi _{4}\,, \\[2mm]
E_{3}=\om_{1}\cdot \overline{\chi }\,, \\[2mm]
E_{4}=\om_{2}\cdot \overline{\chi }\,,%
\end{array}%
\right.
\end{equation*}%
where $\overline{\chi }=(\chi _{1},\chi _{2},\chi _{3})$, and $\{\om_{1},\om%
_{2},\om\}$ is an orthonormal basis of ${\mathbb{R}}^{3}$. Here $%
\{e_{j}(\eta )\}_{j=0}^{4}$ can be normalized by $\big<e_{j}(-\eta
),e_{l}(\eta )\big>_{\xi }=\de_{jl}$, $0\leq j,l\leq 4.$

Moreover, the semigroup $e^{(-i\xi \cdot \eta +L)t}$ can be decomposed as
\begin{equation*}
\displaystyle e^{(-i\xi \cdot \eta +L)t}g=e^{(-i\xi \cdot \eta +L)t}\Pi
_{\eta }^{\perp }g+\mathbf{1}_{\{|\eta |<\de\}}\sum_{j=0}^{4}e^{\varrho
_{j}(\eta )t}\big<e_{j}(-\eta ),g\big>_{\xi }e_{j}(\eta )\,,
\end{equation*}%
where $\mathbf{1}_{\{\cdot \}}$ is the indicator function and there exists $%
C>0$ such that
\begin{equation*}
\left\vert e^{(-i\xi \cdot \eta +L)t}\Pi _{\eta }^{\perp }g\right\vert
_{L_{\xi }^{2}}\leq e^{-Ct}|g|_{L_{\xi }^{2}}\,.
\end{equation*}
\end{enumerate}
\end{lemma}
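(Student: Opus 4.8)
This statement is the spectral decomposition of $\mathcal{A}_\eta:=-i\xi\cdot\eta+L$ quoted from \cite{[Ellis]}, so the ``proof'' is really a pointer to that reference; I record how the argument runs. Write $\mathcal{A}_\eta=B_\eta+K$, where $B_\eta g:=-(\nu(\xi)+i\xi\cdot\eta)g$ is a multiplication operator and $K$ is the integral operator of Lemma~\ref{basic}. Since $0\le\gamma<1$, the lower bound $\nu(\xi)\ge\nu_0(1+|\xi|)^\gamma\ge\nu_0>0$ from \eqref{nu-gamma} forces the spectrum of $B_\eta$ (its essential range) into $\{\operatorname{Re}z\le-\nu_0\}$, while $K$ is compact on $L_\xi^2$ with $(z-B_\eta)^{-1}K$ compact for $\operatorname{Re}z>-\nu_0$ (e.g.\ by Grad's splitting of the kernel using the bounds of Lemma~\ref{basic}). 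Analytic Fredholm theory then yields: in each half-plane $\{\operatorname{Re}z>-\nu_0+\epsilon\}$, $\operatorname{Spec}(\eta)$ consists of finitely many eigenvalues of finite algebraic multiplicity, depending continuously (indeed locally analytically) on $\eta$.

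For part (1) I would first exclude spectrum from the imaginary axis when $\eta\neq0$: if $\mathcal{A}_\eta e=ze$ with $e\neq0$, pairing with $e$ and taking real parts gives $\operatorname{Re}z\,|e|_{L_\xi^2}^2=\langle e,Le\rangle_\xi\lesssim-|\mathrm{P}_1e|_{L_\sigma^2}^2\le0$ by the coercivity in Lemma~\ref{basic}; hence $\operatorname{Re}z\le0$ always, and $\operatorname{Re}z=0$ forces $\mathrm{P}_1e=0$, i.e.\ $e=\sum_jc_j\chi_j\in\mathrm{Ker}(L)$, so the equation degenerates to $(\xi\cdot\eta)e=iz\,e$ pointwise, which is impossible for $\eta\neq0$ since a nonzero element of $\mathrm{Ker}(L)$ cannot vanish on a hyperplane. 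To upgrade this to a \emph{uniform} gap on $\{|\eta|\ge\delta\}$, run the Ellis--Pinsky normal-families argument: given eigentriples $(\eta_k,z_k,e_k)$ with $|\eta_k|\ge\delta$, $|e_k|_{L_\xi^2}=1$, $\operatorname{Re}z_k\to0^-$, from $e_k=-B_{\eta_k}^{-1}Ke_k$ (with $\|B_{\eta_k}^{-1}\|\le(\nu_0-|\operatorname{Re}z_k|)^{-1}$ bounded), the a priori bound $|e_k|_{L_\sigma^2}\lesssim1$, and compactness of $K$, one extracts a strong $L_\xi^2$-limit $e_\infty$; if $|\eta_k|\to\infty$ then $B_{\eta_k}^{-1}\to0$ strongly on fixed functions, forcing $e_\infty=0$, a contradiction; if $\eta_k\to\eta_\infty$ with $|\eta_\infty|\ge\delta$, then $e_\infty\neq0$ is an eigenfunction of $\mathcal{A}_{\eta_\infty}$ with eigenvalue on the imaginary axis, contradicting the previous step. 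This yields $\tau>0$ with $\operatorname{Spec}(\eta)\subset\{\operatorname{Re}z<-\tau\}$ for $|\eta|\ge\delta$.

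For part (2) I would use Kato's analytic perturbation theory (type-A family) in the parameter $|\eta|$ around $\mathcal{A}_0=L$, which is self-adjoint with $0$ an isolated eigenvalue of multiplicity $5$, eigenspace $\mathrm{Ker}(L)=\mathrm{span}\{\chi_j\}$, and a spectral gap. For $|\eta|$ small the Riesz projection $P(\eta)$ onto the part of $\operatorname{Spec}(\eta)$ near $0$ has rank $5$ and is analytic in $|\eta|$, and $P(\eta)\mathcal{A}_\eta P(\eta)$ is a $5\times5$ matrix analytic in $|\eta|$ whose leading symbol is $\mathrm{P}_0(-i\,\xi\cdot\om)\mathrm{P}_0$ on $\mathrm{Ker}(L)$, i.e.\ $-i$ times the real symmetric acoustic matrix, whose eigenpairs are exactly the $(a_j,E_j)$ listed (two acoustic modes $\pm\sqrt{5/3}$, a triple $0$). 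Hence $\varrho_j(\eta)=-ia_j|\eta|-A_j|\eta|^2+O(|\eta|^3)$ and $e_j(\eta)=E_j+O(|\eta|)$, where the second-order coefficient is (a diagonal entry of) $\mathrm{P}_0(\xi\cdot\om)\big(L|_{\mathrm{Ran}\,\mathrm{P}_1}\big)^{-1}(\xi\cdot\om)\mathrm{P}_0$; since $L$ is negative definite on $\mathrm{Ran}\,\mathrm{P}_1$ and $(\xi\cdot\om)E_j\notin\mathrm{Ker}(L)$, this gives $A_j>0$. The bi-orthonormalization $\langle e_j(-\eta),e_l(\eta)\rangle_\xi=\de_{jl}$ follows from $\langle\mathcal{A}_\eta f,g\rangle_\xi=\langle f,\mathcal{A}_{-\eta}g\rangle_\xi$, so $\{e_l(\eta)\}$ and $\{e_j(-\eta)\}$ are dual eigenbases of the (leading-order non-degenerate) reduced matrices. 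Shrinking $\delta$ so the two $\delta$'s agree, parts (1)--(2) combine: for $|\eta|<\delta$ the only spectrum in $\{\operatorname{Re}z>-\tau\}$ is these five eigenvalues.

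Finally, for part (3) I would represent $e^{t\mathcal{A}_\eta}=\frac1{2\pi i}\int_\Gamma e^{tz}(z-\mathcal{A}_\eta)^{-1}\,dz$ and push $\Gamma$ past $\{\operatorname{Re}z=-\tau\}$. For $|\eta|\ge\delta$ nothing is crossed; for $|\eta|<\delta$ one picks up exactly the residues at $\varrho_0,\dots,\varrho_4$, which — because of the bi-orthonormal basis — equal the rank-one terms $e^{\varrho_j(\eta)t}\langle e_j(-\eta),g\rangle_\xi e_j(\eta)$, leaving $e^{t\mathcal{A}_\eta}\Pi_\eta^\perp$ with $\Pi_\eta^\perp:=\mathrm{Id}-\mathbf{1}_{\{|\eta|<\delta\}}\sum_je_j(\eta)\langle e_j(-\eta),\cdot\rangle_\xi$; in either case the remaining integrand lives in $\{\operatorname{Re}z<-\tau\}$ and the uniform resolvent bound there gives $|e^{t\mathcal{A}_\eta}\Pi_\eta^\perp g|_{L_\xi^2}\lesssim e^{-\tau t}|g|_{L_\xi^2}$, the prefactor being absorbed into the exponent by a standard argument (Gearhart--Pr\"uss, or re-deforming to $\{\operatorname{Re}z=-\tau'\}$, $0<\tau'<\tau$, where the resolvent is still bounded), giving $|e^{t\mathcal{A}_\eta}\Pi_\eta^\perp g|_{L_\xi^2}\le e^{-Ct}|g|_{L_\xi^2}$. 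The main obstacle in this whole scheme is the \emph{uniformity in $\eta$}: the Ellis--Pinsky compactness argument for large $|\eta|$ in step (1), and the strict positivity $A_j>0$ of the dissipation coefficients in step (2); the rest is bookkeeping.
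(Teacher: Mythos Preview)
Your proposal is correct, and in fact goes well beyond what the paper does: the paper offers no proof of this lemma whatsoever, simply citing it from Ellis--Pinsky \cite{[Ellis]} as a known result. Your sketch of the underlying argument---Weyl-type essential spectrum localization via the compactness of $K$, analytic Fredholm theory, the coercivity-plus-compactness argument to exclude imaginary-axis spectrum and obtain a uniform gap for $|\eta|\ge\delta$, Kato's analytic perturbation theory near $\eta=0$ to get the five branches with the explicit acoustic matrix $\mathrm{P}_0(\xi\cdot\omega)\mathrm{P}_0$ at leading order, and the contour-integral representation of the semigroup---is an accurate outline of how this is actually established in the Ellis--Pinsky line of work.
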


\section{Long wave-short wave decomposition}

\label{LS}

In order to study the large time behavior, we introduce the long wave-short
wave decomposition. By the Fourier transform, the solution of the linearized
Boltzmann equation can be written as
\begin{equation}
\displaystyle\mathbb{G}^{t}f_{0}=f(t,x,\xi )=\int_{{\mathbb{R}}^{3}}e^{i\eta
x+(-i\xi \cdot \eta +L)t}\widehat{f}_{0}(\eta ,\xi )d\eta \,,
\end{equation}%
where $\widehat{f}$ means the Fourier transform in the space variable and $%
\mathbb{G}^{t}$ is the solution operator (or Green's function) of the
linearized Boltzmann equation. We can decompose the solution $f$ into the
long wave part $f_{L}$ and the short wave part $f_{S}$ given respectively by
\begin{equation}
\begin{array}{l}
\label{bot.2.e}\displaystyle f_{L}=\int_{|\eta |<\de}e^{i\eta x+(-i\xi \cdot
\eta +L)t}\widehat{f}_{0}(\eta ,\xi )d\eta \,, \\
\\
\displaystyle f_{S}=\int_{|\eta |>\de}e^{i\eta x+(-i\xi \cdot \eta +L)t}%
\widehat{f}_{0}(\eta ,\xi )d\eta \,.%
\end{array}%
\end{equation}

For the case $0\leq \gamma <1$, we further decompose the long wave part as
the fluid part and non-fluid part, i.e., $f_{L}=f_{L;0}+f_{L;\perp }$, where
\begin{equation}
\begin{array}{l}
\label{bot.2.g}\displaystyle f_{L;0}=\int_{|\eta |<\de}\sum_{j=0}^{4}e^{%
\varrho _{j}(\eta )t}e^{i\eta x}\big<e_{j}(-\eta ),\hat{f_{0}}\big>_{\xi
}e_{j}(\eta )d\eta \,, \\
\\
\displaystyle f_{L;\perp }=\int_{|\eta |<\de}e^{i\eta x}e^{(-i\xi \cdot \eta
+L)t}\Pi _{\eta }^{\perp }\hat{f}_{0}d\eta \,.%
\end{array}%
\end{equation}%
Taking advantage of the spectrum information of the Boltzmann collision
operator (Lemma \ref{pr12}), we will obtain the $L^{2}$ estimates of the
non-fluid long wave part and short wave part directly. On the other hand,
the Fourier multiplier techniques can be applied to obtain the pointwise
structure of the fluid part. The estimate of this part is exactly the same
as in the Landau case \cite{[LWW1]} and hence we omit the details.

\begin{proposition}
\label{LS-estimate1}Let $0\leq \gamma <1$ and let $f$ be the solution of the
linearized Boltzmann equation.

\textrm{(a) (Fluid wave $f_{L;0}$)} Let $\mathbf{v}=\sqrt{5/3}$ be the sound
speed associated with the normalized global Maxwellian. For any given
positive integer $N$ and any given Mach number $\mathbb{M}>1$, there exists $%
C_{N}>0$ such that if $\left\vert x\right\vert \leq \left( \mathbb{M}%
+1\right) \mathbf{v}t$, then
\begin{align}
\left\vert f_{L;0}\right\vert _{L_{\xi }^{2}}& \leq C_{N}\left[ \left(
1+t\right) ^{-2}\left( 1+\frac{\left( \left\vert x\right\vert -\mathbf{v}%
t\right) ^{2}}{1+t}\right) ^{-N}+\left( 1+t\right) ^{-3/2}\left( 1+\frac{%
\left\vert x\right\vert ^{2}}{1+t}\right) ^{-N}\right. \\
& \quad \left. +\boldsymbol{1}_{\left\{ \left\vert x\right\vert \leq \mathbf{%
v}t\right\} }\left( 1+t\right) ^{-3/2}\left( 1+\frac{\left\vert x\right\vert
^{2}}{1+t}\right) ^{-3/2}\right] \left\Vert f_{0}\right\Vert
_{L_{x}^{1}L_{\xi }^{2}}.  \notag
\end{align}

\textrm{(b) (Non-fluid long wave $f_{L;\perp }$)} There exists a constant $%
c>0$ such that
\begin{equation}
\Vert f_{L;\perp }\Vert _{H_{x}^{s}L_{\xi }^{2}}\lesssim e^{-ct}\Vert
f_{0}\Vert _{L^{2}}  \label{bot.2.h}
\end{equation}%
for any $s>0$.

\textrm{(c) (Short wave $f_{S}$)} There exists a constant $c>0$ such that
\begin{equation}
\Vert f_{S}\Vert_{L^{2}}\lesssim e^{-ct}\Vert f_{0}\Vert_{L^{2}}\,.
\label{bot.2.f}
\end{equation}
\end{proposition}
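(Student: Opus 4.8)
The three estimates all rest on the same two ingredients: Plancherel's identity in the space variable, and the spectral decomposition of the semigroup $e^{(-i\xi\cdot\eta+L)t}$ provided by Lemma \ref{pr12}. The only genuinely delicate part is the pointwise fluid structure in (a), which I would obtain exactly as in the Landau case \cite{[LWW1]} and hence only sketch. The plan for (a) is to substitute the expansions $\varrho_j(\eta)=-ia_j|\eta|-A_j|\eta|^2+O(|\eta|^3)$ and $e_j(\eta)=E_j+O(|\eta|)$ into
\[
f_{L;0}=\int_{|\eta|<\de}\sum_{j=0}^4 e^{\varrho_j(\eta)t}e^{i\eta x}\big<e_j(-\eta),\widehat f_0\big>_\xi e_j(\eta)\,d\eta ,
\]
and to treat each summand as a Fourier multiplier acting on $\widehat f_0$. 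The phase $e^{i\eta\cdot x-ia_j|\eta|t-A_j|\eta|^2t}$ with $a_0=\mathbf v$ and $a_1=-\mathbf v$ yields the two sound waves concentrated near $|x|=\mathbf v t$ (the first, $(1+t)^{-2}$, term with the $(|x|-\mathbf vt)^2/(1+t)$ profile), the modes $a_2=a_3=a_4=0$ yield the purely diffusive Gaussian centred at $x=0$ (the $(1+t)^{-3/2}$ term), and the Huygens-type contribution $\boldsymbol 1_{\{|x|\le\mathbf vt\}}(1+t)^{-3/2}(1+|x|^2/(1+t))^{-3/2}$ arises from the interaction of the propagating and diffusive factors inside the cone $|x|\le(\mathbb M+1)\mathbf v t$. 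The claimed bounds then follow from the standard Fourier-multiplier/stationary-phase estimates, integrating by parts in $\eta$ a number of times fixed by $N$ and controlling the $O(|\eta|^3)$, $O(|\eta|)$ remainders as faster-decaying corrections; one closes using $|\widehat f_0(\eta,\cdot)|_{L^2_\xi}\le\|f_0\|_{L^1_xL^2_\xi}$. As this is carried out in detail in \cite{[LWW1]}, I would simply invoke it.

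For parts (b) and (c) I would argue directly. By Lemma \ref{pr12}, for $|\eta|\ge\de$ there are no eigenvalues in $\{\mathrm{Re}\,z>-\tau\}$, so the semigroup decomposition reduces to $e^{(-i\xi\cdot\eta+L)t}=e^{(-i\xi\cdot\eta+L)t}\Pi_\eta^\perp$ on that region, and in all cases
\[
\big|e^{(-i\xi\cdot\eta+L)t}\Pi_\eta^\perp g\big|_{L^2_\xi}\le e^{-Ct}|g|_{L^2_\xi}.
\]
Hence, by Plancherel in $x$ (noting $\widehat{\pa_x^\al f_{L;\perp}}=(i\eta)^\al\boldsymbol 1_{\{|\eta|<\de\}}e^{(-i\xi\cdot\eta+L)t}\Pi_\eta^\perp\widehat f_0$) and the fact that the polynomial weight $|\eta|^{2|\al|}$ is bounded by $\de^{2|\al|}$ on $\{|\eta|<\de\}$,
\[
\|\pa_x^\al f_{L;\perp}\|_{L^2}^2=\int_{|\eta|<\de}|\eta|^{2|\al|}\big|e^{(-i\xi\cdot\eta+L)t}\Pi_\eta^\perp\widehat f_0\big|_{L^2_\xi}^2\,d\eta\le\de^{2|\al|}e^{-2Ct}\int_{|\eta|<\de}|\widehat f_0|_{L^2_\xi}^2\,d\eta\lesssim e^{-2Ct}\|f_0\|_{L^2}^2 ,
\]
for every multi-index $\al$; summing over $|\al|\le s$ gives (b). In the same way,
\[
\|f_S\|_{L^2}^2=\int_{|\eta|>\de}\big|e^{(-i\xi\cdot\eta+L)t}\widehat f_0\big|_{L^2_\xi}^2\,d\eta=\int_{|\eta|>\de}\big|e^{(-i\xi\cdot\eta+L)t}\Pi_\eta^\perp\widehat f_0\big|_{L^2_\xi}^2\,d\eta\le e^{-2Ct}\|f_0\|_{L^2}^2 ,
\]
which is (c). Neither (b) nor (c) uses the compact support of $f_0$; only $f_0\in L^2$ is needed, and the constants are uniform in $s$ up to the factor $\de^{s}$ hidden in $\lesssim$.

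The one real obstacle is therefore part (a): one must track the two sub-regions $|x|\lesssim\mathbf v t$ and $|x|\sim\mathbf v t$ separately, and calibrate the number of integrations by parts in $\eta$ so that the decay exponents $(1+t)^{-2}$, $(1+t)^{-3/2}$ and the spatial profiles of order $N$ (resp.\ $3/2$ for the Huygens term) come out sharp, while showing that the curvature corrections $O(|\eta|^3)$ in the eigenvalues and $O(|\eta|)$ in the eigenprojections produce only strictly lower-order contributions. Since this multiplier analysis is identical to the one already performed for the linearized Landau equation in \cite{[LWW1]}, I would present (a) by reference and give the short self-contained Plancherel arguments for (b) and (c) as above.
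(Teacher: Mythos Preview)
Your proposal is correct and follows essentially the same approach as the paper: for part (a) the paper itself omits the details and refers to \cite{[LWW1]} exactly as you do, while for parts (b) and (c) the paper simply says they follow ``directly'' from the spectral information in Lemma~\ref{pr12}, and your Plancherel arguments are precisely the way to make that remark explicit.
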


Alternatively, for $-2<\gamma <0$, the spectrum information is missing due
to the weak damping for large velocity. Instead, we use similar arguments as
those in the papers by Kawashima \cite{[Kawashima]}, Strain \cite{[Strain]}
and Strain-Guo \cite{[Strain-Guo]} to get optimal time decay. All related
estimates have been done in \cite{[Strain-Guo]} and thereby we simply sketch
the proof.

\begin{proposition}
\label{LS-estimate2}Let $-2<\gamma <0$ and let $f$ be the solution of the
linearized Boltzmann equation. For $0<p\leq 2$ and $\alpha >0$ small, we have

\textrm{(a) (Long wave $f_{L}$)}
\begin{align}  \label{f_{L}soft}
\|f_{L}\|_{L^{\infty}_{x}L^{2}_{\xi}}\lesssim (1+t)^{-\frac{3}{2}}\left\Vert
f_{0}\right\Vert _{L_{x}^{1}L_{\xi}^{2}(e^{\alpha|\xi|^{p}})}\,.
\end{align}

\textrm{(b) (Short wave $f_{S}$)} There exists $c_{p,\gamma}>0$ such that
\begin{equation}  \label{f_{S}soft}
\left\Vert f_{S}\right\Vert _{L^{2}}\lesssim e^{-c_{p,\gamma}\alpha^{\frac{%
-\gamma}{p-\gamma}} t^{\frac{p}{p-\gamma}}}\left\Vert f_{0}\right\Vert
_{L^{2}(e^{\alpha|\xi|^{p}})}\,.
\end{equation}
\end{proposition}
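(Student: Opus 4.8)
The statement to prove is Proposition \ref{LS-estimate2}, the long wave–short wave estimates for soft potentials $-2<\gamma<0$. Since the excerpt tells us ``all related estimates have been done in \cite{[Strain-Guo]}'', I expect the proof to be a sketch that adapts the time-decay machinery of Kawashima/Strain/Strain-Guo to the Fourier-localized pieces $f_L$ and $f_S$. First I would set up the Fourier-side weighted energy framework: work with $\widehat{f}(t,\eta,\xi)$, which solves $\pa_t\widehat f = (-i\xi\cdot\eta + L)\widehat f$, and introduce the velocity weight $w = e^{\alpha|\xi|^p}$. The key structural input is the coercivity of $L$ from Lemma \ref{basic}, $\langle g, Lg\rangle_\xi \lesssim -|\mathrm P_1 g|_{L^2_\sigma}^2$, together with the fact that for soft potentials the dissipation $|\mathrm P_1 g|_{L^2_\sigma}^2 = |\langle\xi\rangle^{\gamma/2}\mathrm P_1 g|_{L^2_\xi}^2$ degenerates for large $|\xi|$ — this is precisely why a plain spectral gap is unavailable and why the exponential velocity weight on the data is needed.

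\textbf{Short wave $f_S$ (part (b)).} For $|\eta|>\delta$ I would perform a weighted energy estimate on $\widehat f$: multiply the equation by $\overline{\widehat f}\, w^2$ and integrate in $\xi$. The term $\langle w\widehat f, w\, i\xi\cdot\eta\, \widehat f\rangle_\xi$ is purely imaginary and drops; the collision term produces $-\langle w\widehat f, w\nu \widehat f\rangle + \langle w\widehat f, wK\widehat f\rangle$, and by \eqref{K-Ex} the weighted $K$ is bounded, so one gets damping $-\nu_0|\langle\xi\rangle^{\gamma/2} w\widehat f|^2_{L^2_\xi}$ modulo a lower-order term controlled by the macroscopic/low-frequency part, which for $|\eta|>\delta$ is itself controlled. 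To convert this degenerate dissipation into sub-exponential decay, I would use the standard interpolation/splitting in $|\xi|$: on $|\xi|\le R(t)$ the weight $\langle\xi\rangle^\gamma$ is $\gtrsim \langle R(t)\rangle^\gamma$, so the dissipation controls $\langle R(t)\rangle^\gamma$ times the energy there; on $|\xi|\ge R(t)$ one pays $e^{-\alpha R(t)^p}$ from an extra power of the weight sitting in the data norm $\|f_0\|_{L^2(e^{\alpha|\xi|^p})}$ relative to the energy weight. Optimizing $R(t)$ in the resulting differential inequality $\frac{d}{dt}\mathcal E \lesssim -\langle R(t)\rangle^\gamma \mathcal E + (\text{exp. small in } R(t))$ — balancing $\langle R\rangle^\gamma t \sim \alpha R^p$, i.e. $R \sim (\alpha^{-1} t)^{1/(p-\gamma)}$ — yields exactly the rate $e^{-c_{p,\gamma}\alpha^{-\gamma/(p-\gamma)} t^{p/(p-\gamma)}}$. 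Integrating over $|\eta|>\delta$ and using Plancherel gives \eqref{f_{S}soft}.

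\textbf{Long wave $f_L$ (part (a)).} For $|\eta|<\delta$ I would again run a weighted Fourier-energy estimate but now track the $\eta$-dependence carefully, since for small frequencies the macroscopic part is only weakly damped — the dissipation gains a factor $|\eta|^2$ through the interplay of transport and collision (the Kawashima-type compensating functional, or equivalently the low-frequency eigenvalue expansion $\mathrm{Re}\,\varrho_j(\eta)\sim -A_j|\eta|^2$ when it is available; for soft potentials one uses the Kawashima functional directly). This produces a frequency-weighted decay $|\widehat f(t,\eta)|^2_{L^2_\xi} \lesssim e^{-c|\eta|^2 t/(1+\cdots)}|\widehat f_0(\eta)|^2$, again with the $\langle\xi\rangle^\gamma$-degeneracy handled as above (this is where the $e^{\alpha|\xi|^p}$ weight in the $L^1_x L^2_\xi$ data norm enters). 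Then $\|f_L\|_{L^\infty_x L^2_\xi} \le \int_{|\eta|<\delta}|\widehat f(t,\eta)|_{L^2_\xi}\,d\eta \lesssim \|\widehat f_0\|_{L^\infty_\eta L^2_\xi}\int_{|\eta|<\delta} e^{-c|\eta|^2 t}\,d\eta \lesssim (1+t)^{-3/2}\|f_0\|_{L^1_x L^2_\xi(e^{\alpha|\xi|^p})}$, using $\|\widehat f_0\|_{L^\infty_\eta L^2_\xi}\le \|f_0\|_{L^1_x L^2_\xi}$ and the three-dimensional Gaussian integral. This gives \eqref{f_{L}soft}.

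\textbf{Main obstacle.} The delicate point is the bookkeeping of the degenerate soft-potential dissipation: one must simultaneously (i) absorb the weighted error from $K$ via \eqref{K-Ex}, (ii) choose the splitting radius $R(t)$ and spend exactly one layer of the $e^{\alpha|\xi|^p}$ weight to dominate the high-velocity tail, and (iii) for the long wave, keep the $|\eta|^2$ gain uniform in $\eta$ on $|\eta|<\delta$ despite the $\xi$-degeneracy — the Kawashima compensating-functional construction must be carried out with the weight present. Since this is declared to be essentially contained in \cite{[Strain-Guo]} (and Kawashima \cite{[Kawashima]}, Strain \cite{[Strain]}), I would present the argument as a short adaptation, citing those works for the core energy inequalities and only spelling out how the velocity weight $e^{\alpha|\xi|^p}$ propagates and produces the stated sub-exponential rate and the factor in the data norm.
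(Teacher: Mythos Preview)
Your proposal is correct and follows essentially the same Kawashima/Strain/Strain--Guo machinery the paper invokes: an unweighted time-frequency functional $\mathcal E(t,\eta)\approx|\widehat f|_{L^2_\xi}^2$ with dissipation $\sigma\min\{1,|\eta|^2\}|\widehat f|_{L^2_\sigma}^2$, a weighted functional $\mathcal E_{\alpha,p}(t,\eta)\approx|e^{\alpha|\xi|^p/2}\widehat f|_{L^2_\xi}^2$ that is merely nonincreasing, and the velocity-splitting interpolation at radius $R(t)$ that converts the $\langle\xi\rangle^\gamma$-degenerate dissipation into the sub-exponential rate $e^{-c\,\alpha^{-\gamma/(p-\gamma)}t^{p/(p-\gamma)}}$ for the short wave. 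One organizational difference: for part~(a) the paper first derives $\|f_L\|_{H^k_xL^2_\xi}\lesssim (1+t)^{-3/4-k/2}\|f_0\|_{L^1_xL^2_\xi(e^{\alpha|\xi|^p})}$ and then applies the Sobolev inequality, whereas you bound $\|f_L\|_{L^\infty_xL^2_\xi}$ directly by $\int_{|\eta|<\delta}|\widehat f(t,\eta)|_{L^2_\xi}\,d\eta$ and integrate the Gaussian in $\eta$; both routes are valid and yield the same $(1+t)^{-3/2}$.
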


\begin{proof}
Following the same argument as in \cite{[Strain]}, we find that there exists
a time-frequency functional $\mathcal{E}\left( t,\eta \right) $ such that
\begin{equation}
\mathcal{E}\left( t,\eta \right) \approx \left\vert \widehat{f}\left( t,\eta
\right) \right\vert _{L_{\xi }^{2}}^{2},
\end{equation}%
where for any $t>0$ and $\eta \in \mathbb{R}^{3}$, we have
\begin{equation}
\partial _{t}\mathcal{E}\left( t,\eta \right) +\sigma \widehat{\rho }\left(
\eta \right) \left\vert \widehat{f}\left( t,\eta \right) \right\vert
_{L_{\sigma }^{2}}^{2}\leq 0,  \label{E-N}
\end{equation}%
for some constant $\sigma >0.$ Here the notation $\widehat{\rho }\left( \eta
\right) =\min \{1,\left\vert \eta \right\vert ^{2}\}.$ Moreover, there
exists a weighted time-frequency functional $\mathcal{E}_{\alpha ,p}\left(
t,\eta \right) $ such that
\begin{equation}
\mathcal{E}_{\alpha ,p}\left( t,\eta \right) \approx \left\vert e^{\frac{%
\alpha |\xi |^{p}}{2}}\widehat{f}\left( t,\eta \right) \right\vert _{L_{\xi
}^{2}}^{2},
\end{equation}%
where for any $t>0$ and $\eta \in \mathbb{R}^{3}$, we have
\begin{equation}
\partial _{t}\mathcal{E}_{\alpha ,p}\left( t,\eta \right) \leq 0.  \label{EL}
\end{equation}

For the long wave part, the argument basically follows the paper \cite%
{[Strain]}. In fact, by (\ref{E-N}) and (\ref{EL}), we have
\begin{equation}
\Vert f_{L}\Vert _{H_{x}^{k}L_{\xi }^{2}}\lesssim (1+t)^{-\frac{3}{4}-\frac{k%
}{2}}\left\Vert f_{0}\right\Vert _{L_{x}^{1}L_{\xi }^{2}(e^{\alpha |\xi
|^{p}})}\,.  \label{fL}
\end{equation}%
With the aid of the Sobolev inequality, (\ref{f_{L}soft}) holds. For the
short wave $f_{S}$, applying the same argument as in Section 5 of \cite%
{[Strain-Guo]}, together with (\ref{E-N}) and (\ref{EL}), we get (\ref%
{f_{S}soft}).
\end{proof}

\section{Wave-remainder decomposition}

\label{WR}

In this section we introduce the wave-remainder decomposition, which is the
key decomposition in our paper. The strategy is to design a Picard-type
iteration, treating $Kf$ as a source term. Specifically, the zero order
approximation $h^{(0)}$ of the linearized Boltzmann equation (\ref{bot.1.d})
is defined as%
\begin{equation}
\left\{
\begin{array}{l}
\pa_{t}h^{(0)}+\xi \cdot \nabla _{x}h^{(0)}+\nu (\xi )h^{(0)}=0\,, \\[4mm]
h^{(0)}(0,x,\xi )=f_{0}(x,\xi )\,,%
\end{array}%
\right.  \label{bot.3.b}
\end{equation}%
and thus the difference $f-h^{(0)}$ satisfies
\begin{equation*}
\left\{
\begin{array}{l}
\pa_{t}(f-h^{(0)})+\xi \cdot \nabla _{x}(f-h^{(0)})+\nu (\xi
)(f-h^{(0)})=K(f-h^{(0)})+Kh^{(0)}\,, \\[4mm]
(f-h^{(0)})(0,x,\xi )=0\,.%
\end{array}%
\right.
\end{equation*}%
Therefore, the first order approximation $h^{(1)}$ can be defined as
\begin{equation}
\left\{
\begin{array}{l}
\pa_{t}h^{(1)}+\xi \cdot \nabla _{x}h^{(1)}+\nu (\xi )h^{(1)}=Kh^{(0)}\,, \\%
[4mm]
h^{(1)}(0,x,\xi )=0\,.%
\end{array}%
\right.  \label{bot.3.c}
\end{equation}%
In general, we can define the $j^{\mathrm{th}}$ order approximation $h^{(j)}$%
, $j\geq 1$, as
\begin{equation}
\left\{
\begin{array}{l}
\pa_{t}h^{(j)}+\xi \cdot \nabla _{x}h^{(j)}+\nu (\xi )h^{(j)}=Kh^{(j-1)}\,,
\\[4mm]
h^{(j)}(0,x,\xi )=0\,.%
\end{array}%
\right.  \label{bot.3.d}
\end{equation}%
Now, the wave part and the remainder part can be defined as follows:
\begin{equation}
W^{(6)}=\sum_{j=0}^{6}h^{(j)}\,,\quad \mathcal{R}^{(6)}=f-W^{(6)}\,,
\label{wave-remainder}
\end{equation}%
$\mathcal{R}^{(6)}$ solving the equation
\begin{equation}
\left\{
\begin{array}{l}
\pa_{t}\mathcal{R}^{(6)}+\xi \cdot \nabla _{x}\mathcal{R}^{(6)}=L\mathcal{R}%
^{(6)}+Kh^{(6)}\,, \\[4mm]
\mathcal{R}^{(6)}(0,x,\xi )=0\,.%
\end{array}%
\right.
\end{equation}%
In fact, $\mathcal{R}^{(6)}$ can be solved by using Green's function $%
\mathbb{G}^{t}$ for the full linearized Boltzmann equation, namely
\begin{equation}
\mathcal{R}^{(6)}=\int_{0}^{t}\mathbb{G}^{t-s}Kh^{(6)}(s)ds.  \label{R^(6)}
\end{equation}

\subsection{Estimates on the wave part}

We denote the solution operator of the damped transport equation
\begin{equation}
\left\{
\begin{array}{l}
\pa_{t}h+\xi \cdot \nabla _{x}h+\nu (\xi )h=0\,, \\[4mm]
h(0,x,\xi )=h_{0}\,,%
\end{array}%
\right.
\end{equation}%
by $\mathbb{S}^{t}$, i.e., $h(t)=\mathbb{S}^{t}h_{0}$. By method of
characteristics, the solution $\mathbb{S}^{t}h_{0}$ can be written down
explicitly; that is,
\begin{equation}
\mathbb{S}^{t}h_{0}(x,\xi )=h(t,x,\xi )=e^{-\nu (\xi )t}h_{0}(x-\xi t,\xi ).
\label{S^t}
\end{equation}%
In addition, it is easy to see that $h^{(j)}$ can be represented by the
combination of operators $\mathbb{S}^{t}$ and $K$.

In the sequel, we will find the pointwise decay of the solution $\mathbb{S}%
^{t}h_{0}$ in both time variable $t$ and space variable $x$ upon imposing
some weights on velocity variable $\xi $. Through the pointwise decay of the
solution $\mathbb{S}^{t}h_{0}$ and Duhamel's principle, we thereby obtain
the pointwise estimate of the wave part $W^{(6)}$. Moreover, we will provide
the $L^{2}$ estimate for $\mathbb{S}^{t}h_{0}$ with an exponential weight as
well, which leads us to obtain the $L^{2}$ estimates for $h^{(j)}$ $(0\leq
j\leq 6)$ and $\mathcal{R}^{(6)}.$

\begin{lemma}
\label{S^t-Sup} Let $\alpha >0$, $0<p\leq 2$ and $\beta>3/2$. Then for $%
0\leq \gamma <1$,
\begin{equation}
\left\vert \mathbb{S}^{t}h_{0}\right\vert _{L_{\xi }^{\infty
}\left(\left<\xi\right>^{\beta}\right)}\leq \sup_{y}e^{-c_{0}\left( t+\alpha
^{\frac{1-\gamma }{p+1-\gamma }}\left\vert x-y\right\vert ^{\frac{p}{%
p+1-\gamma }}\right) }\left\vert h_{0}\left( y,\cdot \right) \right\vert
_{L_{\xi }^{\infty }\left( e^{\alpha \left\vert \xi \right\vert
^{p}}\left<\xi\right>^{\beta}\right) }\,,  \label{S^t-Sup-H}
\end{equation}%
and for $-2<\gamma <0$,
\begin{equation}
\left\vert \mathbb{S}^{t}h_{0}\right\vert _{L_{\xi }^{\infty
}\left(\left<\xi\right>^{\beta}\right)}\leq C\left( \alpha ,\gamma \right)
\sup_{y}e^{-c_{0}\left( \alpha ^{\frac{-\gamma }{p-\gamma }}t^{\frac{p}{%
p-\gamma }}+\alpha ^{\frac{1-\gamma }{p+1-\gamma }}\left\vert x-y\right\vert
^{\frac{p}{p+1-\gamma }}\right) }\left\vert h_{0}\left( y,\cdot \right)
\right\vert _{L_{\xi }^{\infty }\left( e^{\alpha \left\vert \xi \right\vert
^{p}}\left<\xi\right>^{\beta}\right) }\,,  \label{S^t-Sup-S}
\end{equation}%
where $c_{0}=c\left( \gamma \right) >0$ and $C\left( \alpha ,\gamma \right)
>0$ are constants.
\end{lemma}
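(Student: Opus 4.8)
The plan is to start from the explicit representation \eqref{S^t}, $\mathbb{S}^{t}h_{0}(x,\xi)=e^{-\nu(\xi)t}h_{0}(x-\xi t,\xi)$, which diagonalizes the problem in $\xi$. Fix $x$ and $t>0$, and for each $\xi$ set $y=x-\xi t$, so that $|x-y|=|\xi|t$. Then
\[
|\mathbb{S}^{t}h_{0}(x,\xi)|\langle\xi\rangle^{\beta}
= e^{-\nu(\xi)t}|h_{0}(y,\xi)|\langle\xi\rangle^{\beta}
\le e^{-\nu(\xi)t-\alpha|\xi|^{p}}\,\big|h_{0}(y,\cdot)\big|_{L_{\xi}^{\infty}(e^{\alpha|\xi|^{p}}\langle\xi\rangle^{\beta})}.
\]
Bounding the last norm by its supremum over $y$ (as $\xi$ varies, $y$ ranges over all of $\mathbb{R}^{3}$; the case $t=0$ is trivial), and then taking the supremum over $\xi$ on the left, the whole lemma reduces to the scalar inequality
\[
e^{-\nu(\xi)t-\alpha|\xi|^{p}}\le C(\alpha,\gamma)\,e^{-c_{0}\Theta(t,\,|\xi|t)},
\]
where $\Theta$ denotes the exponent appearing in the statement and $C(\alpha,\gamma)\equiv1$ when $0\le\gamma<1$. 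Note that the polynomial weight $\langle\xi\rangle^{\beta}$ simply cancels and plays no role, so the hypothesis $\beta>3/2$ is not actually used in this lemma.

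For $0\le\gamma<1$ I would invoke \eqref{nu-gamma}, which gives $\nu(\xi)\ge\nu_{0}(1+|\xi|)^{\gamma}\ge\nu_{0}\max\{1,|\xi|^{\gamma}\}$, and reserve half of $\nu(\xi)t$ to produce the $e^{-c_{0}t}$ factor. To the remaining quantity $\tfrac12\nu_{0}|\xi|^{\gamma}t+\alpha|\xi|^{p}$ I apply the weighted arithmetic--geometric mean inequality $a^{\lambda}b^{1-\lambda}\le\lambda a+(1-\lambda)b$ with $\lambda=\tfrac{p}{p+1-\gamma}$; this choice is the crux, since it is the unique weight for which the two powers of $|\xi|$ combine to exactly $\tfrac{p}{p+1-\gamma}$, yielding
\[
\big(|\xi|^{\gamma}t\big)^{\lambda}\big(\alpha|\xi|^{p}\big)^{1-\lambda}
=\alpha^{\frac{1-\gamma}{p+1-\gamma}}\big(|\xi|t\big)^{\frac{p}{p+1-\gamma}}
=\alpha^{\frac{1-\gamma}{p+1-\gamma}}|x-y|^{\frac{p}{p+1-\gamma}}.
\]
Collecting terms gives $\nu(\xi)t+\alpha|\xi|^{p}\ge c_{0}\big(t+\alpha^{\frac{1-\gamma}{p+1-\gamma}}|x-y|^{\frac{p}{p+1-\gamma}}\big)$ with $c_{0}=\tfrac12\min\{\nu_{0},1\}$, which is \eqref{S^t-Sup-H}.

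For $-2<\gamma<0$ the same mechanism is used, but now $\nu(\xi)\to0$ as $|\xi|\to\infty$, so there is no clean $e^{-c_{0}t}$ factor and the sub-exponential time decay must instead be squeezed out of the competition between $\nu(\xi)t$ and the imposed weight $\alpha|\xi|^{p}$. On $\{|\xi|\ge1\}$ one has $(1+|\xi|)^{\gamma}\ge2^{\gamma}|\xi|^{\gamma}$; splitting $\nu(\xi)t+\alpha|\xi|^{p}$ into two halves, I apply the weighted AM--GM to $2^{\gamma}\nu_{0}|\xi|^{\gamma}t+\alpha|\xi|^{p}$ once with weight $\tfrac{p}{p-\gamma}$ --- for which the powers of $|\xi|$ cancel \emph{completely}, leaving precisely $\alpha^{\frac{-\gamma}{p-\gamma}}t^{\frac{p}{p-\gamma}}$ --- and once with weight $\tfrac{p}{p+1-\gamma}$ as in the hard case, producing $\alpha^{\frac{1-\gamma}{p+1-\gamma}}|x-y|^{\frac{p}{p+1-\gamma}}$. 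On $\{|\xi|\le1\}$ one has $(1+|\xi|)^{\gamma}\ge2^{\gamma}$, hence $\nu(\xi)t\gtrsim t$; since $|x-y|=|\xi|t\le t$ and both target exponents are $<1$ (indeed $\tfrac{p}{p-\gamma}+\tfrac{-\gamma}{p-\gamma}=1$ and $\tfrac{p}{p+1-\gamma}+\tfrac{1-\gamma}{p+1-\gamma}=1$), the term $t$ dominates the target once $t\ge\alpha$, while for $t\le\alpha$ both target pieces are $\lesssim\alpha$ and are absorbed into an additive constant depending only on $\alpha$ and $\gamma$; exponentiating turns that constant into the prefactor $C(\alpha,\gamma)$ of \eqref{S^t-Sup-S}.

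The genuine difficulty is concentrated in this soft-potential case: one must trade the missing large-velocity damping of $\nu(\xi)$ against the velocity weight $e^{\alpha|\xi|^{p}}$ on the data, and the exponents $\tfrac{p}{p-\gamma}$ (time) and $\tfrac{p}{p+1-\gamma}$ (space) appearing in the statement are exactly those dictated by requiring the powers of $|\xi|$ in the weighted AM--GM to either cancel or collapse to the power of $|\xi|t=|x-y|$; the low-frequency/short-time bookkeeping, where $\nu(\xi)\ge\nu_{0}|\xi|^{\gamma}$ fails and must be replaced by $\nu(\xi)\gtrsim1$, is what generates the harmless prefactor $C(\alpha,\gamma)$. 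Everything else --- the diagonalization coming from \eqref{S^t}, the cancellation of $\langle\xi\rangle^{\beta}$, and the passage from $\sup_{\xi}$ to $\sup_{y}$ --- is routine.
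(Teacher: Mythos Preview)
Your argument is correct and in fact more streamlined than the paper's. Both proofs start identically---substitute $y=x-\xi t$ into \eqref{S^t} and reduce the lemma to a lower bound on $\nu(\xi)t+\alpha|\xi|^{p}$---but from there the paper proceeds by an elaborate case analysis: for $0\le\gamma<1$ it splits into $p>\gamma$, $p<\gamma$, $p=\gamma$ (invoking Young's inequality only in the last subcase), and within each it compares $|x-y|$ against the threshold $\alpha^{1/(\gamma-p)}t^{(p+1-\gamma)/(p-\gamma)}$; for $-2<\gamma<0$ it runs through four region subcases. You replace all of this by two applications of the weighted AM--GM with the ``correct'' exponents $\lambda=\tfrac{p}{p+1-\gamma}$ and (in the soft case) $\lambda=\tfrac{p}{p-\gamma}$, together with the single split $|\xi|\gtrless1$ and, inside $|\xi|\le1$, the bookkeeping $t\gtrless\alpha$ that generates the prefactor $C(\alpha,\gamma)$; this last step is exactly the paper's Case~2b computation. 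One small point: your stated value $c_{0}=\tfrac12\min\{\nu_{0},1\}$ is not quite right, since the AM--GM step produces a factor like $(\nu_{0}/2)^{\lambda}$ that depends on $p$ through $\lambda$; the paper's region-splitting actually yields a $p$-independent $c_{0}$ in Case~1a. This is harmless for the applications (where $p$ is fixed), but it is the one place where the case-by-case approach buys something yours does not.
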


\begin{proof}
In view of $\left( \ref{S^t}\right) ,$ let $x-\xi t=y$ and then it suffices
to find the lower bound of
\begin{equation*}
\nu _{0}(t+\left\vert x-y\right\vert )^{\gamma }t^{1-\gamma }+\alpha
\left\vert x-y\right\vert ^{p}t^{-p}.
\end{equation*}%
Case 1. Hard potentials $0\leq \gamma <1$: \newline
Case 1a. As $p>\gamma .$ We discuss the lower bound separately in the two
regions
\begin{equation*}
|x-y|\leq \alpha ^{\frac{1}{\gamma -p}}t^{\frac{p+1-\gamma }{p-\gamma }}%
\text{ and }|x-y|>\alpha ^{\frac{1}{\gamma -p}}t^{\frac{p+1-\gamma }{%
p-\gamma }}.
\end{equation*}

If $|x-y|\leq \alpha ^{\frac{1}{\gamma -p}}t^{\frac{p+1-\gamma }{p-\gamma }%
}\left( \Leftrightarrow t\geq \alpha ^{\frac{1}{p+1-\gamma }}|x-y|^{\frac{%
p-\gamma }{p+1-\gamma }}\right) \,,$ then
\begin{equation*}
(t+\left\vert x-y\right\vert )^{\gamma }t^{1-\gamma }\geq
\begin{cases}
t\,,%
\vspace{3mm}
\\
|x-y|^{\gamma }t^{1-\gamma }\geq \alpha ^{\frac{1-\gamma }{p+1-\gamma }%
}|x-y|^{\frac{p}{p+1-\gamma }}\,,%
\end{cases}%
\end{equation*}%
which implies that
\begin{equation*}
(t+\left\vert x-y\right\vert )^{\gamma }t^{1-\gamma }\geq \frac{1}{2}\left(
t+\alpha ^{\frac{1-\gamma }{p+1-\gamma }}|x-y|^{\frac{p}{p+1-\gamma }%
}\right) .
\end{equation*}

If $|x-y|>\alpha^{\frac{1}{\gamma-p}}t^{\frac{p+1-\gamma}{p-\gamma}}\left(
\Leftrightarrow t<\alpha^{\frac{1}{p+1-\gamma}}|x-y|^{\frac{p-\gamma }{%
p+1-\gamma}}\,\right) ,$ then we have%
\begin{equation*}
(t+\left\vert x-y\right\vert )^{\gamma}t^{1-\gamma}\geq t\,,
\end{equation*}
and
\begin{equation*}
\alpha\left\vert x-y\right\vert ^{p}t^{-p}\geq\alpha^{\frac{1-\gamma }{%
p+1-\gamma}}|x-y|^{\frac{p}{p+1-\gamma}}\,.
\end{equation*}

As a consequence,
\begin{equation*}
\nu _{0}(t+\left\vert x-y\right\vert )^{\gamma }t^{1-\gamma }+\alpha
\left\vert x-y\right\vert ^{p}t^{-p}\geq c_{0}\left( t+\alpha ^{\frac{%
1-\gamma }{p+1-\gamma }}\left\vert x-y\right\vert ^{\frac{p}{p+1-\gamma }%
}\right) ,
\end{equation*}%
for some $c_{0}=c\left( \gamma \right) >0,$ so that
\begin{equation*}
\left\vert h(t,x,\cdot )\right\vert _{L_{\xi }^{\infty
}\left(\left<\xi\right>^{\beta}\right)}\leq \sup_{y}e^{-c\left( t+\alpha ^{%
\frac{1-\gamma }{p+1-\gamma }}\left\vert x-y\right\vert ^{\frac{p}{%
p+1-\gamma }}\right) }\left\vert h_{0}\left( y,\cdot \right) \right\vert
_{L_{\xi }^{\infty }\left( e^{\alpha \left\vert \xi \right\vert
^{p}}\left<\xi\right>^{\beta}\right) }\,.
\end{equation*}%
\newline
Case 1b. As $0<p<\gamma .$ We can apply a similar argument in Case 1a to
obtain $\left( \ref{S^t-Sup-H}\right) $ as well. \newline
Case 1c. As $0<p=\gamma ,$ it is easy to see that%
\begin{align*}
& \nu _{0}(t+\left\vert x-y\right\vert )^{\gamma }t^{1-\gamma }+\alpha
\left\vert x-y\right\vert ^{p}t^{-p} \\
& \geq \left( \nu _{0}\left\vert x-y\right\vert ^{\gamma }t^{1-\gamma
}\right) ^{\frac{p}{p}}+\left( \alpha \left\vert x-y\right\vert
^{p}t^{-p}\right) ^{\frac{1-\gamma }{1-\gamma }} \\
& \geq \nu _{0}^{\gamma }\alpha ^{1-\gamma }\left\vert x-y\right\vert ^{p},
\end{align*}%
due to Young's inequality. Therefore,
\begin{equation*}
\nu _{0}(t+\left\vert x-y\right\vert )^{\gamma }t^{1-\gamma }+\alpha
\left\vert x-y\right\vert ^{p}t^{-p}\geq
\begin{cases}
\nu _{0}t\,,%
\vspace{3mm}
\\
\nu _{0}^{\gamma }\alpha ^{1-\gamma }\left\vert x-y\right\vert ^{p},%
\end{cases}%
\end{equation*}%
which follows that
\begin{equation*}
\nu _{0}(t+\left\vert x-y\right\vert )^{\gamma }t^{1-\gamma }+\alpha
\left\vert x-y\right\vert ^{p}t^{-p}\geq c_{0}\left( t+\alpha ^{1-\gamma
}\left\vert x-y\right\vert ^{p}\right) ,
\end{equation*}%
for some $c_{0}=c\left( \gamma \right) >0,\ $as desired.\newline
Case 2. Soft potentials $-2<\gamma <0$: \newline
Case 2a. $\left\vert x-y\right\vert \leq \alpha ^{\frac{1}{\gamma -p}}t^{%
\frac{p+1-\gamma }{p-\gamma }}\left( \Leftrightarrow t\geq \alpha ^{\frac{1}{%
p+1-\gamma }}|x-y|^{\frac{p-\gamma }{p+1-\gamma }}\right) $ and $\left\vert
x-y\right\vert \geq t.$ In this case we have $t\geq \alpha $ and
\begin{equation*}
(t+\left\vert x-y\right\vert )^{\gamma }t^{1-\gamma }\geq \left\{
\begin{array}{l}
\left( t+\alpha ^{\frac{1}{\gamma -p}}t^{\frac{p+1-\gamma }{p-\gamma }%
}\right) ^{\gamma }t^{1-\gamma }\geq 2^{\gamma }\alpha ^{\frac{-\gamma }{%
p-\gamma }}t^{\frac{p}{p-\gamma }},\,%
\vspace{3mm}
\\
2^{\gamma }\left\vert x-y\right\vert ^{\gamma }t^{1-\gamma }\geq 2^{\gamma
}\alpha ^{\frac{1-\gamma }{p+1-\gamma }}|x-y|^{\frac{p}{p+1-\gamma }},%
\end{array}%
\right.
\end{equation*}%
so that
\begin{equation*}
(t+\left\vert x-y\right\vert )^{\gamma }t^{1-\gamma }\geq 2^{\gamma
-1}\left( \alpha ^{\frac{-\gamma }{p-\gamma }}t^{\frac{p}{p-\gamma }}+\alpha
^{\frac{1-\gamma }{p+1-\gamma }}|x-y|^{\frac{p}{p+1-\gamma }}\right) .
\end{equation*}%
Thus, $\left( \ref{S^t-Sup-S}\right) $ holds.\qquad \qquad \qquad \qquad
\qquad \qquad \qquad \qquad \qquad \qquad \qquad \qquad\ \newline
Case 2b. $\left\vert x-y\right\vert \leq \alpha ^{\frac{1}{\gamma -p}}t^{%
\frac{p+1-\gamma }{p-\gamma }}\left( \Leftrightarrow t\geq \alpha ^{\frac{1}{%
p+1-\gamma }}|x-y|^{\frac{p-\gamma }{p+1-\gamma }}\right) $ and $\left\vert
x-y\right\vert \leq t.$ In this case we have%
\begin{equation*}
\left\vert x-y\right\vert \leq \min \{\alpha ^{\frac{1}{\gamma -p}}t^{\frac{%
p+1-\gamma }{p-\gamma }},t\}.
\end{equation*}

If $\alpha ^{\frac{1}{\gamma -p}}t^{\frac{p+1-\gamma }{p-\gamma }}\geq t,$
then $t\geq \alpha $ and%
\begin{equation*}
(t+\left\vert x-y\right\vert )^{\gamma }t^{1-\gamma }\geq \left\{
\begin{array}{l}
\left( t+\alpha ^{\frac{1}{\gamma -p}}t^{\frac{p+1-\gamma }{p-\gamma }%
}\right) ^{\gamma }t^{1-\gamma }\geq 2^{\gamma }\alpha ^{\frac{-\gamma }{%
p-\gamma }}t^{\frac{p}{p-\gamma },}%
\vspace{3mm}
\\
2^{\gamma }t=2^{\gamma }t^{\frac{1-\gamma }{p+1-\gamma }}t^{\frac{p}{%
p+1-\gamma }}\geq 2^{\gamma }\alpha ^{\frac{1-\gamma }{p+1-\gamma }}|x-y|^{%
\frac{p}{p+1-\gamma }},%
\end{array}%
\right.
\end{equation*}%
which implies that
\begin{equation*}
(t+\left\vert x-y\right\vert )^{\gamma }t^{1-\gamma }\geq 2^{\gamma
-1}\left( \alpha ^{\frac{-\gamma }{p-\gamma }}t^{\frac{p}{p-\gamma }}+\alpha
^{\frac{1-\gamma }{p+1-\gamma }}|x-y|^{\frac{p}{p+1-\gamma }}\right) .
\end{equation*}%
Hence, $\left( \ref{S^t-Sup-S}\right) $ holds.

If $\alpha ^{\frac{1}{\gamma -p}}t^{\frac{p+1-\gamma }{p-\gamma }}\leq t,$
then we deduce $t\leq \alpha $ and thus $\left\vert x-y\right\vert \leq
\alpha .$ Since
\begin{equation*}
(t+\left\vert x-y\right\vert )^{\gamma }t^{1-\gamma }\geq 2^{\gamma }t\geq
2^{\gamma }\alpha ^{\frac{1}{p+1-\gamma }}|x-y|^{\frac{p-\gamma }{p+1-\gamma
}},
\end{equation*}%
we have
\begin{equation*}
(t+\left\vert x-y\right\vert )^{\gamma }t^{1-\gamma }\geq 2^{\gamma
-1}\left( t+\alpha ^{\frac{1}{p+1-\gamma }}|x-y|^{\frac{p-\gamma }{%
p+1-\gamma }}\right) .
\end{equation*}%
Together with the fact that $t\leq \alpha $ and $\left\vert x-y\right\vert
\leq \alpha ,$ we deduce
\begin{align*}
\left| h(t,x,\xi )\right|& \leq e^{-\nu _{0}(t+\left\vert x-y\right\vert
)^{\gamma }t^{1-\gamma }}\left|h_{0}(y,\xi)\right| \\
& \leq e^{-2^{\gamma -1}\nu _{0}t}\cdot e^{-2^{\gamma -1}\nu _{0}\alpha ^{%
\frac{1}{p+1-\gamma }}|x-y|^{\frac{p-\gamma }{p+1-\gamma }}}
\left|h_{0}(y,\xi)\right| \\
& \leq \left[ C_{1}e^{-2^{\gamma -1}\nu _{0}\alpha ^{\frac{-\gamma }{%
p-\gamma }}t^{\frac{p}{p-\gamma }}}\right] \left[ C_{1}e^{-2^{\gamma -1}\nu
_{0}\alpha ^{\frac{1-\gamma }{p+1-\gamma }}|x-y|^{\frac{p}{p+1-\gamma }}}%
\right]\left|h_{0}(y,\xi)\right| \\
& =Ce^{-2^{\gamma -1}\nu _{0}\left( \alpha ^{\frac{-\gamma }{p-\gamma }}t^{%
\frac{p}{p-\gamma }}+\alpha ^{\frac{1-\gamma }{p+1-\gamma }}|x-y|^{\frac{p}{%
p+1-\gamma }}\right) }\left|h_{0}(y,\xi)\right|,\ \ \ C=C_{1}^{2},
\end{align*}%
where
\begin{equation*}
\exp \left( 2^{\gamma -1}\nu _{0}\left( \alpha ^{\frac{-\gamma }{p-\gamma }%
}t^{\frac{p}{p-\gamma }}-t\right) \right) \leq \exp \left[ 2^{\gamma }\nu
_{0}\alpha \right] =C_{1}\left( \alpha ,\gamma \right) ,
\end{equation*}%
and%
\begin{equation*}
\exp \left[ 2^{\gamma -1}\nu _{0}\left( \alpha ^{\frac{1-\gamma }{p+1-\gamma
}}|x-y|^{\frac{p}{p+1-\gamma }}-\alpha ^{\frac{1}{p+1-\gamma }}|x-y|^{\frac{%
p-\gamma }{p+1-\gamma }}\right) \right] \leq \exp \left( 2^{\gamma }\nu
_{0}\alpha \right) =C_{1}\left( \alpha ,\gamma \right) .
\end{equation*}%
Thus, $\left( \ref{S^t-Sup-S}\right) $ holds.\qquad \qquad \qquad \qquad
\qquad \qquad \qquad \qquad \qquad \qquad \qquad \qquad\ \newline
Case 2c. $|x-y|>\alpha ^{\frac{1}{\gamma -p}}t^{\frac{p+1-\gamma }{p-\gamma }%
}\left( \Leftrightarrow t<\alpha ^{\frac{1}{p+1-\gamma }}|x-y|^{\frac{%
p-\gamma }{p+1-\gamma }}\right) \,.$ In this case we have%
\begin{equation*}
\alpha \left\vert x-y\right\vert ^{p}t^{-p}\geq
\begin{cases}
\alpha ^{\frac{-\gamma }{p-\gamma }}t^{\frac{p}{p-\gamma }}\,, \\
\alpha ^{\frac{1-\gamma }{p+1-\gamma }}|x-y|^{\frac{p}{p+1-\gamma }}\,,%
\end{cases}%
\end{equation*}%
so that
\begin{equation*}
\left\vert h(t,x,\cdot )\right\vert _{L_{\xi }^{\infty }\left( \left\langle
\xi \right\rangle ^{\beta }\right) }\leq \sup_{y}e^{-\frac{1}{2}\left(
\alpha ^{\frac{-\gamma }{p-\gamma }}t^{\frac{p}{p-\gamma }}+\alpha ^{\frac{%
1-\gamma }{p+1-\gamma }}\left\vert x-y\right\vert ^{\frac{p}{p+1-\gamma }%
}\right) }\left\vert h_{0}\left( y,\cdot \right) \right\vert _{L_{\xi
}^{\infty }\left( e^{\alpha \left\vert \xi \right\vert ^{p}}\left\langle \xi
\right\rangle ^{\beta }\right) }\,.
\end{equation*}
\end{proof}

Immediately from Lemmas \ref{Exten. of Calfisch} and \ref{S^t-Sup}, we get
the pointwise estimate of $h^{(j)},$ $0\leq j\leq 6,$ as below.

\begin{lemma}[Pointwise estimate of $h^{(j)}$, $0\leq j\leq 6$]
\label{pointwise-h} Let $f_{0}\left( x,\cdot \right) \in L_{\xi }^{\infty
}\left( e^{7\alpha \left\vert \xi \right\vert
^{p}}\left<\xi\right>^{\beta}\right) $ with compact support in variable $x$,
where $0<p\leq 2$, $\beta>3/2$ and $\alpha >0$ is small. Then there exists $%
c_{0}=c\left( \gamma \right) >0$ such that for $0\leq \gamma <1$,
\begin{equation*}
\left\vert h^{(j)}\right\vert _{L_{\xi }^{\infty
}\left(\left<\xi\right>^{\beta}\right)}\lesssim t^{j}e^{-c_{0}\left(
t+\alpha ^{\frac{1-\gamma }{p+1-\gamma }}\left\vert x\right\vert ^{\frac{p}{%
p+1-\gamma }}\right) }\left\Vert f_{0}\right\Vert _{L_{x}^{\infty }L_{\xi
}^{\infty }\left( e^{(j+1)\alpha \left\vert \xi \right\vert
^{p}}\left<\xi\right>^{\beta}\right) }\,,
\end{equation*}%
and for $-2<\gamma <0$,
\begin{equation*}
\left\vert h^{(j)}\right\vert _{L_{\xi }^{\infty
}\left(\left<\xi\right>^{\beta}\right)}\lesssim t^{j}e^{-c_{0}\left( \alpha
^{\frac{-\gamma }{p-\gamma }}t^{\frac{p}{p-\gamma }}+\alpha ^{\frac{1-\gamma
}{p+1-\gamma }}\left\vert x\right\vert ^{\frac{p}{p+1-\gamma }}\right)
}\left\Vert f_{0}\right\Vert _{L_{x}^{\infty }L_{\xi }^{\infty }\left(
e^{(j+1)\alpha \left\vert \xi \right\vert
^{p}}\left<\xi\right>^{\beta}\right) }\,.
\end{equation*}
\end{lemma}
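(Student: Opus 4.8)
The plan is to proceed by induction on $j$, representing each $h^{(j)}$ via Duhamel's formula in terms of the damped transport semigroup $\mathbb{S}^{t}$ and the integral operator $K$, and then alternately applying the sup-norm estimate for $K$ (Lemma \ref{Exten. of Calfisch}) and the pointwise decay estimate for $\mathbb{S}^{t}$ (Lemma \ref{S^t-Sup}). For the base case $j=0$, we have $h^{(0)}=\mathbb{S}^{t}f_{0}$, so \eqref{S^t-Sup-H} and \eqref{S^t-Sup-S} give exactly the claimed bound (with the extra polynomial weight $\langle\xi\rangle^{\beta}$ carried along), using that $f_{0}$ is compactly supported in $x$ so that $\sup_{|y|\le 1}$ of the initial data is controlled by $\|f_{0}\|_{L^{\infty}_{x}L^{\infty}_{\xi}(e^{\alpha|\xi|^{p}}\langle\xi\rangle^{\beta})}$ and, in the spatial decay factor, $|x-y|^{p/(p+1-\gamma)}\gtrsim |x|^{p/(p+1-\gamma)}-1$ when $|y|\le 1$, so the loss is absorbed into the constant.

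For the inductive step, from \eqref{bot.3.d} and Duhamel's principle,
\begin{equation*}
h^{(j)}(t)=\int_{0}^{t}\mathbb{S}^{t-s}Kh^{(j-1)}(s)\,ds.
\end{equation*}
I would first apply Lemma \ref{Exten. of Calfisch} with $\beta_{1}=\beta$ and a small $\beta_{2}$ to the function $h^{(j-1)}(s,x,\cdot)$: since $Kh^{(j-1)}(s,x,\cdot)$ gains the weight $\langle\xi\rangle^{-\beta+\gamma-2}e^{-\beta_{2}|\xi|^{p}}$ against $|h^{(j-1)}(s,x,\cdot)|_{L^{\infty}_{\xi}(\langle\xi\rangle^{\beta}e^{\beta_{2}|\xi|^{p}})}$, and by the inductive hypothesis the latter is bounded (for fixed $x$) by $s^{j-1}$ times the appropriate exponential space-time decay times $\|f_{0}\|_{L^{\infty}_{x}L^{\infty}_{\xi}(e^{j\alpha|\xi|^{p}}\langle\xi\rangle^{\beta})}$ — here I would fix $\beta_{2}=\alpha$ (legitimate for $\alpha$ small since the lemma only needs $0\le\beta_{2}<1/4$), so that the velocity weight budget is incremented by exactly one unit of $\alpha$ at each step, which is the bookkeeping that produces $e^{(j+1)\alpha|\xi|^{p}}$ on the right-hand side. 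Then I apply \eqref{S^t-Sup-H} (resp. \eqref{S^t-Sup-S}) to propagate $\mathbb{S}^{t-s}$ acting on $Kh^{(j-1)}(s)$, which reintroduces the factor $e^{\alpha|\xi|^{p}}$ needed to match the hypothesis of Lemma \ref{S^t-Sup}, and yields a space-time decay kernel in $(t-s,x-y)$ against the $s$-dependent decay already present.

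The core of the argument is then the convolution estimate: I must show that
\begin{equation*}
\int_{0}^{t}\sup_{y}\Big[e^{-c_{0}((t-s)+\alpha^{\frac{1-\gamma}{p+1-\gamma}}|x-y|^{\frac{p}{p+1-\gamma}})}\,s^{j-1}e^{-c_{0}(s+\alpha^{\frac{1-\gamma}{p+1-\gamma}}|y|^{\frac{p}{p+1-\gamma}})}\Big]\,ds\lesssim t^{j}e^{-c_{0}'(t+\alpha^{\frac{1-\gamma}{p+1-\gamma}}|x|^{\frac{p}{p+1-\gamma}})}
\end{equation*}
and its soft-potential analogue with $t\mapsto\alpha^{\frac{-\gamma}{p-\gamma}}t^{\frac{p}{p-\gamma}}$. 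The time part is routine: $\int_{0}^{t}e^{-c_{0}(t-s)}s^{j-1}e^{-c_{0}s}\,ds\le e^{-c_{0}t}\int_{0}^{t}s^{j-1}ds\lesssim t^{j}e^{-c_{0}t}$ (and for the soft case one uses that $r\mapsto r^{p/(p-\gamma)}$ is superadditive, i.e. $(t-s)^{\frac{p}{p-\gamma}}+s^{\frac{p}{p-\gamma}}\le$ comparable to $t^{\frac{p}{p-\gamma}}$ up to constants after slightly shrinking $c_{0}$, losing only a polynomial factor absorbed into $t^{j}$). The spatial part is where the main obstacle lies: one needs a triangle-type inequality for the concave exponent, namely $|x-y|^{\frac{p}{p+1-\gamma}}+|y|^{\frac{p}{p+1-\gamma}}\gtrsim |x|^{\frac{p}{p+1-\gamma}}$, which holds because $0<\frac{p}{p+1-\gamma}\le 1$ makes $r\mapsto r^{p/(p+1-\gamma)}$ subadditive — so $|x|^{\frac{p}{p+1-\gamma}}\le(|x-y|+|y|)^{\frac{p}{p+1-\gamma}}\le|x-y|^{\frac{p}{p+1-\gamma}}+|y|^{\frac{p}{p+1-\gamma}}$ — and taking the supremum over $y$ of the product of the two decay kernels, the worst case is balanced at the split point and gives back the single kernel in $|x|$ with a possibly smaller constant $c_{0}$. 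Combining the time and space estimates, relabeling constants, and noting the velocity weight has been incremented from $j\alpha$ to $(j+1)\alpha$, completes the induction; since we only iterate up to $j=6$, all accumulated polynomial-in-$t$ and constant losses are harmless, and the final velocity weight $e^{7\alpha|\xi|^{p}}$ matches the hypothesis on $f_{0}$.
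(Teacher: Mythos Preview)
Your approach is correct and is exactly what the paper intends: the paper's proof consists of the single sentence that the lemma follows ``immediately from Lemmas~\ref{Exten. of Calfisch} and~\ref{S^t-Sup},'' and you have filled in the Duhamel induction and convolution details that this sentence encodes.

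Two small slips to clean up. First, the inductive hypothesis in the form of the lemma for $j-1$ only controls $|h^{(j-1)}|_{L^\infty_\xi(\langle\xi\rangle^\beta)}$, not $|h^{(j-1)}|_{L^\infty_\xi(\langle\xi\rangle^\beta e^{\alpha|\xi|^p})}$; to feed into Lemma~\ref{Exten. of Calfisch} with $\beta_2=\alpha$ you should strengthen the inductive statement to carry the extra factor $e^{\alpha|\xi|^p}$ on the output side as well --- this is free because $\mathbb{S}^t g=e^{-\nu(\xi)t}g(x-\xi t,\xi)$ commutes with multiplication by any function of $\xi$ alone, so Lemma~\ref{S^t-Sup} upgrades immediately to the weighted version. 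Second, in the soft-potential time convolution you have the inequality backwards: since $-2<\gamma<0$ gives $0<\tfrac{p}{p-\gamma}<1$, the map $r\mapsto r^{p/(p-\gamma)}$ is \emph{sub}additive, hence $(t-s)^{p/(p-\gamma)}+s^{p/(p-\gamma)}\ge t^{p/(p-\gamma)}$, which is precisely the direction you need for $e^{-c_0[\cdots]}$ to be bounded by $e^{-c_0 \alpha^{-\gamma/(p-\gamma)}t^{p/(p-\gamma)}}$; your conclusion is right but the word ``superadditive'' and the ``$\le$'' should be flipped.
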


In order to get the $L^{2}$ estimate of $h^{(j)}$, we need the $L^{2}$
estimate of the damped transport operator $\mathbb{S}^{t}$.

\begin{lemma}
\label{S^t-L2} Let $\alpha >0$ and $0<p\leq 2.$ Then for $0\leq \gamma <1,$%
\begin{equation*}
\left\Vert \mathbb{S}^{t}h_{0}\right\Vert _{L^{2}}\lesssim e^{-\nu
_{0}t}\left\Vert h_{0}\right\Vert _{L^{2}},
\end{equation*}%
and for $-2<\gamma <0,$%
\begin{equation*}
\left\Vert \mathbb{S}^{t}h_{0}\right\Vert _{L^{2}}\lesssim e^{-c\alpha ^{%
\frac{-\gamma }{p-\gamma }}t^{\frac{p}{p-\gamma }}}\left\Vert
h_{0}\right\Vert _{L^{2}\left( e^{\alpha \left\vert \xi \right\vert
^{p}}\right) },
\end{equation*}%
where the constant $c>0$ depends only upon $\gamma \ $and $p.$
\end{lemma}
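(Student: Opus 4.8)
The plan rests on the explicit formula \eqref{S^t}, namely $\mathbb{S}^{t}h_{0}(x,\xi)=e^{-\nu(\xi)t}h_{0}(x-\xi t,\xi)$, together with the fact that for each fixed $\xi$ the shift $x\mapsto x-\xi t$ preserves Lebesgue measure on $\mathbb{R}^{3}$. Consequently
\begin{equation*}
\left\Vert \mathbb{S}^{t}h_{0}\right\Vert _{L^{2}}^{2}=\int_{\mathbb{R}^{3}\times\mathbb{R}^{3}}e^{-2\nu(\xi)t}\left\vert h_{0}(x,\xi)\right\vert ^{2}\,dx\,d\xi ,
\end{equation*}
so the whole lemma reduces to pointwise control of the scalar multiplier $e^{-2\nu(\xi)t}$, for which the only input is the two-sided bound \eqref{nu-gamma} on $\nu$.

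For $0\leq\gamma<1$ this is immediate: \eqref{nu-gamma} gives $\nu(\xi)\geq\nu_{0}(1+|\xi|)^{\gamma}\geq\nu_{0}$ for every $\xi$, hence $e^{-2\nu(\xi)t}\leq e^{-2\nu_{0}t}$ and $\Vert\mathbb{S}^{t}h_{0}\Vert_{L^{2}}^{2}\leq e^{-2\nu_{0}t}\Vert h_{0}\Vert_{L^{2}}^{2}$.

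For $-2<\gamma<0$ the multiplier $\nu(\xi)$ decays like $|\xi|^{\gamma}$ at infinity, so pure exponential decay is lost and one must trade this slow damping against the extra weight $e^{\alpha|\xi|^{p}}$ sitting on the initial datum. I would fix a threshold $R=R(t,\alpha)\geq1$ and split the $\xi$-integral. On $\{|\xi|\leq R\}$, since $\gamma<0$ and $R\geq1$ we have $\nu(\xi)\geq\nu_{0}(1+|\xi|)^{\gamma}\geq\nu_{0}(1+R)^{\gamma}\geq 2^{\gamma}\nu_{0}R^{\gamma}$, so $e^{-2\nu(\xi)t}\leq e^{-2^{\gamma+1}\nu_{0}R^{\gamma}t}$ there; on $\{|\xi|>R\}$ one simply writes $e^{-2\nu(\xi)t}\leq 1\leq e^{-\alpha R^{p}}e^{\alpha|\xi|^{p}}$. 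Using $e^{\alpha|\xi|^{p}}\geq1$ to absorb the first piece into the weighted norm as well, this yields
\begin{equation*}
\left\Vert \mathbb{S}^{t}h_{0}\right\Vert _{L^{2}}^{2}\leq\Big(e^{-2^{\gamma+1}\nu_{0}R^{\gamma}t}+e^{-\alpha R^{p}}\Big)\left\Vert h_{0}\right\Vert _{L^{2}(e^{\alpha|\xi|^{p}})}^{2}.
\end{equation*}
Choosing $R$ so that the two exponents balance, i.e.\ $R^{p-\gamma}\asymp t/\alpha$, makes each of them comparable to $\alpha^{-\gamma/(p-\gamma)}t^{p/(p-\gamma)}$ --- this is precisely where the exponents $\tfrac{-\gamma}{p-\gamma}$ and $\tfrac{p}{p-\gamma}$ in the statement originate, via $\alpha R^{p}=\alpha^{1-p/(p-\gamma)}t^{p/(p-\gamma)}$ --- and gives the desired estimate as long as $t\gtrsim\alpha$, which is what the condition $R\geq1$ requires.

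The computation is routine and there is no serious obstacle; the only points needing a little attention are the bookkeeping in the optimization just described and the complementary range $0<t\lesssim\alpha$. In that range $\alpha^{-\gamma/(p-\gamma)}t^{p/(p-\gamma)}\leq\alpha$ is bounded, so the target factor $e^{-c\alpha^{-\gamma/(p-\gamma)}t^{p/(p-\gamma)}}$ is bounded below by a constant, and the trivial bound $\Vert\mathbb{S}^{t}h_{0}\Vert_{L^{2}}\leq\Vert h_{0}\Vert_{L^{2}}\leq\Vert h_{0}\Vert_{L^{2}(e^{\alpha|\xi|^{p}})}$ already does the job --- at the cost of letting the implicit constant in $\lesssim$ depend on $\alpha$, exactly as the constant $C(\alpha,\gamma)$ in Lemma \ref{S^t-Sup} does.
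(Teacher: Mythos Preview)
Your proof is correct and follows essentially the same approach as the paper: both start from the explicit formula \eqref{S^t} plus translation invariance to reduce to a pointwise bound on $e^{-\nu(\xi)t-\alpha|\xi|^{p}}$, and both obtain the subexponential rate by balancing the damping $\nu(\xi)t\sim|\xi|^{\gamma}t$ against the weight $\alpha|\xi|^{p}$. The only cosmetic difference is that the paper splits at the fixed threshold $|\xi|=1$ and then minimizes $2^{\gamma}\nu_{0}t|\xi|^{\gamma}+\alpha|\xi|^{p}$ analytically over $|\xi|>1$, whereas you split at a variable $R$, discard the damping on $\{|\xi|>R\}$, and optimize over $R$; the resulting exponents are identical.
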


\begin{proof}
We only prove the case $-2< \gamma<0$ since the case $0\leq \gamma <1$ is
obvious. In view of $\left( \ref{nu-gamma}\right) $ and $\left( \ref{S^t}%
\right) ,$
\begin{equation*}
\left\Vert \mathbb{S}^{t}h_{0}\right\Vert _{L^{2}}\leq \left( \sup_{\xi
}e^{-\nu _{0}t\left( 1+\left\vert \xi \right\vert \right) ^{\gamma }-\alpha
\left\vert \xi \right\vert ^{p}}\right) \left\Vert h_{0}\right\Vert
_{L^{2}\left( e^{\alpha \left\vert \xi \right\vert ^{p}}\right) },
\end{equation*}%
since $h_{0}$ has compact support in $x.$ As for $-2<\gamma <0,$%
\begin{eqnarray*}
\left\Vert \mathbb{S}^{t}h_{0}\right\Vert _{L^{2}} &\leq &\left( \sup_{\xi
}e^{-\nu _{0}t\left( 1+\left\vert \xi \right\vert \right) ^{\gamma }-\alpha
\left\vert \xi \right\vert ^{p}}\right) \left\Vert h_{0}\right\Vert
_{L^{2}\left( e^{\alpha \left\vert \xi \right\vert ^{p}}\right) } \\
&\leq &\left( \sup_{\left\vert \xi \right\vert \leq 1}e^{-\nu _{0}t\left(
1+\left\vert \xi \right\vert \right) ^{\gamma }-\alpha \left\vert \xi
\right\vert ^{p}}+\sup_{\left\vert \xi \right\vert >1}e^{-\nu _{0}t\left(
1+\left\vert \xi \right\vert \right) ^{\gamma }-\alpha \left\vert \xi
\right\vert ^{p}}\right) \left\Vert h_{0}\right\Vert _{L^{2}\left( e^{\alpha
\left\vert \xi \right\vert ^{p}}\right) } \\
&\leq &\left( e^{-2^{\gamma }\nu _{0}t}+\sup_{\left\vert \xi \right\vert
>1}e^{-2^{\gamma }\nu _{0}t\left\vert \xi \right\vert ^{\gamma }-\alpha
\left\vert \xi \right\vert ^{p}}\right) \left\Vert h_{0}\right\Vert
_{L^{2}\left( e^{\alpha \left\vert \xi \right\vert ^{p}}\right) } \\
&\leq &\left( e^{-2^{\gamma }\nu _{0}t}+e^{-\left( \frac{p-\gamma }{-\gamma }%
\left( \frac{-\gamma 2^{\gamma }\nu _{0}}{p}\right) ^{\frac{p}{p-\gamma }%
}\alpha ^{\frac{-\gamma }{p-\gamma }}\right) t^{\frac{p}{p-\gamma }}}\right)
\left\Vert h_{0}\right\Vert _{L^{2}\left( e^{\alpha \left\vert \xi
\right\vert ^{p}}\right) } \\
&\lesssim &e^{-c\alpha ^{\frac{-\gamma }{p-\gamma }}t^{\frac{p}{p-\gamma }%
}}\left\Vert h_{0}\right\Vert _{L^{2}\left( e^{\alpha \left\vert \xi
\right\vert ^{p}}\right) }
\end{eqnarray*}%
for some constant $c=c\left( \gamma ,p\right) >0,$ since $2^{\gamma }\nu
_{0}t\left\vert \xi \right\vert ^{\gamma }+\alpha \left\vert \xi \right\vert
^{p}$ attains a minimum at $\left\vert \xi \right\vert =\left( \frac{-\gamma
2^{\gamma }\nu _{0}t}{\alpha p}\right) ^{\frac{1}{p-\gamma }}.$
\end{proof}

Combining Lemma \ref{S^t-L2} and (\ref{K-Ex}), we thereby get the $L^{2}$
estimates of $h^{(j)}$, $0\leq j\leq 6$.

\begin{lemma}[$L^{2}$ estimate of $h^{(j)},$ $0\leq j\leq 6$]
\label{L2-h} Let $f_{0}\left( x,\cdot \right) \in L^{2}\left( e^{7\alpha
\left\vert \xi \right\vert ^{p}}\right) ,$ where $0<p\leq 2$ and $\alpha >0$
is small. Then there exists a constant $c>0$ such that for $0\leq \gamma <1$%
,
\begin{equation*}
\left\Vert h^{(j)}\right\Vert _{L^{2}}\lesssim t^{j}e^{-\nu _{0}t}\left\Vert
f_{0}\right\Vert _{L^{2}}\,,
\end{equation*}%
and for $-2<\gamma <0$,
\begin{equation*}
\left\Vert h^{(j)}\right\Vert _{L^{2}}\lesssim t^{j}e^{-c\alpha ^{\frac{%
-\gamma }{p-\gamma }}t^{\frac{p}{p-\gamma }}}\left\Vert f_{0}\right\Vert
_{L^{2}\left( e^{(j+1)\alpha \left\vert \xi \right\vert ^{p}}\right) }\,.
\end{equation*}
\end{lemma}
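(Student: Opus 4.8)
The plan is to start from Duhamel's principle for \eqref{bot.3.d}: since $h^{(0)}=\mathbb{S}^{t}f_{0}$ and, for $1\leq j\leq 6$, $h^{(j)}$ solves the damped transport equation with source $Kh^{(j-1)}$ and zero initial data,
\begin{equation*}
h^{(j)}(t)=\int_{0}^{t}\mathbb{S}^{t-s}Kh^{(j-1)}(s)\,ds .
\end{equation*}
Then I would induct on $j$, feeding in the $L^{2}$ decay of $\mathbb{S}^{t}$ from Lemma \ref{S^t-L2}, the boundedness of $K$ from Lemma \ref{basic} (for $0\leq\gamma<1$) and from \eqref{K-Ex} (for $-2<\gamma<0$), together with an elementary convolution estimate in $t$.

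For $0\leq\gamma<1$ no velocity weight is needed on the right-hand side: Lemma \ref{S^t-L2} gives $\Vert\mathbb{S}^{t}g\Vert_{L^{2}}\leq e^{-\nu_{0}t}\Vert g\Vert_{L^{2}}$, while Lemma \ref{basic} gives $|Kg|_{L_{\xi}^{2}}\lesssim|g|_{L_{\xi}^{2}}$, hence $\Vert Kg\Vert_{L^{2}}\lesssim\Vert g\Vert_{L^{2}}$. Assuming inductively $\Vert h^{(j-1)}(s)\Vert_{L^{2}}\lesssim s^{j-1}e^{-\nu_{0}s}\Vert f_{0}\Vert_{L^{2}}$, Duhamel yields $\Vert h^{(j)}(t)\Vert_{L^{2}}\lesssim\int_{0}^{t}e^{-\nu_{0}(t-s)}s^{j-1}e^{-\nu_{0}s}\,ds\,\Vert f_{0}\Vert_{L^{2}}=\tfrac1j t^{j}e^{-\nu_{0}t}\Vert f_{0}\Vert_{L^{2}}$, which closes the induction (the case $j=0$ being $h^{(0)}=\mathbb{S}^{t}f_{0}$).

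For $-2<\gamma<0$ the velocity weight must be tracked. Writing $\Phi(t)=\alpha^{\frac{-\gamma}{p-\gamma}}t^{\frac{p}{p-\gamma}}$, I would prove by induction on $j$ the stronger bound
\begin{equation*}
\Vert h^{(j)}\Vert_{L^{2}\left(e^{i\alpha|\xi|^{p}}\right)}\lesssim t^{j}e^{-c\Phi(t)}\,\Vert f_{0}\Vert_{L^{2}\left(e^{(i+j+1)\alpha|\xi|^{p}}\right)},\qquad 0\leq i\leq 6-j .
\end{equation*}
Since a weight depending on $\xi$ alone commutes with $\mathbb{S}^{t}$ (it commutes both with the shift $x\mapsto x-\xi t$ and with multiplication by $e^{-\nu(\xi)t}$), the proof of Lemma \ref{S^t-L2} gives more generally $\Vert\mathbb{S}^{t}g\Vert_{L^{2}(e^{i\alpha|\xi|^{p}})}\lesssim e^{-c\Phi(t)}\Vert g\Vert_{L^{2}(e^{(i+1)\alpha|\xi|^{p}})}$, and with $g=f_{0}$ this is the base case $j=0$. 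For the inductive step, Duhamel together with this weighted $\mathbb{S}^{t}$ estimate gives
\begin{equation*}
\Vert h^{(j)}(t)\Vert_{L^{2}(e^{i\alpha|\xi|^{p}})}\lesssim\int_{0}^{t}e^{-c\Phi(t-s)}\,\Vert Kh^{(j-1)}(s)\Vert_{L^{2}(e^{(i+1)\alpha|\xi|^{p}})}\,ds ;
\end{equation*}
then \eqref{K-Ex} (with the small weight $(i+1)\alpha\leq 7\alpha$) and the induction hypothesis with $i$ replaced by $i+1$ bound the integrand by $e^{-c\Phi(t-s)}s^{j-1}e^{-c\Phi(s)}\Vert f_{0}\Vert_{L^{2}(e^{(i+j+1)\alpha|\xi|^{p}})}$. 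Finally, since $\tau\mapsto\tau^{p/(p-\gamma)}$ is increasing and concave on $[0,\infty)$ (here $0<\tfrac{p}{p-\gamma}<1$ because $\gamma<0$), one has $\Phi(s)+\Phi(t-s)\geq\Phi(t)$ for $0\leq s\leq t$, whence $\int_{0}^{t}e^{-c\Phi(t-s)}s^{j-1}e^{-c\Phi(s)}\,ds\leq e^{-c\Phi(t)}\int_{0}^{t}s^{j-1}\,ds=\tfrac1j t^{j}e^{-c\Phi(t)}$. Taking $i=0$ gives the stated estimate; the hypothesis $f_{0}\in L^{2}(e^{7\alpha|\xi|^{p}})$ supplies the required weight because $i+j+1$ stays $\leq7$ throughout the induction, and the finitely many constants $c$ produced along the way may be replaced by their minimum to get a single $c=c(\gamma,p)$. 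The only genuine care here is this bookkeeping of the exponential velocity weight and the superadditivity of $\Phi$; apart from that the lemma is a direct consequence of Lemmas \ref{basic}, \ref{S^t-L2} and estimate \eqref{K-Ex}, so I do not expect a substantial obstacle.
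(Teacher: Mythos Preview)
Your proposal is correct and is exactly the approach the paper has in mind: the paper's own proof is a single sentence, ``Combining Lemma~\ref{S^t-L2} and (\ref{K-Ex}), we thereby get the $L^{2}$ estimates of $h^{(j)}$, $0\leq j\leq 6$,'' and your Duhamel iteration with the weight bookkeeping and the superadditivity of $\Phi$ (from concavity of $\tau\mapsto\tau^{p/(p-\gamma)}$) is precisely what makes that sentence rigorous. One small remark: the constant $c$ in the exponential does not actually shrink along the induction (the same $c$ from Lemma~\ref{S^t-L2} survives each step via $\Phi(s)+\Phi(t-s)\geq\Phi(t)$); only the implicit constant in $\lesssim$, coming from the $K$-bound, accumulates---and since $j\leq 6$ this is harmless.
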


\subsection{Regularization estimate}

In the previous subsection, we have carried out the pointwise estimate of
the wave part and the $L^{2}$ estimates of $\mathcal{R}^{(6)}$. To obtain
the pointwise estimate on $\mathcal{R}^{(6)},$ we still need the
regularization estimate for $\mathcal{R}^{(6)}.$ In light of (\ref{R^(6)}),
we turn to the regularization estimate for $h^{(6)}$ in advance. To proceed,
we introduce a differential operator:
\begin{equation*}
\mathcal{D}_{t}=t\nabla _{x}+\nabla _{\xi }\,.
\end{equation*}%
This operator $\mathcal{D}_{t}$ is important since it commutes with the free
transport operator, i.e.,
\begin{equation*}
\lbrack \mathcal{D}_{t},\partial _{t}+\xi \cdot \nabla _{x}]=0\,,
\end{equation*}%
where $[A,B]=AB-BA$ is the commutator.

\begin{lemma}
\label{Operators-KSD} For any $g_{0}\in L_{x}^{2}H_{\xi }^{1}(\mu )$, we
have
\begin{equation}
\Vert Kg_{0}\Vert _{L_{x}^{2}H_{\xi }^{1}(\mu )}\lesssim \Vert g_{0}\Vert
_{L^{2}(\mu )}\,,\ \Vert K(\nabla _{\xi }g)\Vert _{L^{2}(\mu )}\lesssim
\Vert g_{0}\Vert _{L^{2}(\mu )}\,,  \label{K-4}
\end{equation}%
\begin{equation}
\Vert \mathbb{S}^{t}g_{0}\Vert _{L^{2}(\mu )}\lesssim e^{-c_{\gamma }t}\Vert
g_{0}\Vert _{L^{2}(\mu )}\,,  \label{1}
\end{equation}%
\begin{equation}
\displaystyle\Vert \mathcal{D}_{t}\mathbb{S}^{t}g_{0}\Vert _{L^{2}(\mu
)}\lesssim \left( 1+t\right) e^{-c_{\gamma }t}\Vert g_{0}\Vert
_{L_{x}^{2}H_{\xi }^{1}(\mu )}\,,  \label{ml.1}
\end{equation}%
here $c_{\gamma }>0$ for $0\leq \gamma <1$ and $c_{\gamma }=0$ for $%
-2<\gamma <0$.

Consequently,%
\begin{equation}
\Vert \mathcal{D}_{t}\mathbb{S}^{t}Kg_{0}\Vert _{L^{2}(\mu )}\lesssim \left(
1+t\right) e^{-c_{\gamma }t}\Vert g_{0}\Vert _{L^{2}(\mu )}.  \label{Dt-S-K}
\end{equation}
\end{lemma}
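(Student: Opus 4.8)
The plan is to prove the four displayed inequalities in the stated order; the last, (\ref{Dt-S-K}), is immediate from (\ref{ml.1}) applied to $Kg_{0}$ and the first bound of (\ref{K-4}), so the work is in the first three. For (\ref{K-4}) I would run the argument of Lemma \ref{K-esti-2}, keeping $x$ frozen as a parameter and integrating in $x$ only at the end. The point is that $\nabla_{\xi}$ falls only on the kernel, $\nabla_{\xi}(Kg_{0})(\xi)=\int\nabla_{\xi}k(\xi,\xi_{\ast})g_{0}(\xi_{\ast})\,d\xi_{\ast}$, while $K(\nabla_{\xi}g_{0})$ is rewritten, after an integration by parts in $\xi_{\ast}$ (boundary terms vanish by the Gaussian decay of $k$), as $-\int\nabla_{\xi_{\ast}}k(\xi,\xi_{\ast})g_{0}(\xi_{\ast})\,d\xi_{\ast}$. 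By Lemma \ref{basic} the kernels $\nabla_{\xi}k$, $\nabla_{\xi_{\ast}}k$ are dominated, up to Gaussian factors, by $|\xi-\xi_{\ast}|^{\gamma-1}$ (from $k_{1}$) and $|\xi-\xi_{\ast}|^{-2}(1+|\xi|+|\xi_{\ast}|)^{\gamma-1}$ (from $k_{2}$); both are locally integrable on $\mathbb{R}^{3}$ precisely because $\gamma>-2$ --- which is exactly where the standing restriction enters --- and, by the same splitting $|\xi_{\ast}|<\tfrac{1}{3}|\xi|$ versus $|\xi_{\ast}|>\tfrac{1}{3}|\xi|$ together with Proposition~5.3 of \cite{[Caflisch]} as in the proof of Lemma \ref{Exten. of Calfisch}, their $\xi_{\ast}$-integrals are $\lesssim(1+|\xi|)^{\gamma-1}\lesssim1$, and symmetrically in $\xi$. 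Conjugating by $\mu^{1/2}$ inserts the factor $e^{\frac{\epsilon}{2}(\theta(x,\xi)-\theta(x,\xi_{\ast}))}$, which by (\ref{theta-est}) is $\le e^{\frac{\epsilon}{2}c_{1}||\xi|^{2}-|\xi_{\ast}|^{2}|}$ and is swallowed by the Gaussian/exponential factors for $\epsilon$ small, exactly as in (\ref{Kj-weighted-estimate}); a Schur test then gives uniform-in-$x$ $L_{\xi}^{2}$-boundedness, and integrating in $x$ yields (\ref{K-4}).

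For (\ref{1}) I would use the representation (\ref{S^t}) and the substitution $y=x-\xi t$:
\begin{equation*}
\Vert\mathbb{S}^{t}g_{0}\Vert_{L^{2}(\mu)}^{2}=\int_{\mathbb{R}^{6}}e^{-2\nu(\xi)t}\,|g_{0}(y,\xi)|^{2}\,\mu(y+\xi t,\xi)\,dy\,d\xi\,,
\end{equation*}
so that (\ref{1}) reduces to the pointwise inequality $e^{-2\nu(\xi)t}\mu(y+\xi t,\xi)\le e^{-2c_{\gamma}t}\mu(y,\xi)$, i.e., for $\mu=e^{\epsilon\theta}$, to $\epsilon(\theta(y+\xi t,\xi)-\theta(y,\xi))\le2(\nu(\xi)-c_{\gamma})t$. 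The crux is the uniform gradient bound $|\nabla_{x}\theta(x,\xi)|\lesssim\delta\langle\xi\rangle^{\gamma-1}$, and this is exactly where the exponents $p/(p+1-\gamma)$ and $\gamma-p-1$ in $\theta$ are tuned to cooperate: on the support of the first term $\langle x\rangle\gtrsim\delta^{-1}\langle\xi\rangle^{p+1-\gamma}$, so $\delta^{p/(p+1-\gamma)}\langle x\rangle^{p/(p+1-\gamma)-1}\lesssim\delta\langle\xi\rangle^{\gamma-1}$, while in the transition zone $\langle x\rangle\sim\delta^{-1}\langle\xi\rangle^{p+1-\gamma}$ the derivative of $\chi(\delta\langle x\rangle\langle\xi\rangle^{\gamma-p-1})$ contributes $\langle\xi\rangle^{p}\cdot\delta\langle\xi\rangle^{\gamma-p-1}=\delta\langle\xi\rangle^{\gamma-1}$; the second term of $\theta$ is handled the same way. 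Granting this, $|\theta(y+\xi t,\xi)-\theta(y,\xi)|\le t|\xi|\sup_{s}|\nabla_{x}\theta(y+s\xi t,\xi)|\lesssim\delta t|\xi|\langle\xi\rangle^{\gamma-1}\le\delta t\langle\xi\rangle^{\gamma}\lesssim\delta\,\nu(\xi)t$ by (\ref{nu-gamma}), so for $\epsilon\delta$ small enough $\epsilon(\theta(y+\xi t,\xi)-\theta(y,\xi))\le\tfrac12\nu(\xi)t$; one may then take $c_{\gamma}>0$ when $0\le\gamma<1$ (there $\nu(\xi)\ge\nu_{0}$) but only $c_{\gamma}=0$ when $-2<\gamma<0$ (there $\nu(\xi)\downarrow0$). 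The case $\mu\equiv1$ is immediate from $e^{-\nu(\xi)t}\le e^{-c_{\gamma}t}$.

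For (\ref{ml.1}) I would exploit the cancellation that motivates $\mathcal{D}_{t}$: differentiating (\ref{S^t}) and using $[\mathcal{D}_{t},\partial_{t}+\xi\cdot\nabla_{x}]=0$, the two copies of $t(\nabla_{x}g_{0})(x-\xi t,\xi)$ cancel, leaving
\begin{equation*}
\mathcal{D}_{t}\mathbb{S}^{t}g_{0}=-t\,\nabla_{\xi}\nu(\xi)\,\mathbb{S}^{t}g_{0}+\mathbb{S}^{t}(\nabla_{\xi}g_{0})\,.
\end{equation*}
Since $|\nabla_{\xi}\nu(\xi)|\lesssim\langle\xi\rangle^{\gamma-1}\le1$ by (\ref{bot.1.f}), applying (\ref{1}) to each summand gives
\begin{equation*}
\Vert\mathcal{D}_{t}\mathbb{S}^{t}g_{0}\Vert_{L^{2}(\mu)}\lesssim t\,e^{-c_{\gamma}t}\Vert g_{0}\Vert_{L^{2}(\mu)}+e^{-c_{\gamma}t}\Vert\nabla_{\xi}g_{0}\Vert_{L^{2}(\mu)}\lesssim(1+t)e^{-c_{\gamma}t}\Vert g_{0}\Vert_{L_{x}^{2}H_{\xi}^{1}(\mu)}\,,
\end{equation*}
which is (\ref{ml.1}); and (\ref{Dt-S-K}) is then (\ref{ml.1}) applied to $Kg_{0}$ together with the first bound of (\ref{K-4}). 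I expect the only genuine obstacle to be the weight comparison in (\ref{1}) --- concretely the uniform estimate $|\nabla_{x}\theta|\lesssim\delta\langle\xi\rangle^{\gamma-1}$ --- which is what forces the precise exponents in $\theta$ and the smallness of $\epsilon$ and $\delta$; everything else is the explicit formula (\ref{S^t}), the $\mathcal{D}_{t}$-cancellation, and a re-run of the kernel bookkeeping from Lemma \ref{Exten. of Calfisch}.
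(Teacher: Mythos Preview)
Your proof is correct and rests on the same two ingredients as the paper's --- the gradient bound $|\nabla_{x}\theta|\lesssim\delta\langle\xi\rangle^{\gamma-1}$ and the $\mathcal{D}_{t}$-cancellation --- but you package them differently. For (\ref{1}) the paper runs an energy estimate on $g(t)=\mathbb{S}^{t}g_{0}$: computing $\tfrac{1}{2}\tfrac{d}{dt}\Vert g\Vert_{L^{2}(\mu)}^{2}$ produces the transport term $\tfrac{1}{2}\int(\xi\cdot\nabla_{x}\mu)|g|^{2}$, which is bounded by $c\epsilon\delta\int\langle\xi\rangle^{\gamma}|g|^{2}\mu$ via the same gradient bound you isolate, and for $\epsilon\delta$ small is absorbed into the damping $-\int\nu(\xi)|g|^{2}\mu$; your pointwise weight comparison $e^{-2\nu(\xi)t}\mu(y+\xi t,\xi)\le e^{-2c_{\gamma}t}\mu(y,\xi)$ is simply the integrated form of this differential inequality. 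For (\ref{ml.1}) the paper derives the PDE satisfied by $y=\mathcal{D}_{t}g$, namely $\partial_{t}y+\xi\cdot\nabla_{x}y+\nu(\xi)y=-(\nabla_{\xi}\nu)g$ with $y(0)=\nabla_{\xi}g_{0}$, and applies Duhamel together with (\ref{1}); your explicit formula $\mathcal{D}_{t}\mathbb{S}^{t}g_{0}=-t\,(\nabla_{\xi}\nu)\,\mathbb{S}^{t}g_{0}+\mathbb{S}^{t}(\nabla_{\xi}g_{0})$ is exactly the Duhamel solution (since $(\nabla_{\xi}\nu)\mathbb{S}^{s}g_{0}$ commutes with $\mathbb{S}^{t-s}$ and the time integral collapses to a factor $t$). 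Your route is a shade more direct; the paper's energy/Duhamel presentation is marginally more robust if the damped transport operator is later perturbed. Your handling of (\ref{K-4}) matches the paper, which likewise defers to the Lemma~\ref{K-esti-2} machinery.
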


\begin{proof}
The estimate of (\ref{K-4}) can follow the same procedure as in Lemma \ref%
{K-esti-2} and hence we omit the details.

Denote $g\left( t\right) =\mathbb{S}^{t}g_{0}.$ Direct Computation shows
that
\begin{align*}
\frac{1}{2}\frac{d}{dt}\Vert g\Vert _{L^{2}(\mu )}^{2}& =\int \left( -\xi
\cdot \nabla _{x}g-\nu \left( \xi \right) g\right) g\mu dxd\xi =\frac{1}{2}%
\int \left( \xi \cdot \nabla _{x}\mu \right) \left\vert g\right\vert
^{2}dxd\xi -\int \nu \left( \xi \right) \left\vert g\right\vert ^{2}\mu
dxd\xi \\
& \leq \epsilon \delta c\int \left\langle \xi \right\rangle ^{\gamma
}\left\vert g\right\vert ^{2}\mu dxd\xi -\int \nu \left( \xi \right)
\left\vert g\right\vert ^{2}\mu dxd\xi ,
\end{align*}%
since $\left\vert \nabla _{x}\mu \right\vert \lesssim \epsilon \delta
\left\langle \xi \right\rangle ^{\gamma -1}$. After choosing $\epsilon ,\
\delta >0$ sufficiently small with $c\epsilon \delta <\nu _{0}$, we have%
\begin{equation*}
\frac{1}{2}\frac{d}{dt}\Vert g\Vert _{L^{2}(\mu )}^{2}\leq -c^{\prime }\int
\left\langle \xi \right\rangle ^{\gamma }\left\vert g\right\vert ^{2}\mu
dxd\xi =-c^{\prime }\Vert g_{0}\Vert _{L_{\sigma }^{2}(\mu )}^{2},
\end{equation*}%
for some constant $c^{\prime }=c\left( \gamma \right) >0.$ As a result,
\begin{equation}
\Vert g\Vert _{L^{2}(\mu )}\leq e^{-c_{\gamma }t}\Vert g_{0}\Vert
_{L^{2}(\mu )},  \label{2}
\end{equation}%
here the constant $c_{\gamma }>0$ for $0\leq \gamma <1$ and $c_{\gamma }=0$
for $-2<\gamma <0$.

Furthermore, set $y=\mathcal{D}_{t}\mathbb{S}^{t}g_{0}=\mathcal{D}_{t}g$ and
then $y$ satisfies the equation
\begin{equation*}
\left\{
\begin{array}{l}
\partial _{t}y+\xi \cdot \nabla _{x}y=-\nu \left( \xi \right) y-\left(
\nabla _{\xi }\nu \left( \xi \right) \right) g,%
\vspace{3mm}
\\
y\left( 0,x,\xi \right) =\left( \nabla _{\xi }g_{0}\right) \left( x,\xi
\right) .%
\end{array}%
\right.
\end{equation*}%
Immediately, by Duhamel's principle and $\left( \ref{2}\right) $
\begin{align*}
\left\Vert \mathcal{D}_{t}\mathbb{S}^{t}g_{0}\right\Vert _{L^{2}\left( \mu
\right) }& =\left\Vert y\right\Vert _{L^{2}\left( \mu \right) }\lesssim
e^{-c_{\gamma }t}\Vert \nabla _{\xi }g_{0}\Vert _{L^{2}(\mu
)}+\int_{0}^{t}e^{-c_{\gamma }\left( t-s\right) }\Vert g\left( s\right)
\Vert _{L^{2}(\mu )}ds \\
& \lesssim e^{-c_{\gamma }t}\left( \Vert \nabla _{\xi }g_{0}\Vert
_{L^{2}(\mu )}+t\Vert g_{0}\Vert _{L^{2}(\mu )}\right) \lesssim \left(
1+t\right) e^{-c_{\gamma }t}\Vert g_{0}\Vert _{L_{x}^{2}H_{\xi }^{1}(\mu )}.
\end{align*}
\end{proof}

We are now in the position to get the regularization estimate of $h^{(6)}$.
We find that without any regularity assumption on the initial condition, $%
h^{(6)}$ has $H_{x}^{2}$ regularity automatically.

\begin{lemma}[Regularization estimate on $h^{(6)}$]
\label{Regularity}
\begin{equation*}
\Vert h^{(6)}\Vert _{H_{x}^{2}L_{\xi }^{2}\left( \mu \right) }\lesssim
t^{4}\left( 1+t\right) ^{2}e^{-c_{\gamma }t}\Vert f_{0}\Vert _{L^{2}(\mu )},
\end{equation*}%
here $c_{\gamma }>0$ for $0\leq \gamma <1$ and $c_{\gamma }=0$ for $%
-2<\gamma <0$.
\end{lemma}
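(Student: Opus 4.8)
The plan is to iterate the representation of $h^{(6)}$ in terms of the operators $\mathbb{S}^t$ and $K$ and to use the commuting differential operator $\mathcal{D}_t$ to trade velocity regularity coming from $K$ for spatial regularity, exactly in the spirit of Lemma \ref{Operators-KSD}. Recall that, by solving the transport equations \eqref{bot.3.d} successively via Duhamel's formula, $h^{(6)}$ can be written as a $6$-fold time-ordered integral
\begin{equation*}
h^{(6)}(t)=\int_{0<s_1<\dots<s_6<t}\mathbb{S}^{t-s_6}K\,\mathbb{S}^{s_6-s_5}K\cdots K\,\mathbb{S}^{s_1}f_0\;ds_1\cdots ds_6,
\end{equation*}
a composition of seven copies of $\mathbb{S}^{\cdot}$ alternating with six copies of $K$. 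Since we want $H^2_x$ regularity and each application of the pair $(\mathbb{S}^t,K)$ can be made to produce one extra $x$-derivative via $\mathcal{D}_t$, using six copies of $K$ gives ample room (two would in principle suffice, the extra four absorbing the velocity weights $\mu$ and the $\nu$-loss for soft potentials).

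The key mechanism is the identity $\mathcal{D}_t\,\mathbb{S}^t = \mathbb{S}^t\,\nabla_\xi$ up to lower-order terms (made precise in the proof of \eqref{ml.1}): because $[\mathcal{D}_t,\partial_t+\xi\cdot\nabla_x]=0$, applying $\mathcal{D}_t$ to a composition $\mathbb{S}^{t-s}K(\cdots)$ and commuting it through the free streaming converts it into $\nabla_\xi$ hitting whatever sits to the right, i.e.\ it lands on the kernel of $K$. The crucial structural fact, from Lemma \ref{basic} and \eqref{K-4}, is that $K$ maps $L^2(\mu)$ into $L^2_xH^1_\xi(\mu)$ and that $K\nabla_\xi$ is still bounded on $L^2(\mu)$ — this is precisely where $\gamma>-2$ is used. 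Carrying out this scheme twice: first I would estimate $\mathcal{D}_{t}^{\,\alpha_1}h^{(6)}$ for $|\alpha_1|\le 2$ by distributing the $\mathcal{D}_t$'s (or rather the shifted $\mathcal{D}_{t-s_j}$'s, accounting for the time-shifts in the composition) onto two of the $\mathbb{S}$-factors, thereby turning them into $\nabla_\xi$ acting on adjacent $K$-factors; using \eqref{Dt-S-K} for those two factors and \eqref{1}, \eqref{K-3} for the remaining ones, and integrating out $s_1,\dots,s_6$ over the simplex, one gets
\begin{equation*}
\sum_{|\alpha_1|\le 2}\|\mathcal{D}_t^{\,\alpha_1}h^{(6)}\|_{L^2(\mu)}\lesssim t^{4}(1+t)^{2}e^{-c_\gamma t}\|f_0\|_{L^2(\mu)},
\end{equation*}
where the $t^4$ comes from four "bare" simplex integrations producing $s_j^{k}$ factors, and $(1+t)^2$ from the two $\mathcal{D}_t$-activated factors via \eqref{ml.1}. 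Second, I would convert the $\mathcal{D}_t$-bound into an $\partial_x$-bound: since $\mathcal{D}_t = t\nabla_x + \nabla_\xi$, we have $\nabla_x h^{(6)} = t^{-1}\big(\mathcal{D}_t h^{(6)} - \nabla_\xi h^{(6)}\big)$ for $t>0$, and the $\nabla_\xi h^{(6)}$ term is itself controlled (one distributes a $\nabla_\xi$ onto the leftmost $\mathbb{S}^{t-s_6}$, producing a harmless $\nu'$-factor plus $\nabla_\xi$ hitting $K$, again handled by \eqref{K-4}); iterating once more for the second derivative and combining gives $\|h^{(6)}\|_{H^2_xL^2_\xi(\mu)}\lesssim t^4(1+t)^2 e^{-c_\gamma t}\|f_0\|_{L^2(\mu)}$ for $t\ge 1$, while for $t\le 1$ one argues directly using the finitely many $\nabla_\xi$-transfers without the $t^{-1}$ division (the bound is trivially true there since the right-hand side stays bounded below).

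The main obstacle I anticipate is the bookkeeping of the \emph{time-shifted} operators: in the $6$-fold composition the natural differential operator commuting with the $j$-th free-streaming block is $\mathcal{D}_{t-s_j}$, not a single fixed $\mathcal{D}_t$, so one must track how $\mathcal{D}_t$ decomposes as $\mathcal{D}_{s} + (t-s)\nabla_x$ when passed across an $\mathbb{S}^{t-s}$ factor, and ensure the extra $\nabla_x$ pieces generated this way are themselves reabsorbed by spare $K$-factors rather than proliferating. A secondary technical point is that the weight $\mu$ does not commute with $K$ or with $\nabla_\xi$, but Lemma \ref{K-esti-2} and \eqref{K-4} are stated precisely to absorb the resulting commutators at the cost of the harmless small constant, so no coercivity is lost. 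Once the combinatorics of the $\mathcal{D}_{t-s_j}$'s is organized, everything else reduces to the already-proven estimates \eqref{K-4}, \eqref{1}, \eqref{ml.1}, \eqref{Dt-S-K} together with the elementary simplex integration $\int_{0<s_1<\dots<s_6<t}\prod(1+s_j)\,ds \lesssim t^4(1+t)^2$.
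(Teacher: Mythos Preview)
Your overall strategy—iterating the Duhamel representation and using the commuting operator $\mathcal{D}_t$ together with the $\xi$-smoothing of $K$ to produce $x$-regularity—is exactly the mechanism the paper uses. The gap is in how you convert $\mathcal{D}_t$-control into $\nabla_x$-control. Your plan is to first bound $\mathcal{D}_t^{\alpha}h^{(6)}$ and then write $\nabla_x h^{(6)}=t^{-1}(\mathcal{D}_t h^{(6)}-\nabla_\xi h^{(6)})$, absorbing the $t^{-1}$ for large $t$ and ``arguing directly'' for $t\le 1$ because ``the right-hand side stays bounded below''. That last claim is simply false: the right-hand side $t^4(1+t)^2e^{-c_\gamma t}\|f_0\|_{L^2(\mu)}$ vanishes like $t^4$ as $t\to 0$, so you must actually \emph{prove} that $\|\nabla_x^2 h^{(6)}\|_{L^2(\mu)}\lesssim t^4$ for small $t$, and your outlined argument does not do this. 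Moreover, the ``bookkeeping obstacle'' you anticipate—$\mathcal{D}_t=\mathcal{D}_s+(t-s)\nabla_x$ generating extra $\nabla_x$ pieces—is a symptom of going in the wrong direction: pushing $\mathcal{D}_t$ through the composition is circular, and pushing $\nabla_\xi$ through $\mathbb{S}^{t-s}$ likewise regenerates $\nabla_x$-terms.

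The paper avoids both problems by reversing the order of operations. Since $\nabla_x$ commutes with every $\mathbb{S}^s$ and every $K$, one first commutes $\nabla_x$ freely to any position in the six-fold product—no proliferation occurs. Then, at the chosen spot, one writes $(s_i-s_{i+1})\nabla_x=\mathcal{D}_{s_i-s_{i+1}}-\nabla_\xi$ and applies \eqref{Dt-S-K} and \eqref{K-4} locally (the $\nabla_\xi$ lands on an adjacent $K$, never on $f_0$). The key device that produces the correct $t^4$ behaviour is the partition
\[
1=\frac{s_1-s_2}{s_1-s_3}+\frac{s_2-s_3}{s_1-s_3},
\]
which distributes a single $\nabla_x$ over two consecutive $\mathbb{S}$-factors; the resulting integrand carries the factor $(s_1-s_3)^{-1}$, but integrating $ds_2$ over $(s_3,s_1)$ cancels it exactly, leaving a five-fold simplex integral $\sim t^5$. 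Doing this twice (once with $s_1,s_2,s_3$ and once with $s_4,s_5,s_6$) gives $\int(1+\frac{1}{s_1-s_3})(1+\frac{1}{s_4-s_6})\,ds\lesssim t^4+t^5+t^6$, which is the origin of $t^4(1+t)^2$. This is the missing idea in your proposal.
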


\begin{proof}
It follows immediately from Lemma \ref{Operators-KSD} that%
\begin{equation*}
\Vert h^{(6)}\Vert _{L^{2}\left( \mu \right) }\lesssim t^{6}e^{-c_{\gamma
}t}\Vert f_{0}\Vert _{L^{2}(\mu )}.
\end{equation*}

Next, we prove the estimate for the first $x$-derivative of $h^{(6)}$.
Notice that%
\begin{align*}
& \quad \nabla _{x}h^{(6)}(t) \\
& =\nabla
_{x}\int_{0}^{t}\int_{0}^{s_{1}}\int_{0}^{s_{2}}\int_{0}^{s_{3}}%
\int_{0}^{s_{4}}\int_{0}^{s_{5}}\frac{s_{1}-s_{2}}{s_{1}-s_{3}}\mathbb{S}%
^{t-s_{1}}K\mathbb{S}^{s_{1}-s_{2}}K\mathbb{S}^{s_{2}-s_{3}}K\mathbb{S}%
^{s_{3}-s_{4}}\,K\mathbb{S}^{s_{4}-s_{5}}K\mathbb{S}^{s_{5}-s_{6}}K\mathbb{S}%
^{s_{6}}f_{0}\,ds \\
& \quad +\nabla _{x}\int_{0}^{t}\int_{0}^{s_{1}}\int_{0}^{s_{2}}\frac{%
s_{2}-s_{3}}{s_{1}-s_{3}}\mathbb{S}^{t-s_{1}}K\mathbb{S}^{s_{1}-s_{2}}K%
\mathbb{S}^{s_{2}-s_{3}}K\mathbb{S}^{s_{3}-s_{4}}\,K\mathbb{S}^{s_{4}-s_{5}}K%
\mathbb{S}^{s_{5}-s_{6}}K\mathbb{S}^{s_{6}}f_{0}\,ds \\
&
=\int_{0}^{t}\int_{0}^{s_{1}}\int_{0}^{s_{2}}\int_{0}^{s_{3}}%
\int_{0}^{s_{4}}\int_{0}^{s_{5}}\frac{1}{s_{1}-s_{3}}A\left(
s_{1},s_{2},\ldots ,s_{6},x,\xi ,t\right) \,ds,
\end{align*}%
where $ds=ds_{6}ds_{5}ds_{4}ds_{3}ds_{2}ds_{1}$ and
\begin{align*}
& A\left( s_{1},s_{2},\ldots ,s_{6},x,\xi ,t\right) \\
& =\mathbb{S}^{t-s_{1}}K\left( \mathcal{D}_{s_{1}-s_{2}}-\nabla _{\xi
}\right) \mathbb{S}^{s_{1}-s_{2}}K\mathbb{S}^{s_{2}-s_{3}}K\mathbb{S}%
^{s_{3}-s_{4}}\,K\mathbb{S}^{s_{4}-s_{5}}K\mathbb{S}^{s_{5}-s_{6}}K\mathbb{S}%
^{s_{6}}f_{0} \\
& +\mathbb{S}^{t-s_{1}}K\mathbb{S}^{s_{1}-s_{2}}K\left( \mathcal{D}%
_{s_{2}-s_{3}}-\nabla _{\xi }\right) \mathbb{S}^{s_{2}-s_{3}}K\mathbb{S}%
^{s_{3}-s_{4}}\,K\mathbb{S}^{s_{4}-s_{5}}K\mathbb{S}^{s_{5}-s_{6}}K\mathbb{S}%
^{s_{6}}f_{0}.
\end{align*}%
From Lemma \ref{Operators-KSD}, it follows that
\begin{align*}
\left\Vert \nabla _{x}h^{(6)}(t)\right\Vert _{L^{2}(\mu )}& \lesssim
e^{-c_{\gamma
}t}\int_{0}^{t}\int_{0}^{s_{1}}\int_{0}^{s_{2}}\int_{0}^{s_{3}}%
\int_{0}^{s_{4}}\int_{0}^{s_{5}}\left( \frac{1}{s_{1}-s_{3}}+1\right)
\left\Vert f_{0}\right\Vert _{L^{2}(\mu )}ds \\
& \lesssim e^{-c_{\gamma }t}\left[ \int_{0}^{t}\int_{0}^{s_{1}}%
\int_{s_{3}}^{s_{1}}\frac{s_{3}^{3}}{s_{1}-s_{3}}ds_{2}ds_{3}ds_{1}+t^{6}%
\right] \left\Vert f_{0}\right\Vert _{L^{2}(\mu )} \\
& \lesssim \left( t^{6}+t^{5}\right) e^{-c_{\gamma }t}\Vert f_{0}\Vert
_{L^{2}(\mu )}\,.
\end{align*}%
Similarly, we have
\begin{align*}
& \nabla _{x}^{2}h^{(6)}(t) \\
& =\nabla
_{x}^{2}\int_{0}^{t}\int_{0}^{s_{1}}\int_{0}^{s_{2}}\int_{0}^{s_{3}}%
\int_{0}^{s_{4}}\int_{0}^{s_{5}} \\
& \left[ \frac{\left( s_{1}-s_{2}+s_{2}-s_{3}\right) }{s_{1}-s_{3}}\frac{%
(s_{4}-s_{5}+s_{5}-s_{6})}{s_{4}-s_{6}}\mathbb{S}^{t-s_{1}}K\mathbb{S}%
^{s_{1}-s_{2}}K\mathbb{S}^{s_{2}-s_{3}}K\mathbb{S}^{s_{3}-s_{4}}\,K\mathbb{S}%
^{s_{4}-s_{5}}K\mathbb{S}^{s_{5}-s_{6}}K\mathbb{S}^{s_{6}}f_{0}\,\right] ds
\\
&
=\int_{0}^{t}\int_{0}^{s_{1}}\int_{0}^{s_{2}}\int_{0}^{s_{3}}%
\int_{0}^{s_{4}}\int_{0}^{s_{5}}\frac{1}{(s_{1}-s_{3})(s_{4}-s_{6})}B\left(
s_{1},s_{2},\ldots ,s_{6},x,\xi ,t\right) \,ds
\end{align*}%
where $ds=ds_{6}ds_{5}ds_{4}ds_{3}ds_{2}ds_{1}$ and
\begin{align*}
& B\left( s_{1},s_{2},\ldots ,s_{6},x,\xi ,t\right) \\
& =\mathbb{S}^{t-s_{1}}K\left( \mathcal{D}_{s_{1}-s_{2}}-\nabla _{\xi
}\right) \mathbb{S}^{s_{1}-s_{2}}K\mathbb{S}^{s_{2}-s_{3}}K\mathbb{S}%
^{s_{3}-s_{4}}\,K\left( \mathcal{D}_{s_{4}-s_{5}}-\nabla _{\xi }\right)
\mathbb{S}^{s_{4}-s_{5}}K\mathbb{S}^{s_{5}-s_{6}}K\mathbb{S}^{s_{6}}f_{0} \\
& +\mathbb{S}^{t-s_{1}}K\mathbb{S}^{s_{1}-s_{2}}K\left( \mathcal{D}%
_{s_{1}-s_{2}}-\nabla _{\xi }\right) \mathbb{S}^{s_{2}-s_{3}}K\mathbb{S}%
^{s_{3}-s_{4}}\,K\mathbb{S}^{s_{4}-s_{5}}K\left( \mathcal{D}%
_{s_{5}-s_{6}}-\nabla _{\xi }\right) \mathbb{S}^{s_{5}-s_{6}}K\mathbb{S}%
^{s_{6}}f_{0} \\
& +\mathbb{S}^{t-s_{1}}K\mathbb{S}^{s_{1}-s_{2}}K\left( \mathcal{D}%
_{s_{2}-s_{3}}-\nabla _{\xi }\right) \mathbb{S}^{s_{2}-s_{3}}K\mathbb{S}%
^{s_{3}-s_{4}}\,K\left( \mathcal{D}_{s_{4}-s_{5}}-\nabla _{\xi }\right)
\mathbb{S}^{s_{4}-s_{5}}K\mathbb{S}^{s_{5}-s_{6}}K\mathbb{S}^{s_{6}}f_{0} \\
& +\mathbb{S}^{t-s_{1}}K\mathbb{S}^{s_{1}-s_{2}}K\left( \mathcal{D}%
_{s_{2}-s_{3}}-\nabla _{\xi }\right) \mathbb{S}^{s_{2}-s_{3}}K\mathbb{S}%
^{s_{3}-s_{4}}\,K\mathbb{S}^{s_{4}-s_{5}}K\left( \mathcal{D}%
_{s_{5}-s_{6}}-\nabla _{\xi }\right) \mathbb{S}^{s_{5}-s_{6}}K\mathbb{S}%
^{s_{6}}f_{0}\,.
\end{align*}%
By Lemma \ref{Operators-KSD} again, we deduce
\begin{align*}
& \quad \left\Vert \nabla _{x}^{2}h^{(6)}(t)\right\Vert _{L^{2}(\mu )} \\
& \lesssim e^{-c_{\gamma }t}\left\Vert f_{0}\right\Vert _{L^{2}(\mu
)}\int_{0}^{t}\int_{0}^{s_{1}}\int_{0}^{s_{2}}\int_{0}^{s_{3}}%
\int_{0}^{s_{4}}\int_{0}^{s_{5}}\left( 1+\frac{1}{s_{1}-s_{3}}\right) \left(
1+\frac{1}{s_{4}-s_{6}}\right) ds \\
& \lesssim e^{-c_{\gamma }t}\left( t^{6}+t^{5}+t^{4}\right) \Vert f_{0}\Vert
_{L^{2}(\mu )}\,.
\end{align*}%
Therefore, $\Vert h^{(6)}\Vert _{H_{x}^{2}L_{\xi }^{2}\left( \mu \right)
}\lesssim t^{4}\left( 1+t\right) ^{2}e^{-c_{\gamma }t}\Vert f_{0}\Vert
_{L^{2}(\mu )}\,.$
\end{proof}

As a consequence, owing to (\ref{R^(6)}) and Lemma \ref{Regularity}, we find
\begin{equation}
\Vert \mathcal{R}^{(6)}\Vert _{H_{x}^{2}L_{\xi }^{2}(\mu)}\leq
\int_{0}^{t}\Vert h^{(6)}\left( s\right) \Vert _{H_{x}^{2}L_{\xi
}^{2}(\mu)}ds\lesssim\left\{
\begin{array}{ll}
\left \{t^{5}\wedge 1\}\Vert f_{0}\right\Vert _{L^{2}(\mu) },%
\vspace {3mm}
& 0\leq \gamma <1, \\
t^{5}(1+t)^{2}\left\Vert f_{0}\right\Vert _{L^{2} (\mu)}, & -2<\gamma <0.%
\end{array}%
\right.  \label{R^(6)-1}
\end{equation}%
Here $\{a\wedge b\}=\min\{a,b\}$.


\section{Global wave structures}

\label{Global}

In this section we will complete the proof of Theorem \ref{thm:main} by
discussing the global wave structures inside the finite Mach number region
and outside the finite Mach number region separately.

\subsection{Inside the finite Mach number region}

By the long wave-short wave decomposition and wave-remainder decomposition,
we have
\begin{equation*}
f=f_{L}+f_{S}=W^{(6)}+\mathcal{R}^{(6)}\,.
\end{equation*}%
We now define the tail part as $f_{R}=\mathcal{R}^{(6)}-f_{L}=f_{S}-W^{(6)}$%
. Therefore $f$ can be rewritten as $f=f_{L}+W^{\left( 6\right) }+f_{R}.$

From Propositions \ref{LS-estimate1}-\ref{LS-estimate2} and Lemma \ref%
{pointwise-h}, the pointwise estimate of the long wave part $f_{L}$ and the
wave part $W^{(6)}$ are completed. It remains to study the tail part $f_{R}$%
. It is easy to see that
\begin{equation*}
\Vert f_{R}\Vert _{_{H_{x}^{2}L_{\xi }^{2}}}=\Vert (\mathcal{R}%
^{(6)}-f_{L})\Vert _{H_{x}^{2}L_{\xi }^{2}}\lesssim \left\{
\begin{array}{l}
\displaystyle\Vert f_{0}\Vert _{L^{2}},\quad \hbox{for}\quad 0\leq \gamma
<1\,, \\
\\
\displaystyle(1+t)^{7}\Vert f_{0}\Vert _{L^{2}},\quad \hbox{for}\quad
-2<\gamma <0,%
\end{array}%
\right.
\end{equation*}%
duo to (\ref{R^(6)-1}), and using Propositions \ref{LS-estimate1}-\ref%
{LS-estimate2} and Lemma \ref{L2-h} gives
\begin{equation*}
\Vert f_{R}\Vert _{L^{2}}=\Vert f_{S}-W^{(6)}\Vert _{L^{2}}\lesssim \left\{
\begin{array}{l}
\displaystyle e^{-Ct}\Vert f_{0}\Vert _{L^{2}}\,,\quad \hbox{for}\quad 0\leq
\gamma <1\,, \\
\\
\displaystyle e^{-c\alpha ^{\frac{-\gamma }{p-\gamma }}t^{\frac{p}{p-\gamma }%
}}\left\Vert f_{0}\right\Vert _{L^{2}\left( e^{7\alpha \left\vert \xi
\right\vert ^{p}}\right) }\quad \hbox{for}\quad -2<\gamma <0\,,%
\end{array}%
\right.
\end{equation*}%
for some constants $C,c>0$. The Sobolev inequality \cite[Theorem 5.8]%
{[Adams]} implies
\begin{equation}
|f_{R}|_{L_{\xi }^{2}}\leq \left\Vert f_{R}\right\Vert _{L_{\xi
}^{2}L_{x}^{\infty }}\lesssim \Vert f_{R}\Vert _{H_{x}^{2}L_{\xi
}^{2}}^{3/4}\Vert f_{R}\Vert _{L^{2}}^{1/4}\lesssim \left\{
\begin{array}{l}
\displaystyle e^{-\frac{1}{4}Ct}\Vert f_{0}\Vert _{L^{2}}\,,\quad \hbox{for}%
\quad 0\leq \gamma <1\,, \\
\\
\displaystyle e^{-\frac{1}{8}c\alpha ^{\frac{-\gamma }{p-\gamma }}t^{\frac{p%
}{p-\gamma }}}\left\Vert f_{0}\right\Vert _{L^{2}\left( e^{7\alpha
\left\vert \xi \right\vert ^{p}}\right) },\quad \hbox{for}\quad -2<\gamma
<0\,.%
\end{array}%
\right.  \label{f_{R}}
\end{equation}

Combining Propositions \ref{LS-estimate1}-\ref{LS-estimate2}, Lemma \ref%
{pointwise-h} and (\ref{f_{R}}), we obtain the pointwise estimate for the
solution inside the finite Mach number region.

\begin{proposition}
Let $f$ be the solution to the linearized Boltzmann equation \eqref{bot.1.d}
and let $\mathbf{v}=\sqrt{5/3}$ be the sound speed associated with the
normalized global Maxwellian. Then

\begin{enumerate}
\item As $0\leq \gamma <1$, for any given positive integer $N$, and any
given $0<p\leq 2$, $\beta>3/2$, sufficiently small $\alpha>0$, there exist
positive constants $C_{N}$, $C$ and $c_{0}$ such that
\begin{equation*}
\left\vert f(t,x,\cdot )\right\vert _{L_{\xi }^{2}}\leq C_{N}\left[
\begin{array}{l}
\left( 1+t\right) ^{-2}\left( 1+\frac{\left( \left\vert x\right\vert -%
\mathbf{v}t\right) ^{2}}{1+t}\right) ^{-N}+\left( 1+t\right) ^{-3/2}\left( 1+%
\frac{\left\vert x\right\vert ^{2}}{1+t}\right) ^{-N} \\[2mm]
+\mathbf{1}_{\{\left\vert x\right\vert \leq \mathbf{v}t\}}\left( 1+t\right)
^{-3/2}\left( 1+\frac{\left\vert x\right\vert ^{2}}{1+t}\right) ^{-3/2} \\%
[2mm]
+e^{-c_{0}\left( t+\alpha ^{\frac{1-\gamma }{p+1-\gamma }}\left\vert
x\right\vert ^{\frac{p}{p+1-\gamma }}\right) }+e^{-t/C}%
\end{array}%
\right] ||f_{0}||_{I}\,.
\end{equation*}

\item As $-2<\gamma <0$, for any given $0<p\leq 2$, $\beta>3/2$ and
sufficiently small $\alpha>0$, there exist positive constants $C$, $c$ and $%
c_{0}$ such that
\begin{equation*}
\left\vert f(t,x,\cdot )\right\vert _{L_{\xi }^{2}}\leq C\left[
\begin{array}{l}
\left( 1+t\right) ^{-3/2}+e^{-c\alpha ^{\frac{-\gamma }{p-\gamma }}t^{\frac{p%
}{p-\gamma }}} \\[2mm]
+e^{-c_{0}\left( \alpha ^{\frac{-\gamma }{p-\gamma }}t^{\frac{p}{p-\gamma }%
}+\alpha ^{\frac{1-\gamma }{p+1-\gamma }}\left\vert x\right\vert ^{\frac{p}{%
p+1-\gamma }}\right) }%
\end{array}%
\right] ||f_{0}||_{I}\,.
\end{equation*}
\end{enumerate}

Here $\mathbf{1}_{\{\cdot \}}$ is the indicator function and
\begin{equation*}
||f_{0}||_{I}\equiv \max \left\{ \left\Vert f_{0}\right\Vert _{L^{2}\left(
e^{7\alpha \left\vert \xi \right\vert ^{p}}\right) },\left\Vert
f_{0}\right\Vert _{L_{x}^{1}L_{\xi }^{2}},\left\Vert f_{0}\right\Vert
_{L_{x}^{\infty }L_{\xi }^{\infty }\left( e^{7\alpha \left\vert \xi
\right\vert ^{p}}\left<\xi\right>^{\beta}\right) }\right\} .
\end{equation*}
\end{proposition}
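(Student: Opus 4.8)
The plan is to combine the two decompositions already established, $f=f_{L}+f_{S}$ from the long wave--short wave splitting and $f=W^{(6)}+\mathcal{R}^{(6)}$ from the wave--remainder splitting, and to write
\[
f=f_{L}+W^{(6)}+f_{R},\qquad f_{R}:=\mathcal{R}^{(6)}-f_{L}=f_{S}-W^{(6)}.
\]
It then suffices to bound $\left\vert f_{L}\right\vert _{L_{\xi }^{2}}$, $\left\vert W^{(6)}\right\vert _{L_{\xi }^{2}}$ and $\left\vert f_{R}\right\vert _{L_{\xi }^{2}}$ separately, matching each to one group of terms on the right-hand side, and to check that every norm of $f_{0}$ that appears is controlled by $||f_{0}||_{I}$. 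The latter is elementary: $\Vert f_{0}\Vert _{L^{2}}\leq \Vert f_{0}\Vert _{L^{2}(e^{7\alpha \left\vert \xi \right\vert ^{p}})}$, and since $f_{0}$ is supported in $\left\vert x\right\vert \leq 1$, the Cauchy--Schwarz inequality in $x$ gives $\Vert f_{0}\Vert _{L_{x}^{1}L_{\xi }^{2}(e^{\alpha \left\vert \xi \right\vert ^{p}})}\lesssim \Vert f_{0}\Vert _{L^{2}(e^{7\alpha \left\vert \xi \right\vert ^{p}})}$; the unweighted norm $\Vert f_{0}\Vert _{L_{x}^{1}L_{\xi }^{2}}$ and the weighted sup norm are already members of $||f_{0}||_{I}$.

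\emph{Long wave part.} For $0\leq \gamma <1$ I would further split $f_{L}=f_{L;0}+f_{L;\perp }$. Applying Proposition~\ref{LS-estimate1}(a) with a Mach number $\mathbb{M}$ chosen so large that $(\mathbb{M}+1)\mathbf{v}\geq 2M$ --- so that the region $\left\langle x\right\rangle \leq 2Mt$ under consideration is covered --- produces exactly the first three displayed terms, with norm $\Vert f_{0}\Vert _{L_{x}^{1}L_{\xi }^{2}}$. For $f_{L;\perp }$, Proposition~\ref{LS-estimate1}(b) with $s=2$ gives $\Vert f_{L;\perp }\Vert _{H_{x}^{2}L_{\xi }^{2}}\lesssim e^{-ct}\Vert f_{0}\Vert _{L^{2}}$, and the Sobolev embedding $H_{x}^{2}\hookrightarrow L_{x}^{\infty }$ turns this into the pointwise-in-$x$ bound $\left\vert f_{L;\perp }\right\vert _{L_{\xi }^{2}}\lesssim e^{-ct}\Vert f_{0}\Vert _{L^{2}}$, which is absorbed into the $e^{-t/C}$ term. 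For $-2<\gamma <0$, Proposition~\ref{LS-estimate2}(a) directly yields $\Vert f_{L}\Vert _{L_{x}^{\infty }L_{\xi }^{2}}\lesssim (1+t)^{-3/2}\Vert f_{0}\Vert _{L_{x}^{1}L_{\xi }^{2}(e^{\alpha \left\vert \xi \right\vert ^{p}})}$, i.e. the $(1+t)^{-3/2}$ term.

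\emph{Wave part and tail part.} Writing $W^{(6)}=\sum_{j=0}^{6}h^{(j)}$ and using $\beta >3/2$, so that $\left\vert h^{(j)}\right\vert _{L_{\xi }^{2}}\leq |\left\langle \xi \right\rangle ^{-\beta }|_{L_{\xi }^{2}}\,\left\vert h^{(j)}\right\vert _{L_{\xi }^{\infty }(\left\langle \xi \right\rangle ^{\beta })}$, I would apply Lemma~\ref{pointwise-h}. Since $j+1\leq 7$, the velocity weight $e^{(j+1)\alpha \left\vert \xi \right\vert ^{p}}$ there is dominated by $e^{7\alpha \left\vert \xi \right\vert ^{p}}$, so each term is bounded by $t^{j}e^{-c_{0}(\cdots )}\Vert f_{0}\Vert _{L_{x}^{\infty }L_{\xi }^{\infty }(e^{7\alpha \left\vert \xi \right\vert ^{p}}\left\langle \xi \right\rangle ^{\beta })}$, with exponent $t+\alpha ^{\frac{1-\gamma }{p+1-\gamma }}\left\vert x\right\vert ^{\frac{p}{p+1-\gamma }}$ when $0\leq \gamma <1$ and $\alpha ^{\frac{-\gamma }{p-\gamma }}t^{\frac{p}{p-\gamma }}+\alpha ^{\frac{1-\gamma }{p+1-\gamma }}\left\vert x\right\vert ^{\frac{p}{p+1-\gamma }}$ when $-2<\gamma <0$. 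Summing over $0\leq j\leq 6$ and absorbing the prefactor $(1+t)^{6}$ into a fraction of the time part of the exponential (legitimate since both $t$ and $t^{p/(p-\gamma )}$ tend to $\infty $) gives exactly the $e^{-c_{0}(\cdots )}$ terms, after renaming $c_{0}$. Finally, the tail part is precisely \eqref{f_{R}}: $\left\vert f_{R}\right\vert _{L_{\xi }^{2}}\lesssim e^{-\frac{1}{4}Ct}\Vert f_{0}\Vert _{L^{2}}$ for $0\leq \gamma <1$, feeding the $e^{-t/C}$ term, and $\left\vert f_{R}\right\vert _{L_{\xi }^{2}}\lesssim e^{-\frac{1}{8}c\alpha ^{\frac{-\gamma }{p-\gamma }}t^{\frac{p}{p-\gamma }}}\Vert f_{0}\Vert _{L^{2}(e^{7\alpha \left\vert \xi \right\vert ^{p}})}$ for $-2<\gamma <0$, feeding the $e^{-c\alpha ^{\frac{-\gamma }{p-\gamma }}t^{\frac{p}{p-\gamma }}}$ term. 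Adding the three contributions and renaming constants closes the proof.

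Since essentially all the analysis is already contained in Propositions~\ref{LS-estimate1}--\ref{LS-estimate2} and Lemmas~\ref{pointwise-h}, \ref{L2-h}, \ref{Regularity} (through \eqref{f_{R}}), there is no genuine obstacle here; the only points requiring care are (i) choosing the Mach number in Proposition~\ref{LS-estimate1}(a) large enough to cover $\left\langle x\right\rangle \leq 2Mt$, (ii) verifying that every $f_{0}$-norm produced sits under $||f_{0}||_{I}$, and (iii) absorbing the polynomial-in-$t$ prefactors from the six-fold iteration into the exponential (or sub-exponential) decay factors.
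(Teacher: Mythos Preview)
Your proposal is correct and follows the same approach as the paper: write $f=f_{L}+W^{(6)}+f_{R}$, invoke Propositions~\ref{LS-estimate1}--\ref{LS-estimate2} for $f_{L}$ (with Sobolev embedding for $f_{L;\perp}$), Lemma~\ref{pointwise-h} for $W^{(6)}$ (using $\beta>3/2$ to pass from $L_{\xi}^{\infty}(\langle\xi\rangle^{\beta})$ to $L_{\xi}^{2}$), and the interpolation estimate \eqref{f_{R}} for the tail $f_{R}$, then check that all $f_{0}$-norms are dominated by $||f_{0}||_{I}$. The paper's own proof is exactly this combination, stated in one sentence immediately before the Proposition; your write-up simply fills in the routine details (choice of Mach number, absorption of the $(1+t)^{6}$ prefactor) that the paper leaves implicit.
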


\subsection{Outside the finite Mach number region}

In the previous section we have well investigated the pointwise behavior for
the wave part $W^{(6)}$ (see Lemma \ref{pointwise-h}). To clarify the wave
structure outside the finite Mach number region, we still need to estimate
the remainder part $\mathcal{R}^{(6)}$. Here, the weighted energy estimate
plays a decisive role; especially, we have to refine the regularization
estimate by the domain decomposition. Consider the weight
\begin{equation}
w\left( t,x,\xi \right) =\exp \left( \epsilon \rho \left( t,x,\xi \right)
/2\right) ,  \label{weight}
\end{equation}%
with%
\begin{align*}
\rho \left( t,x,\xi \right) & =5\left( \delta \left( \left\langle
x\right\rangle -Mt\right) \right) ^{\frac{p}{p+1-\gamma }}\left( 1-\chi
\left( \frac{\delta \left( \left\langle x\right\rangle -Mt\right) }{%
\left\langle \xi \right\rangle ^{p+1-\gamma }}\right) \right) \\
& +\left[ \left( 1-\chi \left( \frac{\delta \left( \left\langle
x\right\rangle -Mt\right) }{\left\langle \xi \right\rangle ^{p+1-\gamma }}%
\right) \right) \left[ \delta \left( \left\langle x\right\rangle -Mt\right) %
\right] \left\langle \xi \right\rangle ^{\gamma -1}+3\left\langle \xi
\right\rangle ^{p}\right] \chi \left( \frac{\delta \left( \left\langle
x\right\rangle -Mt\right) }{\left\langle \xi \right\rangle ^{p+1-\gamma }}%
\right) .
\end{align*}%
where $\epsilon ,$ $\delta >0$ will be chosen sufficiently small and $M>0$
large enough later on. We define
\begin{equation*}
H_{+}=\{\left( x,\xi \right) :\delta \left( \left\langle x\right\rangle
-Mt\right) >2\left\langle \xi \right\rangle ^{p+1-\gamma }\},
\end{equation*}%
\begin{equation*}
H_{0}=\{\left( x,\xi \right) :\left\langle \xi \right\rangle ^{p+1-\gamma
}\leq \delta \left( \left\langle x\right\rangle -Mt\right) \leq
2\left\langle \xi \right\rangle ^{p+1-\gamma }\},
\end{equation*}%
and%
\begin{equation*}
H_{-}=\{\left( x,\xi \right) :\delta \left( \left\langle x\right\rangle
-Mt\right) <\left\langle \xi \right\rangle ^{p+1-\gamma }\}.
\end{equation*}%
To go further, we need to estimate $\left\vert \int \left\langle g,\left(
K_{\epsilon }-K\right) g\right\rangle _{\xi }dx\right\vert $, where $%
K_{\epsilon }=e^{\epsilon \rho \left( t,x,\xi \right) }Ke^{-\epsilon \rho
\left( t,x,\xi \right) }.$ This estimate will be used in the weighted energy
estimate of $\mathcal{R}^{(6)}$ (Proposition \ref{weighted-remainder}). For
simplicity of notations, let $\mathrm{P}_{0}g=\sum_{j=0}^{4}b_{j}\chi _{j}$,
$b_{j}=\left\langle g,\chi _{j}\right\rangle _{\xi }$.

\begin{lemma}
\label{K-epsilon}Let $0<p\leq2$. There exists a constant $C=C\left(
\gamma,p\right) >0$ such that for any $0<\epsilon\ll1$,
\begin{align}
\left\vert \int\left\langle g,\left( K_{\epsilon}-K\right) g\right\rangle
_{\xi}dx\right\vert & \leq C\epsilon\int\left\langle \xi\right\rangle
^{\gamma}\left\vert \mathrm{P}_{1}g\right\vert ^{2}d\xi dx  \notag \\
& +C\epsilon\left[ \int_{H_{+}}\left[ \delta\left( \left\langle
x\right\rangle -Mt\right) \right] ^{\frac{\gamma-1}{p+1-\gamma}}\left\vert
\mathrm{P}_{0}g\right\vert ^{2}d\xi dx+\int_{H_{0}\cup H_{-}}\left\vert
\mathrm{P}_{0}g\right\vert ^{2}d\xi dx\right] .  \label{K-epsilon-1}
\end{align}
Consequently,%
\begin{align}
\int\left\langle g,L_{\epsilon}g\right\rangle _{\xi}dx & \leq\int
\left\langle g,Lg\right\rangle _{\xi}dx+C\epsilon\int\left\langle
\xi\right\rangle ^{\gamma}\left\vert \mathrm{P}_{1}g\right\vert ^{2}d\xi dx
\notag \\
& +C\epsilon\left[ \int_{H_{+}}\left[ \delta\left( \left\langle
x\right\rangle -Mt\right) \right] ^{\frac{\gamma-1}{p+1-\gamma}}\left\vert
\mathrm{P}_{0}g\right\vert ^{2}d\xi dx+\int_{H_{0}\cup H_{-}}\left\vert
\mathrm{P}_{0}g\right\vert ^{2}d\xi dx\right] ,  \label{K-epsilon-2}
\end{align}
where $L_{\epsilon}=\allowbreak e^{\epsilon\rho\left( t,x,\xi\right)
}Le^{-\epsilon\rho\left( t,x,\xi\right) }.$
\end{lemma}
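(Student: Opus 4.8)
The goal is to control the commutator $K_{\epsilon}-K = e^{\epsilon\rho}Ke^{-\epsilon\rho}-K$ in the spirit of Lemma \ref{K-esti-2}, but now with the space- and time-dependent weight $\rho(t,x,\xi)$ in place of the static $\theta(x,\xi)$. The first step is to write, pointwise in $x$ and $t$,
\begin{equation*}
e^{\epsilon\rho(t,x,\xi)}k_j(\xi,\xi_{\ast})e^{-\epsilon\rho(t,x,\xi_{\ast})}-k_j(\xi,\xi_{\ast}) = k_j(\xi,\xi_{\ast})\left(e^{\epsilon(\rho(t,x,\xi)-\rho(t,x,\xi_{\ast}))}-1\right),
\end{equation*}
for $j=1,2$, and then to estimate the factor $\rho(t,x,\xi)-\rho(t,x,\xi_{\ast})$. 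The main structural point is that $\rho$ depends on $\xi$ only through $\langle\xi\rangle$, exactly as $\theta$ did, so the one-variable mean value estimate used in \eqref{theta-est} can be reused: I would bound $|\partial_{|\xi|}\rho| \lesssim \epsilon^{-1}\cdot(\text{something tame})$ uniformly in $t,x$, obtaining an inequality of the form $|\rho(t,x,\xi)-\rho(t,x,\xi_{\ast})| \lesssim c_1\big||\xi|^2-|\xi_{\ast}|^2\big|$ on the relevant region, so that together with the Cauchy–Schwarz bound \eqref{Ineq1} the exponential kernel absorbs the growth and gives $\sup_{\xi,\xi_{\ast}}|e^{\epsilon(\rho(t,x,\xi)-\rho(t,x,\xi_{\ast}))}-1|\lesssim\epsilon$. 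This already yields the $L^2_\xi\to L^2_\xi$ (resp. $L^2_\sigma\to L^2_\sigma$) bound $\lesssim\epsilon$ from Lemma \ref{Exten. of Calfisch}/Lemma \ref{basic}, giving the first term $C\epsilon\int\langle\xi\rangle^\gamma|\mathrm P_1 g|^2$ after splitting $g=\mathrm P_0 g+\mathrm P_1 g$ and using that $K\mathrm P_0 g$ pairs harmlessly.

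**Why the extra $\mathrm P_0$ terms appear.** The subtlety — and the reason the right-hand side of \eqref{K-epsilon-1} is not simply $C\epsilon\|g\|^2$ — is that $\partial_{|\xi|}\rho$ is \emph{not} uniformly bounded by a fixed multiple of $\langle\xi\rangle^{p-2}|\xi|$; on the region $H_+$ the weight behaves like $\delta(\langle x\rangle-Mt)\langle\xi\rangle^{\gamma-1}$ near the cutoff, whose $|\xi|$-derivative carries a factor $\delta(\langle x\rangle-Mt)$ that can be large. I would therefore do the domain decomposition $H_+\cup H_0\cup H_-$ (in the $(x,\xi)$ variables, as defined) inside the $\xi_{\ast}$-integral and track where both $\xi$ and $\xi_{\ast}$ land. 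On $H_0\cup H_-$ the weight $\rho$ is comparable to $\langle\xi\rangle^p$ (the "$3\langle\xi\rangle^p$" term dominates), so the static estimate applies and produces $\lesssim\epsilon\int_{H_0\cup H_-}|\mathrm P_0 g|^2$ plus a $\mathrm P_1$-controlled piece. On $H_+$ one uses that $k(\xi,\xi_{\ast})$ forces $|\xi|$ and $|\xi_{\ast}|$ close (the exponential $\exp\{-\tfrac{1-\kappa}{8}(|\xi-\xi_{\ast}|^2+\cdots)\}$), so that $\xi_{\ast}\in H_+$ too with comparable weight value; the surplus derivative $\delta(\langle x\rangle-Mt)\langle\xi\rangle^{\gamma-2}$, when multiplied by $\big||\xi|^2-|\xi_{\ast}|^2\big|$ and absorbed into the Gaussian, leaves a residual factor $[\delta(\langle x\rangle-Mt)]^{(\gamma-1)/(p+1-\gamma)}$ (coming from evaluating $\langle\xi\rangle^{p+1-\gamma}\sim\delta(\langle x\rangle-Mt)$ on $H_+$), which is exactly the weight appearing in the statement. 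Consolidating, the $H_+$ contribution is $\lesssim\epsilon\int_{H_+}[\delta(\langle x\rangle-Mt)]^{(\gamma-1)/(p+1-\gamma)}|\mathrm P_0 g|^2$.

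**Assembling and the consequence.** Putting the three regions together gives \eqref{K-epsilon-1}. For \eqref{K-epsilon-2} I would simply write $L_\epsilon = -\nu(\xi) + K_\epsilon$ (the multiplier $\nu$ commutes with $e^{\epsilon\rho}$ since $\rho$ does not involve differentiation), so $\langle g,L_\epsilon g\rangle_\xi = \langle g,Lg\rangle_\xi + \langle g,(K_\epsilon-K)g\rangle_\xi$; integrating in $x$ and applying \eqref{K-epsilon-1} yields the claim.

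**Main obstacle.** The hard part is the bookkeeping on $H_+$: one must verify that the kernel estimate \eqref{Est-for-k} genuinely forces $\xi_{\ast}$ into a neighbourhood of $\xi$ where $\rho(t,x,\cdot)$ has the same asymptotic profile (so that $|\rho(t,x,\xi)-\rho(t,x,\xi_{\ast})|$ can be bounded using a single-regime derivative bound), and that the cross terms where $\xi\in H_+$ but $\xi_{\ast}\in H_0$ (near the interface $\delta(\langle x\rangle-Mt)\sim 2\langle\xi_{\ast}\rangle^{p+1-\gamma}$) are also controlled — there the cutoff $\chi$ is smooth so its derivative contributes at most $O(\delta(\langle x\rangle-Mt)\langle\xi\rangle^{\gamma-p-1})=O(1)$, which is the borderline case that must be checked to survive multiplication by the Gaussian. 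Everything else (the $L^2$/$L^2_\sigma$ boundedness of the residual kernels $p(\xi,\xi_{\ast})$, the $K_1$ case, the extraction of the $\epsilon$ factor) is parallel to Lemma \ref{K-esti-2} and Lemma \ref{Exten. of Calfisch} and can be cited.
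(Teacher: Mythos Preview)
Your overall architecture is right (split $g=\mathrm P_0 g+\mathrm P_1 g$, use the global Lipschitz bound $|\rho(\xi)-\rho(\xi_\ast)|\le c_1\big||\xi|^2-|\xi_\ast|^2\big|$ for the $\mathrm P_1$--$\mathrm P_1$ piece exactly as in Lemma~\ref{K-esti-2}, and note that $L_\epsilon-L=K_\epsilon-K$ to get \eqref{K-epsilon-2} from \eqref{K-epsilon-1}). The gap is in your mechanism for producing the weight $[\delta(\langle x\rangle-Mt)]^{(\gamma-1)/(p+1-\gamma)}$ on $H_+$.

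On $H_+$ the cutoff satisfies $\chi=0$, so $\rho(t,x,\xi)=5[\delta(\langle x\rangle-Mt)]^{p/(p+1-\gamma)}$ is \emph{independent of $\xi$}. Hence when both $\xi$ and $\xi_\ast$ lie in $H_+$ we have $A_\epsilon(t,x,\xi,\xi_\ast)=e^{\epsilon(\rho(\xi)-\rho(\xi_\ast))}-1=0$: there is literally no contribution from $H_+\times H_+$. So your picture ``the kernel forces $\xi_\ast\in H_+$ too and then a derivative bound on $\rho$ leaves a residual power'' is not what is happening; if $\xi_\ast\in H_+$ there is nothing to bound, and if $\xi_\ast\notin H_+$ the quantity $\rho(\xi)-\rho(\xi_\ast)$ is a difference across regimes, not controlled by a single-regime derivative of $\rho$.

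What the paper actually exploits for the cross pieces (say $\xi\in H_+$, $\delta(\langle x\rangle-Mt)\le 2\langle\xi_\ast\rangle^{p+1-\gamma}$) is the \emph{Gaussian decay of the macroscopic basis functions}: since $\mathrm P_0 g(\xi_\ast)=\sum_j b_j\chi_j(\xi_\ast)$ with $|\chi_j(\xi_\ast)|\lesssim e^{-c|\xi_\ast|^2}$, the constraint $\langle\xi_\ast\rangle^{p+1-\gamma}\gtrsim\delta(\langle x\rangle-Mt)$ converts this Gaussian into $\exp\big(-c'[\delta(\langle x\rangle-Mt)]^{2/(p+1-\gamma)}\big)$. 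That exponential factor in $x$ is what is then relaxed to the polynomial $[\delta(\langle x\rangle-Mt)]^{(\gamma-1)/(p+1-\gamma)}$. The same device (Gaussian decay of $\chi_j$ combined with the region constraint on the \emph{other} variable) handles all the cross terms $I_a,I_b,I_c,I^a,I^b,I^c$; the integrability of $k(\xi,\cdot)$ from \eqref{k-integ} or the bounded-operator kernels $p,\widetilde p$ close the $\xi_\ast$-integrals. Your ``surplus derivative $\delta(\langle x\rangle-Mt)\langle\xi\rangle^{\gamma-2}$'' does not produce that power on $H_+$ and is not the operative mechanism; without the $\chi_j$-Gaussian step the $H_+$ bound does not close.
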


\begin{proof}
We split the integral
\begin{align}
\int \left\langle g,\left( K_{\epsilon }-K\right) g\right\rangle _{\xi }dx&
=\int \left\langle \mathrm{P}_{1}g,\left( K_{\epsilon }-K\right) \mathrm{P}%
_{1}g\right\rangle _{\xi }dx+\int \left\langle \mathrm{P}_{0}g,\left(
K_{\epsilon }-K\right) \mathrm{P}_{0}g\right\rangle _{\xi }dx  \label{K-g} \\
& +\int \left\langle \mathrm{P}_{1}g,\left( K_{\epsilon }-K\right) \mathrm{P}%
_{0}g\right\rangle _{\xi }dx+\int \left\langle \mathrm{P}_{1}g,\left(
K_{-\epsilon }-K\right) \mathrm{P}_{0}g\right\rangle _{\xi }dx.  \notag
\end{align}%
Firstly, following the same procedure as in Lemma \ref{K-esti-2}, we have
that for any $\epsilon >0$ sufficiently small,
\begin{equation}
\left\vert \int \left\langle \mathrm{P}_{1}g,\left( K_{\epsilon }-K\right)
\mathrm{P}_{1}g\right\rangle _{\xi }dx\right\vert \lesssim \epsilon
\left\Vert \mathrm{P}_{1}g\right\Vert _{L_{\sigma }^{2}}^{2}.  \label{K-P1P1}
\end{equation}%
Next, we estimate $\int \left\langle \mathrm{P}_{0}g,\left( K_{\epsilon
}-K\right) \mathrm{P}_{0}g\right\rangle _{\xi }dx.$\newline
\textbf{Estimate on }$\int \left\langle \mathrm{P}_{0}g,\left( K_{\epsilon
}-K\right) \mathrm{P}_{0}g\right\rangle _{\xi }dx.$ We split the integral
\begin{align*}
& \int \left\langle \mathrm{P}_{0}g,\left( K_{\epsilon }-K\right) \mathrm{P}%
_{0}g\right\rangle _{\xi }dx \\
& =\left[
\begin{array}{c}
\int_{\delta \left( \left\langle x\right\rangle -Mt\right) >0}\int_{\delta
\left( \left\langle x\right\rangle -Mt\right) >2\left\langle \xi
\right\rangle ^{p+1-\gamma }}\int_{\delta \left( \left\langle x\right\rangle
-Mt\right) \leq 2\left\langle \xi _{\ast }\right\rangle ^{p+1-\gamma }} \\
+\int_{\delta \left( \left\langle x\right\rangle -Mt\right) >0}\int_{\delta
\left( \left\langle x\right\rangle -Mt\right) \leq 2\left\langle \xi
\right\rangle ^{p+1-\gamma }}\int_{\delta \left( \left\langle x\right\rangle
-Mt\right) \geq 2\left\langle \xi _{\ast }\right\rangle ^{p+1-\gamma }} \\
+\int_{\delta \left( \left\langle x\right\rangle -Mt\right) >0}\int_{\delta
\left( \left\langle x\right\rangle -Mt\right) \leq 2\left\langle \xi
\right\rangle ^{p+1-\gamma }}\int_{\delta \left( \left\langle x\right\rangle
-Mt\right) \leq 2\left\langle \xi _{\ast }\right\rangle ^{p+1-\gamma }} \\
+\int_{\delta \left( \left\langle x\right\rangle -Mt\right) \leq
0}\int_{\delta \left( \left\langle x\right\rangle -Mt\right) \leq
2\left\langle \xi \right\rangle ^{p+1-\gamma }}\int_{\delta \left(
\left\langle x\right\rangle -Mt\right) \leq 2\left\langle \xi _{\ast
}\right\rangle ^{p+1-\gamma }}%
\end{array}%
\right] \mathrm{P}_{0}g\left( \xi \right) k\left( \xi ,\xi _{\ast }\right)
A_{\epsilon }\left( t,x,\xi ,\xi _{\ast }\right) \mathrm{P}_{0}g\left( \xi
_{\ast }\right) d\xi _{\ast }d\xi dx \\
& =I_{a}+I_{b}+I_{c}+II,
\end{align*}%
where $A_{\epsilon }\left( t,x,\xi ,\xi _{\ast }\right) =\left[ e^{\epsilon
\left( \rho \left( t,x,\xi \right) -\rho \left( t,x,\xi _{\ast }\right)
\right) }-1\right] .$ We remark here that $A_{\epsilon }\left( t,x,\xi ,\xi
_{\ast }\right) =0$ whenever $\delta \left( \left\langle x\right\rangle
-Mt\right) >2\left\langle \xi \right\rangle ^{p+1-\gamma }$ and $\delta
\left( \left\langle x\right\rangle -Mt\right) >2\left\langle \xi _{\ast
}\right\rangle ^{p+1-\gamma }$; in other words, there is no contribution to
the integral in this region. Note that $\rho \left( t,x,\xi \right) $ also
satisfies
\begin{equation*}
\left\vert \rho \left( t,x,\xi \right) -\rho \left( t,x,\xi _{\ast }\right)
\right\vert \leq c_{1}\left\vert \left\vert \xi \right\vert ^{2}-\left\vert
\xi _{\ast }\right\vert ^{2}\right\vert ,
\end{equation*}%
whose proof is similar to Lemma \ref{K-esti-2}, hence%
\begin{equation*}
\left\vert A_{\epsilon }\left( t,x,\xi ,\xi _{\ast }\right) \right\vert
\lesssim \epsilon \left\vert \left\vert \xi \right\vert ^{2}-\left\vert \xi
_{\ast }\right\vert ^{2}\right\vert e^{c_{1}\epsilon \left\vert \left\vert
\xi \right\vert ^{2}-\left\vert \xi _{\ast }\right\vert ^{2}\right\vert }.
\end{equation*}

Now, for $I_{a},$ we have
\begin{equation*}
\left\vert A_{\epsilon }\left( t,x,\xi ,\xi _{\ast }\right) \mathrm{P}%
_{0}g\left( \xi _{\ast }\right) \right\vert \lesssim \epsilon \left\vert
\left\vert \xi \right\vert ^{2}-\left\vert \xi _{\ast }\right\vert
^{2}\right\vert e^{c_{1}\epsilon \left\vert \left\vert \xi \right\vert
^{2}-\left\vert \xi _{\ast }\right\vert ^{2}\right\vert }\left(
\sum_{j=0}^{4}\left\vert \chi _{j}\left( \xi _{\ast }\right) \right\vert
^{2}\right) ^{1/2}\left( \sum_{j=0}^{4}b_{j}^{2}\right) ^{1/2},
\end{equation*}%
and thus
\begin{align*}
& \int_{\delta \left( \left\langle x\right\rangle -Mt\right) >2\left\langle
\xi \right\rangle ^{p+1-\gamma }}\int_{\delta \left( \left\langle
x\right\rangle -Mt\right) \leq 2\left\langle \xi _{\ast }\right\rangle
^{p+1-\gamma }}\left\vert \mathrm{P}_{0}g\left( \xi \right) k\left( \xi ,\xi
_{\ast }\right) A_{\epsilon }\left( t,x,\xi ,\xi _{\ast }\right) \mathrm{P}%
_{0}g\left( \xi _{\ast }\right) \right\vert d\xi _{\ast }d\xi \\
& \lesssim \epsilon \int_{\delta \left( \left\langle x\right\rangle
-Mt\right) >2\left\langle \xi \right\rangle ^{p+1-\gamma }}\left\vert
\mathrm{P}_{0}g\left( \xi \right) \right\vert \left(
\sum_{j=0}^{4}b_{j}^{2}\right) ^{1/2}\int_{\delta \left( \left\langle
x\right\rangle -Mt\right) \leq 2\left\langle \xi _{\ast }\right\rangle
^{p+1-\gamma }}\left\vert k\left( \xi ,\xi _{\ast }\right) \right\vert
e^{-c\left\vert \xi _{\ast }\right\vert ^{2}}d\xi _{\ast } \\
& \lesssim \epsilon \int_{\delta \left( \left\langle x\right\rangle
-Mt\right) >2\left\langle \xi \right\rangle ^{p+1-\gamma }}\left(
\sum_{j=0}^{4}\left\vert \chi _{j}\left( \xi \right) \right\vert ^{2}\right)
^{1/2}\left( \sum_{j=0}^{4}b_{j}^{2}\right) \exp \left( -c^{\prime }\left[
\delta \left( \left\langle x\right\rangle -Mt\right) \right] ^{\frac{2}{%
p+1-\gamma }}\right) d\xi \\
& \lesssim \epsilon \exp \left( -c^{\prime }\left[ \delta \left(
\left\langle x\right\rangle -Mt\right) \right] ^{\frac{2}{p+1-\gamma }%
}\right) \int_{\delta \left( \left\langle x\right\rangle -Mt\right)
>2\left\langle \xi \right\rangle ^{p+1-\gamma }}\left\vert \mathrm{P}%
_{0}g\right\vert ^{2}d\xi \\
& +\epsilon \int_{\delta \left( \left\langle x\right\rangle -Mt\right) \leq
2\left\langle \xi \right\rangle ^{p+1-\gamma }}\left\vert \mathrm{P}%
_{0}g\right\vert ^{2}d\xi .
\end{align*}%
The first inequality is valid since $\left\vert \xi \right\vert <\left\vert
\xi _{\ast }\right\vert $ and $\left( \sum_{j=0}^{4}\left\vert \chi
_{j}\left( \xi \right) \right\vert ^{2}\right) ^{1/2}$ decays exponentially;
the second inequality holds due to the fact that $e^{-c\left\vert \xi _{\ast
}\right\vert ^{2}}\lesssim \exp \left( -c^{\prime }\left[ \delta \left(
\left\langle x\right\rangle -Mt\right) \right] ^{\frac{2}{p+1-\gamma }%
}\right) $ for some constant $c^{\prime }>0$ whenever $\delta \left(
\left\langle x\right\rangle -Mt\right) \leq 2\left\langle \xi _{\ast
}\right\rangle ^{p+1-\gamma }$ and that $\left\vert k\left( \xi ,\cdot
\right) \right\vert $ is integrable. Hence,
\begin{equation*}
\left\vert I_{a}\right\vert \lesssim \epsilon \left[ \int_{H_{+}}\left[
\left( \delta \left( \left\langle x\right\rangle -Mt\right) \right) \right]
^{\frac{\gamma -1}{p+1-\gamma }}\left\vert \mathrm{P}_{0}g\right\vert
^{2}d\xi dx+\int_{H_{0}\cup H_{-}}\left\vert \mathrm{P}_{0}g\right\vert
^{2}d\xi dx\right] .
\end{equation*}%
Similarly for $I_{b},$ it follows
\begin{equation*}
\left\vert I_{b}\right\vert \lesssim \epsilon \left[ \int_{H_{+}}\left[
\left( \delta \left( \left\langle x\right\rangle -Mt\right) \right) \right]
^{\frac{\gamma -1}{p+1-\gamma }}\left\vert \mathrm{P}_{0}g\right\vert
^{2}d\xi dx+\int_{H_{0}\cup H_{-}}\left\vert \mathrm{P}_{0}g\right\vert
^{2}d\xi dx\right] .
\end{equation*}%
On the other hand, by symmetry%
\begin{align*}
\left\vert I_{c}\right\vert & \leq 2c_{1}\epsilon \int_{\delta \left(
\left\langle x\right\rangle -Mt\right) >0}\int_{\delta \left( \left\langle
x\right\rangle -Mt\right) \leq 2\left\langle \xi \right\rangle ^{p+1-\gamma
}}\int_{\delta \left( \left\langle x\right\rangle -Mt\right) \leq
2\left\langle \xi _{\ast }\right\rangle ^{p+1-\gamma },\left\vert \xi
\right\vert <\left\vert \xi _{\ast }\right\vert } \\
& \left\vert \mathrm{P}_{0}g\left( \xi \right) k\left( \xi ,\xi _{\ast
}\right) \mathrm{P}_{0}g\left( \xi _{\ast }\right) \right\vert \left\vert
\left\vert \xi \right\vert ^{2}-\left\vert \xi _{\ast }\right\vert
^{2}\right\vert e^{c_{1}\epsilon \left\vert \left\vert \xi \right\vert
^{2}-\left\vert \xi _{\ast }\right\vert ^{2}\right\vert }d\xi _{\ast }d\xi
dx,
\end{align*}%
applying a similar argument for $I_{a}$ gives%
\begin{equation*}
\left\vert I_{c}\right\vert \lesssim \epsilon \left[ \int_{H_{+}}\left[
\left( \delta \left( \left\langle x\right\rangle -Mt\right) \right) \right]
^{\frac{\gamma -1}{p+1-\gamma }}\left\vert \mathrm{P}_{0}g\right\vert
^{2}d\xi dx+\int_{H_{0}\cup H_{-}}\left\vert \mathrm{P}_{0}g\right\vert
^{2}d\xi dx\right] ,
\end{equation*}%
as well.

Following the same argument as the proof of $\left( \ref{Weighted-K-1}%
\right) $ in Lemma \ref{K-esti-2}, it is easy to see that%
\begin{align*}
\left\vert II\right\vert & \lesssim \epsilon \int_{\delta \left(
\left\langle x\right\rangle -Mt\right) \leq 0}\int \left\vert \mathrm{P}%
_{0}g\right\vert ^{2}d\xi dx \\
& =\epsilon \int_{\delta \left( \left\langle x\right\rangle -Mt\right) \leq
0}\int_{\delta \left( \left\langle x\right\rangle -Mt\right) \leq
2\left\langle \xi \right\rangle ^{p+1-\gamma }}\left\vert \mathrm{P}%
_{0}g\right\vert ^{2}d\xi dx \\
& \leq \epsilon \int_{H_{0}\cup H_{-}}\left\vert \mathrm{P}_{0}g\right\vert
^{2}d\xi dx.
\end{align*}%
Gathering the estimates for $I_{a}$, $I_{b},$ $I_{c}$ and $II$ yields
\begin{equation}
\left\vert \int \left\langle \mathrm{P}_{0}g,\left( K_{\epsilon }-K\right)
\mathrm{P}_{0}g\right\rangle _{\xi }dx\right\vert \lesssim \epsilon \left[
\int_{H_{+}}\left[ \left( \delta \left( \left\langle x\right\rangle
-Mt\right) \right) \right] ^{\frac{\gamma -1}{p+1-\gamma }}\left\vert
\mathrm{P}_{0}g\right\vert ^{2}d\xi dx+\int_{H_{0}\cup H_{-}}\left\vert
\mathrm{P}_{0}g\right\vert ^{2}d\xi dx\right] .  \label{K-P0P0}
\end{equation}%
\newline
\textbf{Estimate on }$\int \left\langle \mathrm{P}_{1}g,\left( K_{\epsilon
}-K\right) \mathrm{P}_{0}g\right\rangle _{\xi }dx+\int \left\langle \mathrm{P%
}_{1}g,\left( K_{-\epsilon }-K\right) \mathrm{P}_{0}g\right\rangle _{\xi
}dx. $ We split the integral%
\begin{align*}
& \int \left\langle \mathrm{P}_{1}g,\left( K_{\epsilon }-K\right) \mathrm{P}%
_{0}g\right\rangle _{\xi }dx+\int \left\langle \mathrm{P}_{1}g,\left(
K_{-\epsilon }-K\right) \mathrm{P}_{0}g\right\rangle _{\xi }dx \\
& =\left[
\begin{array}{c}
\int_{\delta \left( \left\langle x\right\rangle -Mt\right) >0}\int_{\delta
\left( \left\langle x\right\rangle -Mt\right) >2\left\langle \xi
\right\rangle ^{p+1-\gamma }}\int_{\delta \left( \left\langle x\right\rangle
-Mt\right) \leq 2\left\langle \xi _{\ast }\right\rangle ^{p+1-\gamma }} \\
+\int_{\delta \left( \left\langle x\right\rangle -Mt\right) >0}\int_{\delta
\left( \left\langle x\right\rangle -Mt\right) \leq 2\left\langle \xi
\right\rangle ^{p+1-\gamma }}\int_{\delta \left( \left\langle x\right\rangle
-Mt\right) \geq 2\left\langle \xi _{\ast }\right\rangle ^{p+1-\gamma }} \\
+\int_{\delta \left( \left\langle x\right\rangle -Mt\right) >0}\int_{\delta
\left( \left\langle x\right\rangle -Mt\right) \leq 2\left\langle \xi
\right\rangle ^{p+1-\gamma }}\int_{\delta \left( \left\langle x\right\rangle
-Mt\right) \leq 2\left\langle \xi _{\ast }\right\rangle ^{p+1-\gamma }} \\
+\int_{\delta \left( \left\langle x\right\rangle -Mt\right) \leq
0}\int_{\delta \left( \left\langle x\right\rangle -Mt\right) \leq
2\left\langle \xi \right\rangle ^{p+1-\gamma }}\int_{\delta \left(
\left\langle x\right\rangle -Mt\right) \leq 2\left\langle \xi _{\ast
}\right\rangle ^{p+1-\gamma }}%
\end{array}%
\right] \mathrm{P}_{1}g\left( \xi \right) k\left( \xi ,\xi _{\ast }\right)
B_{\epsilon }\left( t,x,\xi ,\xi _{\ast }\right) \mathrm{P}_{0}g\left( \xi
_{\ast }\right) d\xi _{\ast }d\xi dx \\
& =I^{a}+I^{b}+I^{c}+II^{\prime },
\end{align*}%
where $B_{\epsilon }\left( t,x,\xi ,\xi _{\ast }\right) =A_{\epsilon }\left(
t,x,\xi ,\xi _{\ast }\right) +A_{\epsilon }\left( t,x,\xi _{\ast },\xi
\right) .$ It readily follows from the definition of $B_{\epsilon }\ $that
\begin{equation}
\left\vert B_{\epsilon }\left( t,x,\xi ,\xi _{\ast }\right) \right\vert
\lesssim \epsilon \left\vert \left\vert \xi \right\vert ^{2}-\left\vert \xi
_{\ast }\right\vert ^{2}\right\vert e^{c_{1}\epsilon \left\vert \left\vert
\xi \right\vert ^{2}-\left\vert \xi _{\ast }\right\vert ^{2}\right\vert }.
\label{B-epsilon}
\end{equation}

According the above discussion, we obtain%
\begin{align*}
\left\vert I^{a}\right\vert & \lesssim\epsilon\int_{\delta\left(
\left\langle x\right\rangle -Mt\right) >0}\int_{\delta\left( \left\langle
x\right\rangle -Mt\right) >2\left\langle \xi\right\rangle
^{p+1-\gamma}}\left\vert \mathrm{P}_{1}g \left( \xi\right) \right\vert
\exp\left( -c^{\prime }\left( \delta\left( \left\langle x\right\rangle
-Mt\right) \right) ^{\frac{2}{p+1-\gamma}}\right) \left(
\sum_{j=0}^{4}b_{j}^{2}\right) ^{1/2}d\xi dx \\
& \lesssim\epsilon\int\left\langle \xi\right\rangle ^{\gamma}\left\vert
\mathrm{P}_{1}g \right\vert ^{2}d\xi dx \\
& +\epsilon\int_{\delta\left( \left\langle x\right\rangle -Mt\right)
>0}\int_{\delta\left( \left\langle x\right\rangle -Mt\right) >2\left\langle
\xi\right\rangle ^{p+1-\gamma}}\left\langle \xi\right\rangle
^{-\gamma}\exp\left( -2c^{\prime}\left( \delta\left( \left\langle
x\right\rangle -Mt\right) \right) ^{\frac{2}{p+1-\gamma}}\right) \left(
\sum_{j=0}^{4}b_{j}^{2}\right) d\xi dx.
\end{align*}
For $0\leq\gamma<1,$
\begin{align*}
& \int_{\delta\left( \left\langle x\right\rangle -Mt\right) >0}\int
_{\delta\left( \left\langle x\right\rangle -Mt\right) >2\left\langle
\xi\right\rangle ^{p+1-\gamma}}\left\langle \xi\right\rangle
^{-\gamma}\exp\left( -2c^{\prime}\left( \delta\left( \left\langle
x\right\rangle -Mt\right) \right) ^{\frac{2}{p+1-\gamma}}\right) \left(
\sum_{j=0}^{4}b_{j}^{2}\right) d\xi dx \\
& \lesssim\int_{\delta\left( \left\langle x\right\rangle -Mt\right) >0}\left[
\left( \delta\left( \left\langle x\right\rangle -Mt\right) \right) \right] ^{%
\frac{3}{p+1-\gamma}}\exp\left( -2c^{\prime}\left( \delta\left( \left\langle
x\right\rangle -Mt\right) \right) ^{\frac {2}{p+1-\gamma}}\right) \left(
\sum_{j=0}^{4}b_{j}^{2}\right) dx \\
& \lesssim\int_{H_{+}}\left[ \left( \delta\left( \left\langle x\right\rangle
-Mt\right) \right) \right] ^{\frac{\gamma-1}{p+1-\gamma}}\left\vert \mathrm{P%
}_{0}g \right\vert ^{2}d\xi dx+\int_{H_{0}\cup H_{-}}\left\vert \mathrm{P}%
_{0}g \right\vert ^{2}d\xi dx,
\end{align*}
and for $-2<\gamma<0,$%
\begin{align*}
& \int_{\delta\left( \left\langle x\right\rangle -Mt\right) >0}\int
_{\delta\left( \left\langle x\right\rangle -Mt\right) >2\left\langle
\xi\right\rangle ^{p+1-\gamma}}\left\langle \xi\right\rangle
^{-\gamma}\exp\left( -2c^{\prime}\left( \delta\left( \left\langle
x\right\rangle -Mt\right) \right) ^{\frac{2}{p+1-\gamma}}\right) \left(
\sum_{j=0}^{4}b_{j}^{2}\right) d\xi dx \\
& \lesssim\int_{\delta\left( \left\langle x\right\rangle -Mt\right) >0}\left[
\left( \delta\left( \left\langle x\right\rangle -Mt\right) \right) \right] ^{%
\frac{3-\gamma}{p+1-\gamma}}\exp\left( -2c^{\prime }\left( \delta\left(
\left\langle x\right\rangle -Mt\right) \right) ^{\frac{2}{p+1-\gamma}%
}\right) \left( \sum_{j=0}^{4}b_{j}^{2}\right) dx \\
& \lesssim\int_{H_{+}}\left[ \left( \delta\left( \left\langle x\right\rangle
-Mt\right) \right) \right] ^{\frac{\gamma-1}{p+1-\gamma}}\left\vert \mathrm{P%
}_{0}g \right\vert ^{2}d\xi dx+\int_{H_{0}\cup H_{-}}\left\vert \mathrm{P}%
_{0}g \right\vert ^{2}d\xi dx.
\end{align*}
Hence, we conclude
\begin{equation*}
\left\vert I^{a}\right\vert \lesssim\epsilon\left( \int\left\langle
\xi\right\rangle ^{\gamma}\left\vert \mathrm{P}_{1}g \right\vert ^{2}d\xi
dx+\int_{H_{+}}\left[ \left( \delta\left( \left\langle x\right\rangle
-Mt\right) \right) \right] ^{\frac{\gamma-1}{p+1-\gamma}}\left\vert \mathrm{P%
}_{0}g \right\vert ^{2}d\xi dx+\int_{H_{0}\cup H_{-}}\left\vert \mathrm{P}%
_{0}g \right\vert ^{2}d\xi dx\right) .
\end{equation*}

For $II^{\prime },$ similar to Lemma \ref{K-esti-2},
\begin{align*}
\left\vert II^{\prime }\right\vert & \lesssim \epsilon \int_{\delta \left(
\left\langle x\right\rangle -Mt\right) \leq 0}\left\vert \left\langle \xi
\right\rangle ^{\gamma /2}\mathrm{P}_{1}g\right\vert _{L_{\xi
}^{2}}\left\vert \left\langle \xi \right\rangle ^{-\gamma /2}\mathrm{P}%
_{0}g\right\vert _{L_{\xi }^{2}}dx \\
& \lesssim \epsilon \int \left\langle \xi \right\rangle ^{\gamma }\left\vert
\mathrm{P}_{1}g\right\vert ^{2}d\xi dx+\epsilon \int_{\delta \left(
\left\langle x\right\rangle -Mt\right) \leq 0}\int_{\delta \left(
\left\langle x\right\rangle -Mt\right) \leq 2\left\langle \xi \right\rangle
^{p+1-\gamma }}\left\langle \xi \right\rangle ^{-\gamma }\left\vert \mathrm{P%
}_{0}g\right\vert ^{2}d\xi dx.
\end{align*}%
For $0\leq \gamma <1,$%
\begin{equation*}
\int_{\delta \left( \left\langle x\right\rangle -Mt\right) \leq
2\left\langle \xi \right\rangle ^{p+1-\gamma }}\left\langle \xi
\right\rangle ^{-\gamma }\left\vert \mathrm{P}_{0}g\right\vert ^{2}d\xi \leq
\int_{\delta \left( \left\langle x\right\rangle -Mt\right) \leq
2\left\langle \xi \right\rangle ^{p+1-\gamma }}\left\vert \mathrm{P}%
_{0}g\right\vert ^{2}d\xi ,
\end{equation*}%
and for $-2<\gamma <0,$%
\begin{eqnarray*}
\int_{\delta \left( \left\langle x\right\rangle -Mt\right) \leq
2\left\langle \xi \right\rangle ^{p+1-\gamma }}\left\langle \xi
\right\rangle ^{-\gamma }\left\vert \mathrm{P}_{0}g\right\vert ^{2}d\xi
&\leq &\int_{\delta \left( \left\langle x\right\rangle -Mt\right) \leq
2\left\langle \xi \right\rangle ^{p+1-\gamma }}\left\langle \xi
\right\rangle ^{-\gamma }\left( \sum_{j=0}^{4}\left\vert \chi _{j}\left( \xi
\right) \right\vert ^{2}\right) d\xi \left( \sum_{j=0}^{4}b_{j}^{2}\right) \\
&\lesssim &\left( \sum_{j=0}^{4}b_{j}^{2}\right) .
\end{eqnarray*}%
Hence, we deduce
\begin{align*}
\left\vert II^{\prime }\right\vert & \lesssim \epsilon \int \left\langle \xi
\right\rangle ^{\gamma }\left\vert \mathrm{P}_{1}g\right\vert ^{2}d\xi
dx+\epsilon \int_{\delta \left( \left\langle x\right\rangle -Mt\right) \leq
0}\int_{\delta \left( \left\langle x\right\rangle -Mt\right) \leq
2\left\langle \xi \right\rangle ^{p+1-\gamma }}\left\vert \mathrm{P}%
_{0}g\right\vert ^{2}d\xi dx \\
& \lesssim \epsilon \int \left\langle \xi \right\rangle ^{\gamma }\left\vert
\mathrm{P}_{1}g\right\vert ^{2}d\xi dx+\epsilon \int_{H_{0}\cup
H_{-}}\left\vert \mathrm{P}_{0}g\right\vert ^{2}d\xi dx.
\end{align*}

For $I^{b},$ we observe that $\left\vert \xi _{\ast }\right\vert \leq
\left\vert \xi \right\vert $ in this region. Further, note that%
\begin{align*}
& \quad k_{1}\left( \xi ,\xi _{\ast }\right) \left\vert B_{\epsilon }\left(
t,x,\xi ,\xi _{\ast }\right) \mathrm{P}_{0}g\left( \xi _{\ast }\right)
\right\vert \\
& \lesssim \left\vert \xi -\xi _{\ast }\right\vert ^{\gamma }\exp \left( -%
\frac{1}{4}\left\vert \xi \right\vert ^{2}-\frac{1}{4}\left\vert \xi _{\ast
}\right\vert ^{2}\right) \left\vert B_{\epsilon }\left( t,x,\xi ,\xi _{\ast
}\right) \mathrm{P}_{0}g\left( \xi _{\ast }\right) \right\vert \\
& =\widetilde{p}\left( \xi ,\xi _{\ast }\right) \times \left[ \exp \left( -%
\frac{1}{8}\left\vert \xi \right\vert ^{2}-\frac{1}{8}\left\vert \xi _{\ast
}\right\vert ^{2}\right) \left\vert B_{\epsilon }\left( t,x,\xi ,\xi _{\ast
}\right) \mathrm{P}_{0}g\left( \xi _{\ast }\right) \right\vert \right]
\end{align*}%
satisfies%
\begin{equation*}
\left\vert \exp \left( -\frac{1}{8}\left\vert \xi \right\vert ^{2}-\frac{1}{8%
}\left\vert \xi _{\ast }\right\vert ^{2}\right) B_{\epsilon }\left( t,x,\xi
,\xi _{\ast }\right) \mathrm{P}_{0}g\left( \xi _{\ast }\right) \right\vert
\lesssim \left[ \epsilon \exp \left( -\frac{1}{16}\left\vert \xi \right\vert
^{2}\right) \left( \sum_{j=0}^{4}b_{j}^{2}\right) ^{1/2}\right] \exp \left(
-c_{2}\left\vert \xi _{\ast }\right\vert ^{2}\right) ,
\end{equation*}%
and%
\begin{align*}
& k_{2}\left( \xi ,\xi _{\ast }\right) B_{\epsilon }\left( t,x,\xi ,\xi
_{\ast }\right) \mathrm{P}_{0}g\left( \xi _{\ast }\right) \\
& =p\left( \xi ,\xi _{\ast }\right) \left\{ \exp \left( -\frac{1}{16}\left[
\frac{\left( \left\vert \xi \right\vert ^{2}-\left\vert \xi _{\ast
}\right\vert ^{2}\right) ^{2}}{\left\vert \xi -\xi _{\ast }\right\vert ^{2}}%
+\left\vert \xi -\xi _{\ast }\right\vert ^{2}\right] \right) \times
B_{\epsilon }\left( t,x,\xi ,\xi _{\ast }\right) \mathrm{P}_{0}g\left( \xi
_{\ast }\right) \right\}
\end{align*}%
satisfies
\begin{align*}
& \left\vert \exp \left( -\frac{1}{16}\left[ \frac{\left( \left\vert \xi
\right\vert ^{2}-\left\vert \xi _{\ast }\right\vert ^{2}\right) ^{2}}{%
\left\vert \xi -\xi _{\ast }\right\vert ^{2}}+\left\vert \xi -\xi _{\ast
}\right\vert ^{2}\right] \right) B_{\epsilon }\left( t,x,\xi ,\xi _{\ast
}\right) \mathrm{P}_{0}g\left( \xi _{\ast }\right) \right\vert \\
& \lesssim \epsilon \exp \left( -\frac{1}{16}\left\vert \left\vert \xi
\right\vert ^{2}-\left\vert \xi _{\ast }\right\vert ^{2}\right\vert \right)
\left( \sum_{j=0}^{4}\left\vert \chi _{j}\left( \xi _{\ast }\right)
\right\vert ^{2}\right) ^{1/2}\left( \sum_{j=0}^{4}b_{j}^{2}\right) ^{1/2} \\
& \lesssim \left[ \epsilon \exp \left( -\frac{1}{16}\left\vert \xi
\right\vert ^{2}\right) \left( \sum_{j=0}^{4}b_{j}^{2}\right) ^{1/2}\right]
\exp \left( -c_{2}\left\vert \xi _{\ast }\right\vert ^{2}\right) ,
\end{align*}%
for some $c_{2}>0,$ where $\widetilde{p}\left( \xi ,\xi _{\ast }\right) $
and $p\left( \xi ,\xi _{\ast }\right) $ are kernels of bounded operators on $%
L_{\xi }^{2}.$ Since $e^{-c_{2}\left\vert \xi _{\ast }\right\vert ^{2}}\in
L_{\xi _{\ast }}^{2}$ and
\begin{equation*}
\left\langle \xi \right\rangle ^{-\frac{\gamma }{2}}e^{-\frac{1}{16}%
\left\vert \xi \right\vert ^{2}}\lesssim \exp \left( -c_{3}\left[ \delta
\left( \left\langle x\right\rangle -Mt\right) \right] ^{\frac{2}{p+1-\gamma }%
}\right)
\end{equation*}%
for some constant $c_{3}>0$ as $\delta \left( \left\langle x\right\rangle
-Mt\right) \leq 2\left\langle \xi \right\rangle ^{p+1-\gamma },$ we obtain
\begin{align*}
\left\vert I^{b}\right\vert & \lesssim \epsilon \int \left\langle \xi
\right\rangle ^{\gamma }\left\vert \mathrm{P}_{1}g\right\vert ^{2}d\xi
dx+\epsilon \int_{\delta \left( \left\langle x\right\rangle -Mt\right)
>0}\exp \left( -2c_{3}\left[ \delta \left( \left\langle x\right\rangle
-Mt\right) \right] ^{\frac{2}{p+1-\gamma }}\right) \left(
\sum_{j=0}^{4}b_{j}^{2}\right) dx \\
& \lesssim \epsilon \int \left\langle \xi \right\rangle ^{\gamma }\left\vert
\mathrm{P}_{1}g\right\vert ^{2}d\xi dx+\epsilon \left[ \int_{H_{+}}\left[
\delta \left( \left\langle x\right\rangle -Mt\right) \right] ^{\frac{\gamma
-1}{p+1-\gamma }}\left\vert \mathrm{P}_{0}g\right\vert ^{2}d\xi
dx+\int_{H_{0}\cup H_{-}}\left\vert \mathrm{P}_{0}g\right\vert ^{2}d\xi dx%
\right] .
\end{align*}

Finally, we split the integral
\begin{align*}
I^{c}& =\int_{\delta \left( \left\langle x\right\rangle -Mt\right)
>0}\int_{\delta \left( \left\langle x\right\rangle -Mt\right) \leq
2\left\langle \xi \right\rangle ^{p+1-\gamma }}\int_{\delta \left(
\left\langle x\right\rangle -Mt\right) \leq 2\left\langle \xi _{\ast
}\right\rangle ^{p+1-\gamma },\left\vert \xi _{\ast }\right\vert <\left\vert
\xi \right\vert } \\
& +\int_{\delta \left( \left\langle x\right\rangle -Mt\right)
>0}\int_{\delta \left( \left\langle x\right\rangle -Mt\right) \leq
2\left\langle \xi \right\rangle ^{p+1-\gamma }}\int_{\delta \left(
\left\langle x\right\rangle -Mt\right) \leq 2\left\langle \xi _{\ast
}\right\rangle ^{p+1-\gamma },\left\vert \xi _{\ast }\right\vert >\left\vert
\xi \right\vert } \\
& \equiv I_{1}^{c}+I_{2}^{c}.
\end{align*}%
Similar to $I^{b},$
\begin{equation*}
\left\vert I_{1}^{c}\right\vert \lesssim \epsilon \int \left\langle \xi
\right\rangle ^{\gamma }\left\vert \mathrm{P}_{1}g\right\vert ^{2}d\xi
dx+\epsilon \left[ \int_{H_{+}}\left[ \delta \left( \left\langle
x\right\rangle -Mt\right) \right] ^{\frac{\gamma -1}{p+1-\gamma }}\left\vert
\mathrm{P}_{0}g\right\vert ^{2}d\xi dx+\int_{H_{0}\cup H_{-}}\left\vert
\mathrm{P}_{0}g\right\vert ^{2}d\xi dx\right] .
\end{equation*}%
In view of $\left( \ref{B-epsilon}\right) ,$
\begin{align*}
& \int_{\delta \left( \left\langle x\right\rangle -Mt\right) \leq
2\left\langle \xi _{\ast }\right\rangle ^{p+1-\gamma },\text{ }\left\vert
\xi \right\vert <\left\vert \xi _{\ast }\right\vert }\left\vert k\left( \xi
,\xi _{\ast }\right) B_{\epsilon }\left( t,x,\xi ,\xi _{\ast }\right)
\mathrm{P}_{0}g\left( \xi _{\ast }\right) \right\vert d\xi _{\ast } \\
& \lesssim \epsilon \left( \sum_{j=0}^{4}b_{j}^{2}\right) ^{1/2}\int_{\delta
\left( \left\langle x\right\rangle -Mt\right) \leq 2\left\langle \xi _{\ast
}\right\rangle ^{p+1-\gamma }}\left\vert k\left( \xi ,\xi _{\ast }\right)
\right\vert \left\vert \xi _{\ast }\right\vert ^{2}e^{c_{1}\epsilon
\left\vert \xi _{\ast }\right\vert ^{2}}\left( \sum_{j=0}^{4}\left\vert \chi
_{j}\left( \xi _{\ast }\right) \right\vert ^{2}\right) ^{1/2}d\xi _{\ast } \\
& \lesssim \epsilon \left( \sum_{j=0}^{4}b_{j}^{2}\right) ^{1/2}\int_{\delta
\left( \left\langle x\right\rangle -Mt\right) \leq 2\left\langle \xi _{\ast
}\right\rangle ^{p+1-\gamma }}\left\vert k\left( \xi ,\xi _{\ast }\right)
\right\vert e^{-c^{\prime }\left\vert \xi _{\ast }\right\vert ^{2}}d\xi
_{\ast } \\
& \lesssim \epsilon \left( \sum_{j=0}^{4}b_{j}^{2}\right) ^{1/2}\exp \left( -%
\frac{c^{\prime \prime }}{2}\left[ \delta \left( \left\langle x\right\rangle
-Mt\right) \right] ^{\frac{2}{p+1-\gamma }}\right) \int_{\delta \left(
\left\langle x\right\rangle -Mt\right) \leq 2\left\langle \xi _{\ast
}\right\rangle ^{p+1-\gamma }}\left\vert k\left( \xi ,\xi _{\ast }\right)
\right\vert e^{-\frac{c^{\prime }}{2}|\xi _{\ast }|^{2}}d\xi _{\ast } \\
& \lesssim \epsilon \left( \sum_{j=0}^{4}b_{j}^{2}\right) ^{1/2}\exp \left( -%
\frac{c^{\prime \prime }}{2}\left[ \delta \left( \left\langle x\right\rangle
-Mt\right) \right] ^{\frac{2}{p+1-\gamma }}\right) e^{-\frac{c^{\prime }}{2}%
|\xi |^{2}}\quad \quad \quad \left( \text{by }\left( \ref{K-1}\right) \right)
\end{align*}%
for some constants $0<c^{\prime }<1/2$ and $c^{\prime \prime }>0,$ and then
by the Cauchy inequality,
\begin{align*}
\left\vert I_{2}^{c}\right\vert & \lesssim \epsilon \int \left\langle \xi
\right\rangle ^{\gamma }\left\vert \mathrm{P}_{1}g\right\vert ^{2}d\xi dx \\
& +\epsilon \int \left\langle \xi \right\rangle ^{-\gamma }e^{-\frac{%
c^{\prime }}{2}|\xi |^{2}}d\xi \cdot \int_{\delta \left( \left\langle
x\right\rangle -Mt\right) >0}\exp \left( -c^{\prime \prime }\left[ \delta
\left( \left\langle x\right\rangle -Mt\right) \right] ^{\frac{2}{p+1-\gamma }%
}\right) \sum_{j=0}^{4}b_{j}^{2}dx \\
& \lesssim \epsilon \int \left\langle \xi \right\rangle ^{\gamma }\left\vert
\mathrm{P}_{1}g\right\vert ^{2}d\xi dx+\epsilon \left[ \int_{H_{+}}\left[
\delta \left( \left\langle x\right\rangle -Mt\right) \right] ^{\frac{\gamma
-1}{p+1-\gamma }}\left\vert \mathrm{P}_{0}g\right\vert ^{2}d\xi
dx+\int_{H_{0}\cup H_{-}}\left\vert \mathrm{P}_{0}g\right\vert ^{2}d\xi dx%
\right] .
\end{align*}%
Consequently, we obtain%
\begin{align}
& \left\vert \int \left\langle \mathrm{P}_{1}g,\left( K_{\epsilon }-K\right)
\mathrm{P}_{0}g\right\rangle _{\xi }dx+\int \left\langle \mathrm{P}%
_{1}g,\left( K_{-\epsilon }-K\right) \mathrm{P}_{0}g\right\rangle _{\xi
}dx\right\vert  \notag \\
& \lesssim \epsilon \left[ \int \left\langle \xi \right\rangle ^{\gamma
}\left\vert \mathrm{P}_{1}g\right\vert ^{2}d\xi dx+\int_{H_{+}}\left[ \delta
\left( \left\langle x\right\rangle -Mt\right) \right] ^{\frac{\gamma -1}{%
p+1-\gamma }}\left\vert \mathrm{P}_{0}g\right\vert ^{2}d\xi
dx+\int_{H_{0}\cup H_{-}}\left\vert \mathrm{P}_{0}g\right\vert ^{2}d\xi dx%
\right] .  \label{K-P1P0}
\end{align}%
Combining $\left( \ref{K-P1P1}\right) ,$ $\left( \ref{K-P0P0}\right) $ and $%
\left( \ref{K-P1P0}\right) ,$ we get our result.
\end{proof}

Now, we are ready to get the weighted energy estimate of $\mathcal{R}^{(6)}$.

\begin{proposition}[Weighted energy for $\mathcal{R}^{(6)}$]
\label{weighted-remainder} Consider the weight
\begin{equation}
w\left( t,x,\xi \right) =\exp \left( \epsilon \rho \left( t,x,\xi \right)
/2\right) ,  \label{weight-function}
\end{equation}%
with%
\begin{align*}
\rho \left( t,x,\xi \right) & =5\left( \delta \left( \left\langle
x\right\rangle -Mt\right) \right) ^{\frac{p}{p+1-\gamma }}\left( 1-\chi
\left( \frac{\delta \left( \left\langle x\right\rangle -Mt\right) }{%
\left\langle \xi \right\rangle ^{p+1-\gamma }}\right) \right) \\
& +\left[ \left( 1-\chi \left( \frac{\delta \left( \left\langle
x\right\rangle -Mt\right) }{\left\langle \xi \right\rangle ^{p+1-\gamma }}%
\right) \right) \left[ \delta \left( \left\langle x\right\rangle -Mt\right) %
\right] \left\langle \xi \right\rangle ^{\gamma -1}+3\left\langle \xi
\right\rangle ^{p}\right] \chi \left( \frac{\delta \left( \left\langle
x\right\rangle -Mt\right) }{\left\langle \xi \right\rangle ^{p+1-\gamma }}%
\right) ,
\end{align*}%
where $\epsilon ,\ \delta >0\ $are sufficiently small, $M>0$ sufficiently
large, and $0<p\leq 2.$ Then we have%
\begin{equation*}
\left\Vert w\mathcal{R}^{(6)}\right\Vert _{H_{x}^{2}L_{\xi }^{2}}\lesssim
\{t^{5}\wedge t\}\left\Vert f_{0}\right\Vert _{L^{2}(\mu )},\ \ \ \ \ 0\leq
\gamma <1,
\end{equation*}%
and%
\begin{equation*}
\left\Vert w\mathcal{R}^{(6)}\right\Vert _{H_{x}^{2}L_{\xi }^{2}}\lesssim
t^{5}\left( 1+t\right) ^{3}\Vert f_{0}\Vert _{L^{2}(\mu )},\ \ \ -2<\gamma
<0.
\end{equation*}
\end{proposition}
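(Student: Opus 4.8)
The plan is to run a weighted $L^2$ energy estimate directly on the transport equation satisfied by $\mathcal{R}^{(6)}$ (and by its $x$-derivatives), treating $Kh^{(6)}$ as a known source, and then to integrate in time starting from the zero datum $\mathcal{R}^{(6)}(0)=0$. Concretely, multiply $\partial_t\mathcal{R}^{(6)}+\xi\cdot\nabla_x\mathcal{R}^{(6)}=L\mathcal{R}^{(6)}+Kh^{(6)}$ by $w^2\mathcal{R}^{(6)}$ and integrate over $\mathbb{R}^6$; writing $g=w\mathcal{R}^{(6)}$, the time derivative gives $\tfrac12\frac{d}{dt}\|g\|_{L^2}^2$ together with the ``weight/transport'' term $\tfrac{\epsilon}{2}\int w^2(\partial_t\rho+\xi\cdot\nabla_x\rho)|\mathcal{R}^{(6)}|^2$ produced jointly by differentiating $w^2$ in $t$ and by integrating $\xi\cdot\nabla_x$ by parts. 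The collision term becomes $\int\langle g,L_{\epsilon/2}g\rangle_\xi\,dx$, which by Lemma~\ref{K-epsilon} (with parameter $\epsilon/2$) and the coercivity of $L$ from Lemma~\ref{basic} is bounded by $-c\|\mathrm{P}_1g\|_{L^2_\sigma}^2+C\epsilon\big[\int_{H_+}[\delta(\langle x\rangle-Mt)]^{\frac{\gamma-1}{p+1-\gamma}}|\mathrm{P}_0g|^2\,d\xi dx+\int_{H_0\cup H_-}|\mathrm{P}_0g|^2\,d\xi dx\big]$.

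The core of the argument is the weight/transport term, and this is where the specific form of $\rho$ and the size of $M$ are used. On $H_-$ one has $\rho=3\langle\xi\rangle^{p}$, which is independent of $(t,x)$, so the term vanishes there; on $H_+$ one has $\rho=5(\delta(\langle x\rangle-Mt))^{p/(p+1-\gamma)}$, whence $\partial_t\rho+\xi\cdot\nabla_x\rho\lesssim\delta(\delta(\langle x\rangle-Mt))^{\frac{\gamma-1}{p+1-\gamma}}(|\xi|-M)$; this is genuinely negative for $|\xi|\le M/2$, and for $|\xi|>M$ the defining inequality of $H_+$, namely $\langle\xi\rangle^{p+1-\gamma}<\tfrac12\delta(\langle x\rangle-Mt)$, converts $(\delta(\langle x\rangle-Mt))^{\frac{\gamma-1}{p+1-\gamma}}$ into $\lesssim\langle\xi\rangle^{\gamma-1}$, so the contribution is $\lesssim\delta\langle\xi\rangle^{\gamma}$ and is absorbed by the $\nu(\xi)$-damping once $\epsilon\delta$ is small. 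On $H_0$, all derivatives of $\rho$ are $\lesssim\delta\langle\xi\rangle^{\gamma-1}(\langle\xi\rangle+M)$ and are absorbed in the same way. The order of choices is thus: $M$ large (beyond the macroscopic speeds), then $\delta$ small depending on $M$, then $\epsilon$ small. What remains are the $\mathrm{P}_0g$ integrals: the localizing factor $[\delta(\langle x\rangle-Mt)]^{\frac{\gamma-1}{p+1-\gamma}}$ is $\le1$ on $H_+$ and $\int|\mathrm{P}_0g|^2\,d\xi\le\|g\|_{L^2_\xi}^2$, so these are crudely $\lesssim\epsilon\|w\mathcal{R}^{(6)}\|_{L^2}^2$; moreover the negative term produced on $H_+\cap\{|\xi|\le M/2\}$ absorbs the low-velocity part of them, and the high-velocity part is exponentially small in $M$ because $\mathrm{P}_0g$ has a Gaussian tail.

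Collecting, one arrives at $\frac{d}{dt}\|w\mathcal{R}^{(6)}\|_{L^2}^2\le 2\langle w^2\mathcal{R}^{(6)},Kh^{(6)}\rangle+(\text{controlled lower order})$; the source is handled by $|\langle w^2\mathcal{R}^{(6)},Kh^{(6)}\rangle|\lesssim\|w\mathcal{R}^{(6)}\|\,\|wKh^{(6)}\|\lesssim\|w\mathcal{R}^{(6)}\|\,\|wh^{(6)}\|$, using the weighted boundedness of $K$ from Lemmas~\ref{K-esti-2}--\ref{Operators-KSD} (in the soft case one splits off the $\mathrm{P}_1$ part via Young's inequality and absorbs it by the coercivity, the $\mathrm{P}_0$ part being again handled as above). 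The key comparison is $w^2\le\mu$: since $\langle x\rangle-Mt\le\langle x\rangle$ and $\rho$ and $\theta$ are assembled from the same building blocks with identical constants, one has $\rho\le\theta$ and hence $\|wh^{(6)}\|_{L^2}\le\|h^{(6)}\|_{L^2(\mu)}$, which is controlled by iterating Lemma~\ref{Operators-KSD} and, for the $x$-derivatives, by the computations in the proof of Lemma~\ref{Regularity} (using $\mathcal{D}_t=t\nabla_x+\nabla_\xi$ to trade $\nabla_x$ for $\nabla_\xi$). Dividing by $\|w\mathcal{R}^{(6)}\|$ and integrating the resulting Gronwall inequality from $\mathcal{R}^{(6)}(0)=0$ then yields the $L^2$ bound, and the same scheme applied to the equations for $\nabla_x\mathcal{R}^{(6)}$ and $\nabla_x^2\mathcal{R}^{(6)}$ (which carry the same $L$, $K$, $\nu$ and source $K\nabla_x^{j}h^{(6)}$) gives the $H^2_x$ bound; the time factors $s^{j}(1+s)^2e^{-c_\gamma s}$ with $c_\gamma>0$ for $0\le\gamma<1$ and $c_\gamma=0$ for $-2<\gamma<0$ (plus the extra powers of $s$ from the $\mathcal{D}_s$-commutator step) integrate to the claimed $\{t^5\wedge t\}$ and $t^5(1+t)^3$.

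The step I expect to be the main obstacle is closing the differential inequality in the presence of the fluid part: $L$ contributes no damping on $\mathrm{P}_0g$, so the estimate only works because the weight is tuned so that $\xi\cdot\nabla_x w$ is dominated by $\partial_t w$ on the fluid-relevant velocities ($M$ large) while the microscopic damping $\nu(\xi)$ absorbs the weight gradients at high velocities ($\delta,\epsilon$ small), and one must check that the region-by-region bounds of Lemma~\ref{K-epsilon} mesh precisely with the good terms generated by the weight so that no uncontrolled $\mathrm{P}_0g$ integral survives and the time growth stays within the asserted polynomial loss. A secondary technical point is verifying $\rho\le\theta$ across the transition regions $H_0$, i.e.\ that the weight profile is monotone in the spatial slot, so that the reduction $\|wh^{(6)}\|_{L^2}\le\|h^{(6)}\|_{L^2(\mu)}$ is legitimate.
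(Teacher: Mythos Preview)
Your overall strategy---multiply by $w^2\mathcal{R}^{(6)}$, invoke Lemma~\ref{K-epsilon} for $L_{\epsilon/2}$, analyze $\partial_t\rho+\xi\cdot\nabla_x\rho$ region by region, compare $w$ to $\mu^{1/2}$ via $\rho\le\theta$ (equivalently $\partial_t\rho\le 0$), and feed in Lemma~\ref{Regularity}---is exactly the paper's. But there is a genuine gap in how you close on the fluid part.

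You hope that ``no uncontrolled $\mathrm{P}_0g$ integral survives,'' i.e.\ that the negative $\partial_t\rho$ contribution on $H_+$ together with the Gaussian tail of $\mathrm{P}_0g$ absorbs \emph{all} the $\mathrm{P}_0$ terms coming from Lemma~\ref{K-epsilon}. This fails on $H_-$. On $H_-$ one has $\rho=3\langle\xi\rangle^p$, hence $\partial_t\rho=\xi\cdot\nabla_x\rho=0$ there: the weight produces no good sign at all, while Lemma~\ref{K-epsilon} still contributes $+C\epsilon\int_{H_-}|\mathrm{P}_0u|^2\,d\xi\,dx$. The negative term you generate lives only on $H_+$ and cannot cancel an integral supported on the disjoint set $H_-$; and for $\langle x\rangle\le Mt$ the $\xi$--section of $H_-$ is all of $\mathbb{R}^3$, so the Gaussian-tail argument does not help either. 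If you simply bound this leftover by $C\epsilon\|u\|_{L^2}^2$ and Gronwall, you get $e^{C\epsilon t}$ growth, not the polynomial rates asserted.

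The missing ingredient is to feed in the \emph{unweighted} a priori estimate \eqref{R^(6)-1} on $\Vert\mathcal{R}^{(6)}\Vert_{H_x^2L_\xi^2}$, which is already available before you start the weighted estimate. On $H_0\cup H_-$ the weight satisfies $\rho\lesssim\langle\xi\rangle^p$, so there $w\chi_j\in L^2_\xi$ uniformly in $x$ and the macroscopic moments $\langle u,\chi_j\rangle_\xi=\langle\mathcal{R}^{(6)},w\chi_j\rangle_\xi$ are controlled by the unweighted $|\mathcal{R}^{(6)}|_{L^2_\xi}$; combined with $|\mathrm{P}_0u|_{L^2_\xi}\le|u|_{L^2_\xi}$ and the Gaussian restriction for $\langle x\rangle>Mt$, this yields a bound of the form $\int_{H_0\cup H_-}|\mathrm{P}_0u|^2\lesssim \Vert u\Vert_{L^2}\Vert\mathcal{R}^{(6)}\Vert_{L^2}$. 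The resulting differential inequality is then
\[
\frac{d}{dt}\Vert u\Vert_{H_x^2L_\xi^2}\ \lesssim\ \Vert h^{(6)}\Vert_{H_x^2L_\xi^2(\mu)}+\Vert\mathcal{R}^{(6)}\Vert_{H_x^2L_\xi^2},
\]
with both right-hand terms already known (Lemma~\ref{Regularity} and \eqref{R^(6)-1}); integrating in $t$ gives the claimed $\{t^5\wedge t\}$ and $t^5(1+t)^3$ without any exponential loss. Your comparison $\rho\le\theta$ is correct and is precisely what the paper uses (phrased as ``$\partial_t\rho\le0$, so $w$ is decreasing in $t$'') to reduce $\Vert wKh^{(6)}\Vert$ to $\Vert h^{(6)}\Vert_{L^2(\mu)}$.
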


\begin{proof}
Let $u=w\mathcal{R}^{(6)}=e^{\frac{\epsilon \rho }{2}}\mathcal{R}^{(6)}$,
and then $u$ solves the equation
\begin{equation*}
\partial _{t}u+\xi \cdot \nabla _{x}u-\frac{\epsilon }{2}(\partial _{t}\rho
+\xi \cdot \nabla _{x}\rho )u-e^{\frac{\epsilon \rho }{2}}L\left( e^{-\frac{%
\epsilon \rho }{2}}u\right) =e^{\frac{\epsilon \rho }{2}}Kh^{(6)}\,.
\end{equation*}%
The energy estimate gives
\begin{align*}
& \quad \frac{1}{2}\frac{d}{dt}\int_{{\mathbb{R}}^{3}}\left\langle
u,u\right\rangle _{\xi }dx-\int_{{\mathbb{R}}^{3}}\left\langle u,e^{\frac{%
\epsilon \rho }{2}}Kh^{(6)}\right\rangle _{\xi }dx \\
& =\int_{{\mathbb{R}}^{3}}\frac{\epsilon }{2}\left\langle u,(\partial
_{t}\rho +\xi \cdot \nabla _{x}\rho )u\right\rangle _{\xi }dx+\int_{{\mathbb{%
R}}^{3}}\left\langle u,e^{\frac{\epsilon \rho }{2}}L\left( e^{-\frac{%
\epsilon \rho }{2}}u\right) \right\rangle _{\xi }dx\,.
\end{align*}%
In view of Lemma \ref{K-epsilon},
\begin{align*}
\int_{{\mathbb{R}}^{3}}\left\langle u,e^{\frac{\epsilon \rho }{2}}L\left(
e^{-\frac{\epsilon \rho }{2}}u\right) \right\rangle _{\xi }dx& \leq -\mu
\int_{{\mathbb{R}}^{3}}|\left\langle \xi \right\rangle ^{\frac{\gamma }{2}}%
\mathrm{P}_{1}u|_{L_{\xi }^{2}}^{2}dx \\
& +C_{1}\epsilon \left[ \int_{H_{+}}\left[ \delta \left( \left\langle
x\right\rangle -Mt\right) \right] ^{\frac{\gamma -1}{p+1-\gamma }}\left\vert
\mathrm{P}_{0}u\right\vert ^{2}d\xi dx+\int_{H_{0}\cup H_{-}}\left\vert
\mathrm{P}_{0}u\right\vert ^{2}d\xi dx\right] ,
\end{align*}%
for some constants $\mu >0\ $and $C_{1}>0.$ One can easily check that
\begin{align*}
\partial _{t}\rho & =-\delta M\left\langle \xi \right\rangle ^{\gamma
-1}\left( \frac{5p}{p+1-\gamma }\left[ \left( \delta \left( \left\langle
x\right\rangle -Mt\right) \right) \left\langle \xi \right\rangle ^{\gamma
-p-1}\right] ^{\frac{\gamma -1}{p+1-\gamma }}\left( 1-\chi \right) +\chi
(1-\chi )\right) \\
& \quad +\delta M\left( 5\left[ \left( \delta \left( \left\langle
x\right\rangle -Mt\right) \right) \left\langle \xi \right\rangle ^{\gamma
-p-1}\right] ^{\frac{p}{p+1-\gamma }}-(1-2\chi )\left[ \left( \delta \left(
\left\langle x\right\rangle -Mt\right) \right) \left\langle \xi
\right\rangle ^{\gamma -p-1}\right] -3\right) \left\langle \xi \right\rangle
^{\gamma -1}\chi ^{\prime }\,\leq 0
\end{align*}%
(the constants $5$ and $3$ are chosen intentionally such that the quantity
in the latter bracket is nonnegative on $H_{0}$), and
\begin{align*}
\nabla _{x}\rho & =\delta \left( \nabla _{x}\left\langle x\right\rangle
\right) \left\langle \xi \right\rangle ^{\gamma -1}\left( \frac{5p}{%
p+1-\gamma }\left[ \left( \delta \left( \left\langle x\right\rangle
-Mt\right) \right) \left\langle \xi \right\rangle ^{\gamma -p-1}\right] ^{%
\frac{\gamma -1}{p+1-\gamma }}\left( 1-\chi \right) +\chi (1-\chi )\right) \\
& \quad -\delta \left( \nabla _{x}\left\langle x\right\rangle \right) \left(
5\left[ \left( \delta \left( \left\langle x\right\rangle -Mt\right) \right)
\left\langle \xi \right\rangle ^{\gamma -p-1}\right] ^{\frac{p}{p+1-\gamma }%
}-(1-2\chi )\left[ \left( \delta \left( \left\langle x\right\rangle
-Mt\right) \right) \left\langle \xi \right\rangle ^{\gamma -p-1}\right]
-3\right) \left\langle \xi \right\rangle ^{\gamma -1}\chi ^{\prime }\,.
\end{align*}%
Hence,
\begin{equation*}
\partial _{t}\rho =\xi \cdot \nabla _{x}\rho =0\quad \text{on}\quad H_{-}\,,
\end{equation*}%
\begin{equation*}
|\partial _{t}\rho |\lesssim \delta M\left\langle \xi \right\rangle ^{\gamma
-1}\quad \text{and}\quad |\xi \cdot \nabla _{x}\rho |\lesssim \delta
\left\langle \xi \right\rangle ^{\gamma }\ \ \text{on}\quad H_{0},
\end{equation*}%
and we have
\begin{equation*}
\partial _{t}\rho =-\frac{5p\delta M}{p+1-\gamma }\left[ \left( \delta
\left( \left\langle x\right\rangle -Mt\right) \right) \right] ^{\frac{\gamma
-1}{p+1-\gamma }},
\end{equation*}%
\begin{equation*}
\xi \cdot \nabla _{x}\rho =\frac{5p\delta }{p+1-\gamma }\frac{\xi \cdot x}{%
\left\langle x\right\rangle }\left[ \left( \delta \left( \left\langle
x\right\rangle -Mt\right) \right) \right] ^{\frac{\gamma -1}{p+1-\gamma }},
\end{equation*}%
on $H_{+}.$ Direct calculation together with the Cauchy inequality show that
\begin{equation*}
\epsilon \left\vert \int_{{\mathbb{R}}^{3}}\left\langle u,\left( \xi \cdot
\nabla _{x}\rho \right) u\right\rangle _{\xi }dx\right\vert \leq
C_{2}\epsilon \delta \left[
\begin{array}{l}
\int_{{\mathbb{R}}^{3}}|\left\langle \xi \right\rangle ^{\frac{\gamma }{2}}%
\mathrm{P}_{1}u|_{L_{\xi }^{2}}^{2}dx \\
+\int_{H_{+}}\left[ \left( \delta \left( \left\langle x\right\rangle
-Mt\right) \right) \right] ^{\frac{\gamma -1}{p+1-\gamma }}|\mathrm{P}%
_{0}u|^{2}d\xi dx+\int_{H_{0}}|\mathrm{P}_{0}u|^{2}d\xi dx%
\end{array}%
\right] ,
\end{equation*}%
and
\begin{align*}
\epsilon \int_{{\mathbb{R}}^{3}}\left\langle u,\left( \partial _{t}\rho
\right) u\right\rangle _{\xi }dx& \leq \epsilon \delta MC_{3}\int_{{\mathbb{R%
}}^{3}}|\left\langle \xi \right\rangle ^{\frac{\gamma }{2}}\mathrm{P}%
_{1}u|_{L_{\xi }^{2}}^{2}dx \\
& -\epsilon \delta MC_{4}\int_{H_{+}}\left[ \left( \delta \left(
\left\langle x\right\rangle -Mt\right) \right) \right] ^{\frac{\gamma -1}{%
p+1-\gamma }}|\mathrm{P}_{0}u|^{2}d\xi dx+\epsilon \delta MC_{5}\int_{H_{0}}|%
\mathrm{P}_{0}u|^{2}d\xi dx\,.
\end{align*}%
In conclusion, we get
\begin{align*}
& \quad \frac{1}{2}\frac{d}{dt}\int_{{\mathbb{R}}^{3}}\left\langle
u,u\right\rangle _{\xi }dx-\int_{{\mathbb{R}}^{3}}\left\langle
u,wKh^{(6)}\right\rangle _{\xi }dx \\
& \leq -(\mu -\epsilon \delta C_{2}-\epsilon \delta MC_{3})\int_{{\mathbb{R}}%
^{3}}|\left\langle \xi \right\rangle ^{\frac{\gamma }{2}}\mathrm{P}%
_{1}u|_{L_{\xi }^{2}}^{2}dx \\
& -\epsilon \left( \delta MC_{4}-\delta C_{2}-C_{1}\right) \int_{H_{+}}\left[
\delta (\left\langle x\right\rangle -Mt)\right] ^{\frac{r-1}{p+1-\gamma }}|%
\mathrm{P}_{0}u|^{2}d\xi dx \\
& \quad +\epsilon (\delta C_{2}+\delta MC_{5})\int_{H_{0}}|\mathrm{P}%
_{0}u|^{2}d\xi dx+\epsilon C_{1}\int_{H_{-}}|\mathrm{P}_{0}u|^{2}d\xi dx.
\end{align*}%
Choosing $\delta $, $\epsilon >0$ small and $M>0$ large enough, we have
\begin{equation*}
\frac{d}{dt}\Vert u\Vert _{L^{2}}^{2}\lesssim \Vert u\Vert _{L^{2}}\Vert
wKh^{(6)}\Vert _{L^{2}}+\Vert \mathrm{P}_{0}u\Vert _{L^{2}}^{2}\,\lesssim
\Vert u\Vert _{L^{2}}\Vert wKh^{(6)}\Vert _{L^{2}}+\Vert u\Vert
_{L^{2}}\,\Vert \mathcal{R}^{(6)}\Vert _{L^{2}}.
\end{equation*}%
Moreover, since $\partial _{t}\rho \leq 0$, the weight $w$ is decreasing in $%
t,$ so that%
\begin{equation*}
\Vert wKh^{(6)}\Vert _{L^{2}}\leq \Vert \mu ^{1/2}Kh^{(6)}\Vert
_{L^{2}}=\Vert Kh^{(6)}\Vert _{L^{2}\left( \mu \right) }\lesssim \Vert
h^{(6)}\Vert _{L^{2}\left( \mu \right) },
\end{equation*}%
due to $\left( \ref{K-3}\right) .$ It implies
\begin{equation*}
\frac{d}{dt}\Vert u\Vert _{L^{2}}\lesssim \Vert h^{(6)}\Vert _{L^{2}\left(
\mu \right) }+\Vert \mathcal{R}^{(6)}\Vert _{L^{2}}.
\end{equation*}%
For the $x$-derivative estimate, we only need to control the commutator
terms:
\begin{equation}
\epsilon \int_{{\mathbb{R}}^{3}}\left\langle \partial _{x_{i}}u,\partial
_{x_{i}}\left( \partial _{t}\rho +\xi \cdot \nabla _{x}\rho \right)
u\right\rangle _{\xi }dx,  \label{comm1}
\end{equation}%
\begin{equation}
\epsilon \int_{{\mathbb{R}}^{3}}\left\langle \partial _{x_{i}}u,e^{\frac{%
\epsilon \rho }{2}}K\left( \partial _{x_{i}}\rho e^{-\frac{\epsilon \rho }{2}%
}u\right) \right\rangle _{\xi }dx\,,  \label{comm2}
\end{equation}%
\begin{equation}
\epsilon \int_{{\mathbb{R}}^{3}}\left\langle \partial _{x_{i}}u,\partial
_{x_{i}}\rho e^{\frac{\epsilon \rho }{2}}Ke^{-\frac{\epsilon \rho }{2}%
}u\right\rangle _{\xi }dx=\epsilon \int_{{\mathbb{R}}^{3}}\left\langle
\partial _{x_{i}}\rho \partial _{x_{i}}u,e^{\frac{\epsilon \rho }{2}}Ke^{-%
\frac{\epsilon \rho }{2}}u\right\rangle _{\xi }dx,  \label{comm3}
\end{equation}%
and
\begin{equation}
\int_{{\mathbb{R}}^{3}}\left\langle \partial _{x_{i}}u,\partial
_{x_{i}}\left( wKh^{(6)}\right) \right\rangle _{\xi }dx.  \label{comm4}
\end{equation}%
It is obvious that the decay of $\partial _{x_{i}}\left( \partial _{t}\rho
+\xi \cdot \nabla _{x}\rho \right) $ is faster than $\left( \partial
_{t}\rho +\xi \cdot \nabla _{x}\rho \right) $, hence the first term (\ref%
{comm1}) is easy to control. From $\left( \ref{Weighted-K-2}\right) $ and
the fact that $\left\vert \partial _{x_{i}}\rho \right\vert \lesssim
\left\langle \xi \right\rangle ^{\gamma -1},$ it follows that for $0\leq
\gamma <1,$
\begin{eqnarray*}
\epsilon \left\vert \int_{{\mathbb{R}}^{3}}\left\langle \partial
_{x_{i}}u,e^{\frac{\epsilon \rho }{2}}K\left( \partial _{x_{i}}\rho e^{-%
\frac{\epsilon \rho }{2}}u\right) \right\rangle _{\xi }dx\right\vert
&\lesssim &\epsilon \left( \left\Vert \partial _{x_{i}}u\right\Vert
_{L^{2}}^{2}+\left\Vert \left\langle \xi \right\rangle ^{\frac{\gamma }{2}}%
\mathrm{P}_{1}u\right\Vert _{L^{2}}^{2}+\left\Vert \left\langle \xi
\right\rangle ^{\frac{\gamma }{2}}\mathrm{P}_{0}u\right\Vert
_{L^{2}}^{2}\right) \\
&\lesssim &\epsilon \left( \left\Vert \mathrm{P}_{0}\partial
_{x_{i}}u\right\Vert _{L^{2}}^{2}+\left\Vert \left\langle \xi \right\rangle
^{\frac{\gamma }{2}}\mathrm{P}_{1}u\right\Vert _{L^{2}}^{2}+\left\Vert
\mathrm{P}_{0}u\right\Vert _{L^{2}}^{2}\right) ,
\end{eqnarray*}%
\begin{eqnarray*}
\epsilon \left\vert \int_{{\mathbb{R}}^{3}}\left\langle \partial
_{x_{i}}\rho \partial _{x_{i}}u,e^{\frac{\epsilon \rho }{2}}Ke^{-\frac{%
\epsilon \rho }{2}}u\right\rangle _{\xi }dx\right\vert &\lesssim &\epsilon
\left( \left\Vert \left\langle \xi \right\rangle ^{\frac{\gamma }{2}}\mathrm{%
P}_{1}\partial _{x_{i}}u\right\Vert _{L^{2}}^{2}+\left\Vert \left\langle \xi
\right\rangle ^{\frac{\gamma }{2}}\mathrm{P}_{0}\partial
_{x_{i}}u\right\Vert _{L^{2}}^{2}+\left\Vert u\right\Vert _{L^{2}}^{2}\right)
\\
&\lesssim &\epsilon \left( \left\Vert \left\langle \xi \right\rangle ^{\frac{%
\gamma }{2}}\mathrm{P}_{1}\partial _{x_{i}}u\right\Vert
_{L^{2}}^{2}+\left\Vert \mathrm{P}_{0}\partial _{x_{i}}u\right\Vert
_{L^{2}}^{2}+\left\Vert \left\langle \xi \right\rangle ^{\frac{\gamma }{2}}%
\mathrm{P}_{1}u\right\Vert _{L^{2}}^{2}+\left\Vert \mathrm{P}%
_{0}u\right\Vert _{L^{2}}^{2}\right) ,
\end{eqnarray*}%
and that for $-2<\gamma <1,$%
\begin{eqnarray*}
\epsilon \left\vert \int_{{\mathbb{R}}^{3}}\left\langle \partial
_{x_{i}}u,e^{\frac{\epsilon \rho }{2}}K\left( \partial _{x_{i}}\rho e^{-%
\frac{\epsilon \rho }{2}}u\right) \right\rangle _{\xi }dx\right\vert
&\lesssim &\epsilon \left( \left\Vert \partial _{x_{i}}u\right\Vert
_{L_{\sigma }^{2}}^{2}+\left\Vert \left\langle \xi \right\rangle ^{\frac{%
\gamma }{2}}\mathrm{P}_{1}u\right\Vert _{L_{\sigma }^{2}}^{2}+\left\Vert
\left\langle \xi \right\rangle ^{\frac{\gamma }{2}}\mathrm{P}%
_{0}u\right\Vert _{L_{\sigma }^{2}}^{2}\right) \\
&\lesssim &\epsilon \left( \left\Vert \mathrm{P}_{0}\partial
_{x_{i}}u\right\Vert _{L^{2}}^{2}+\left\Vert \left\langle \xi \right\rangle
^{\frac{\gamma }{2}}\mathrm{P}_{1}u\right\Vert _{L^{2}}^{2}+\left\Vert
\mathrm{P}_{0}u\right\Vert _{L^{2}}^{2}\right) ,
\end{eqnarray*}%
\begin{eqnarray*}
\epsilon \left\vert \int_{{\mathbb{R}}^{3}}\left\langle \partial
_{x_{i}}\rho \partial _{x_{i}}u,e^{\frac{\epsilon \rho }{2}}Ke^{-\frac{%
\epsilon \rho }{2}}u\right\rangle _{\xi }dx\right\vert &\lesssim &\epsilon
\left( \left\Vert \left\langle \xi \right\rangle ^{\frac{\gamma }{2}}\mathrm{%
P}_{1}\partial _{x_{i}}u\right\Vert _{L_{\sigma }^{2}}^{2}+\left\Vert
\left\langle \xi \right\rangle ^{\frac{\gamma }{2}}\mathrm{P}_{0}\partial
_{x_{i}}u\right\Vert _{L_{\sigma }^{2}}^{2}+\left\Vert u\right\Vert
_{L_{\sigma }^{2}}^{2}\right) \\
&\lesssim &\epsilon \left( \left\Vert \left\langle \xi \right\rangle ^{\frac{%
\gamma }{2}}\mathrm{P}_{1}\partial _{x_{i}}u\right\Vert
_{L^{2}}^{2}+\left\Vert \mathrm{P}_{0}\partial _{x_{i}}u\right\Vert
_{L^{2}}^{2}+\left\Vert \left\langle \xi \right\rangle ^{\frac{\gamma }{2}}%
\mathrm{P}_{1}u\right\Vert _{L^{2}}^{2}+\left\Vert \mathrm{P}%
_{0}u\right\Vert _{L^{2}}^{2}\right) .
\end{eqnarray*}%
On the other hand,%
\begin{equation*}
\left\vert \int_{{\mathbb{R}}^{3}}\left\langle \partial _{x_{i}}u,\partial
_{x_{i}}\left( wKh^{(6)}\right) \right\rangle _{\xi }dx\right\vert \lesssim
\left\Vert \partial _{x_{i}}u\right\Vert _{L^{2}}\left\Vert \partial
_{x_{i}}\left( wKh^{(6)}\right) \right\Vert _{L^{2}}.
\end{equation*}%
The second derivative estimate is similar and hence we omit the details. We
then deduce that
\begin{align*}
\frac{d}{dt}\Vert u\Vert _{H_{x}^{2}L_{\xi }^{2}}^{2}& \lesssim \Vert u\Vert
_{H_{x}^{2}L_{\xi }^{2}}\Vert wKh^{(6)}\Vert _{H_{x}^{2}L_{\xi }^{2}}+\Vert
\mathrm{P}_{0}u\Vert _{H_{x}^{2}L_{\xi }^{2}} \\
& \lesssim \Vert u\Vert _{H_{x}^{2}L_{\xi }^{2}}\Vert wKh^{(6)}\Vert
_{H_{x}^{2}L_{\xi }^{2}}+\Vert u\Vert _{H_{x}^{2}L_{\xi }^{2}}\Vert \mathcal{%
R}^{(6)}\Vert _{H_{x}^{2}L_{\xi }^{2}} \\
& \lesssim \Vert u\Vert _{H_{x}^{2}L_{\xi }^{2}}\left( \Vert h^{(6)}\Vert
_{H_{x}^{2}L_{\xi }^{2}\left( \mu \right) }+\Vert \mathcal{R}^{(6)}\Vert
_{H_{x}^{2}L_{\xi }^{2}}\right) ,
\end{align*}%
the last inequality holds since $\left\vert \partial _{x_{i}}\rho
\right\vert \lesssim \left\langle \xi \right\rangle ^{\gamma -1}$ and the
weight $w$ is decreasing in $t.$ It follows that
\begin{equation*}
\frac{d}{dt}\Vert u\Vert _{H_{x}^{2}L_{\xi }^{2}}\lesssim \Vert h^{(6)}\Vert
_{H_{x}^{2}L_{\xi }^{2}\left( \mu \right) }+\Vert \mathcal{R}^{(6)}\Vert
_{H_{x}^{2}L_{\xi }^{2}}\,.
\end{equation*}%
In view of Lemma \ref{Regularity} and (\ref{R^(6)-1}),
\begin{equation*}
\frac{d}{dt}\Vert u\Vert _{H_{x}^{2}L_{\xi }^{2}}\lesssim \left\{
\begin{array}{ll}
\{t^{4}\wedge 1\}\left(\Vert f_{0}\Vert _{L^{2}(\mu )}+\Vert f_{0}\Vert
_{L^{2}}\right),%
\vspace{3mm}
& 0\leq \gamma <1, \\
t^{4}\left( 1+t\right) ^{3}\left( \Vert f_{0}\Vert _{L^{2}(\mu )}+\Vert
f_{0}\Vert _{L^{2}}\right) , & -2<\gamma <0.%
\end{array}%
\right.
\end{equation*}%
This completes the proof of the proposition.
\end{proof}


Through Proposition \ref{weighted-remainder} and the Sobolev inequality, we
will establish the pointwise estimate for $\mathcal{R}^{(6)}$ in the
following. Combining this with the wave part $W^{(6)}$ (see Lemma \ref%
{pointwise-h}), we complete the wave structure of the solution outside the
finite Mach number region.

\begin{proposition}
Let $\mathcal{R}^{(6)}$ be the remainder part of the linearized Boltzmann
equation \eqref{bot.1.d} with $-2<\gamma <1,$ and $0<p\leq 2.\ $There exists
a positive constant $M$ such that for $\left\langle x\right\rangle >2Mt$, we
have
\begin{equation}
\left\vert \mathcal{R}^{(6)}(t,x,\cdot )\right\vert _{L_{\xi }^{2}}\leq
Ct^{5}e^{-c_{\epsilon }(\left\langle x\right\rangle +t)^{\frac{p}{p+1-\gamma
}}} \left\Vert f_{0}\right\Vert _{L^{2}\left( e^{\epsilon \left\vert \xi
\right\vert ^{p}}\right) }\,,
\end{equation}%
where the constant $\epsilon >0$ is sufficiently small and $C,c_{\epsilon }$
are some positive constants.
\end{proposition}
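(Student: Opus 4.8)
The plan is to read off the pointwise bound from the weighted energy estimate of Proposition \ref{weighted-remainder} together with the Sobolev embedding, once I show that in the region $\langle x\rangle>2Mt$ the weight $w$ is bounded below, uniformly in $\xi$, by $\exp\bigl(c_\epsilon(\langle x\rangle+t)^{p/(p+1-\gamma)}\bigr)$.

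First I would set $u=w\mathcal{R}^{(6)}$ and apply the (vector-valued) Sobolev inequality $H^2_xL^2_\xi\hookrightarrow L^\infty_xL^2_\xi$ together with Proposition \ref{weighted-remainder} to get $\sup_x|u(t,x,\cdot)|_{L^2_\xi}\lesssim\|w\mathcal{R}^{(6)}\|_{H^2_xL^2_\xi}\lesssim t^5\|f_0\|_{L^2(\mu)}$ for $0\le\gamma<1$ and $\lesssim t^5(1+t)^3\|f_0\|_{L^2(\mu)}$ for $-2<\gamma<0$. Since $f_0$ vanishes for $|x|\ge1$, on its support the $x$-dependent part of the exponent $\theta$ in $\mu$ is bounded (here one uses $\gamma-1<0$), so $\|f_0\|_{L^2(\mu)}\lesssim\|f_0\|_{L^2(e^{\epsilon|\xi|^p})}$ after relabelling $\epsilon$; and in the soft case the extra factor $(1+t)^3$ will be absorbed at the end into half of the exponential decay, since $(\langle x\rangle+t)^{p/(p+1-\gamma)}\ge(1+t)^{p/(p+1-\gamma)}$ grows faster than $\log(1+t)$.

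Next I would establish the lower bound for $w$. Fix $(t,x)$ with $\langle x\rangle>2Mt$; then $\langle x\rangle-Mt\ge\langle x\rangle/2$, and since $t<\langle x\rangle/(2M)$ also $\langle x\rangle-Mt\gtrsim\langle x\rangle+t$. I would split $\xi$-space into $H_+$ and $H_0\cup H_-=\{\delta(\langle x\rangle-Mt)\le2\langle\xi\rangle^{p+1-\gamma}\}$. On $H_+$ the cutoff $\chi\bigl(\delta(\langle x\rangle-Mt)\langle\xi\rangle^{\gamma-p-1}\bigr)$ vanishes, so $\rho(t,x,\xi)=5(\delta(\langle x\rangle-Mt))^{p/(p+1-\gamma)}\gtrsim(\langle x\rangle+t)^{p/(p+1-\gamma)}$. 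On $H_0\cup H_-$ one checks from the definition of $\rho$ that $\rho(t,x,\xi)\gtrsim\langle\xi\rangle^p$ (on $H_-$ one has exactly $\rho=3\langle\xi\rangle^p$; on $H_0$, where $\delta(\langle x\rangle-Mt)\approx\langle\xi\rangle^{p+1-\gamma}$, the combination $5(\delta(\langle x\rangle-Mt))^{p/(p+1-\gamma)}(1-\chi)+3\langle\xi\rangle^p\chi\gtrsim\langle\xi\rangle^p$, the particular choice of the constants $5$ and $3$ being exactly what makes this work), and there $\langle\xi\rangle^p=(\langle\xi\rangle^{p+1-\gamma})^{p/(p+1-\gamma)}\gtrsim(\delta(\langle x\rangle-Mt))^{p/(p+1-\gamma)}\gtrsim(\langle x\rangle+t)^{p/(p+1-\gamma)}$. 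Hence $\rho(t,x,\xi)\ge c_0(\langle x\rangle+t)^{p/(p+1-\gamma)}$ for all $\xi$, so $w(t,x,\xi)=e^{\epsilon\rho/2}\ge e^{c_\epsilon(\langle x\rangle+t)^{p/(p+1-\gamma)}}$ with $c_\epsilon=\epsilon c_0/2$.

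To conclude, for $\langle x\rangle>2Mt$ I would write
\[
\bigl|\mathcal{R}^{(6)}(t,x,\cdot)\bigr|_{L^2_\xi}^2=\int_{\mathbb{R}^3}|u(t,x,\xi)|^2\,w(t,x,\xi)^{-2}\,d\xi\le e^{-2c_\epsilon(\langle x\rangle+t)^{p/(p+1-\gamma)}}\bigl|u(t,x,\cdot)\bigr|_{L^2_\xi}^2,
\]
insert the bound on $|u|_{L^2_\xi}$ from the first step, and absorb the factor $(1+t)^3$ (soft case) into half of the exponential as indicated, obtaining $|\mathcal{R}^{(6)}(t,x,\cdot)|_{L^2_\xi}\le Ct^5e^{-c_\epsilon(\langle x\rangle+t)^{p/(p+1-\gamma)}}\|f_0\|_{L^2(e^{\epsilon|\xi|^p})}$. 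The genuinely delicate point is the uniform-in-$\xi$ lower bound on $\rho$: the weight is patched from an $x$-growing piece and a $\xi$-growing piece through the cutoff, and one must verify it stays large on all of $H_+$, $H_0$, $H_-$; on $H_-$ the smallness of the $x$-piece is precisely compensated by the forced largeness $\langle\xi\rangle^{p+1-\gamma}\gtrsim\langle x\rangle-Mt$, and it is the matching of exponents there that produces the space decay rate $p/(p+1-\gamma)$. Everything else — the Sobolev step and the passage from $\|f_0\|_{L^2(\mu)}$ to $\|f_0\|_{L^2(e^{\epsilon|\xi|^p})}$ via compact support of $f_0$ in $x$ — is routine.
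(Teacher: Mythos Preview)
Your proposal is correct and follows essentially the same route as the paper: apply the Sobolev embedding to the weighted energy bound of Proposition~\ref{weighted-remainder}, use the compact support of $f_0$ to reduce $\|f_0\|_{L^2(\mu)}$ to $\|f_0\|_{L^2(e^{\epsilon|\xi|^p})}$, and in the region $\langle x\rangle>2Mt$ exploit the uniform-in-$\xi$ lower bound $\rho(t,x,\xi)\gtrsim(\delta(\langle x\rangle-Mt))^{p/(p+1-\gamma)}\gtrsim(\langle x\rangle+t)^{p/(p+1-\gamma)}$. In fact you supply more detail than the paper does on this last point (the paper simply asserts the lower bound on $\rho$), and you correctly note that the extra $(1+t)^3$ in the soft case is absorbed into the exponential; one small remark is that the constants $5$ and $3$ in $\rho$ are chosen in the paper primarily to make $\partial_t\rho\le0$ in the energy estimate rather than for the lower bound you cite, though any positive constants would yield $\rho\gtrsim\langle\xi\rangle^p$ on $H_0$.
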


\begin{proof}
Let $w$ be the weight function defined as \eqref{weight}. Observe that for $%
\left\langle x\right\rangle >2Mt$,
\begin{equation*}
\rho (t,x,\xi )\gtrsim \left( \delta (\left\langle x\right\rangle
-Mt)\right) ^{\frac{p}{p+1-\gamma }}.
\end{equation*}%
Applying Proposition \ref{weighted-remainder}, it follows from the Sobolev
inequality \cite[Proposition 3.8]{[Taylor]} that%
\begin{eqnarray*}
e^{\epsilon \left( \delta (\left\langle x\right\rangle -Mt)\right) ^{\frac{p%
}{p+1-\gamma }}}\left\vert \mathcal{R}^{(6)}(t,x,\cdot )\right\vert _{L_{\xi
}^{2}} &\leq &\left\vert w\mathcal{R}^{(6)}\right\vert _{L_{\xi }^{2}}\leq
\left\Vert w\mathcal{R}^{(6)}\right\Vert _{L_{\xi }^{2}L_{x}^{\infty }} \\
&\lesssim &\left\Vert \nabla _{x}^{2}\left( w\mathcal{R}^{(6)}\right)
\right\Vert _{L^{2}}^{1/2}\left\Vert \nabla _{x}\left( w\mathcal{R}%
^{(6)}\right) \right\Vert _{L^{2}}^{1/2}\lesssim \left\Vert w\mathcal{R}%
^{(6)}\right\Vert _{H_{x}^{2}L_{\xi }^{2}} \\
&\lesssim &\left\{
\begin{array}{ll}
\{t^{5}\wedge t\}\left\Vert f_{0}\right\Vert _{L^{2}(\mu )},%
\vspace {3mm}
& 0\leq \gamma <1, \\
t^{5}\left( 1+t\right) ^{3}\left\Vert f_{0}\right\Vert _{L^{2}(\mu )}, &
-2<\gamma <0.%
\end{array}%
\right.
\end{eqnarray*}%
Here $\epsilon >0$ can be chosen as small as we want. Note that for $%
\left\langle x\right\rangle >2Mt$,
\begin{equation*}
\left\langle x\right\rangle -Mt>\frac{\left\langle x\right\rangle }{3}+\frac{%
Mt}{3},
\end{equation*}%
and
\begin{equation*}
\left\Vert f_{0}\right\Vert _{L^{2}(\mu )}\lesssim \Vert f_{0}\Vert
_{L^{2}(e^{\epsilon |\xi |^{p}})},
\end{equation*}%
due to the fact that $f_{0}$ has compact support in variable $x.$ Therefore
there exist positive constants $C$ and $c_{\epsilon }$ such that
\begin{equation}
\left\vert \mathcal{R}^{(6)}(t,x,\cdot )\right\vert _{L_{\xi }^{2}}\leq
Ct^{5}e^{-c_{\epsilon }(\left\langle x\right\rangle +t)^{\frac{p}{p+1-\gamma
}}}\Vert f_{0}\Vert _{L^{2}(e^{\epsilon |\xi |^{p}})}.  \label{remainder}
\end{equation}
\end{proof}

\end{document}